\newtheorem{theorem}{Theorem}[section]
\newtheorem{corollary}[theorem]{Corollary}
\newtheorem{proposition}[theorem]{Proposition}
\theoremstyle{definition}
\newtheorem{definition}[theorem]{Definition}
\newtheorem{remark}[theorem]{Remark}
\newtheorem*{notation}{Notation}
\newcommand{\OO}{\mathscr{O}}
\newcommand{\Q}{\mathcal{Q}^{0}}
\newcommand{\q}{\mathfrak{q}}
\newcommand{\slas}{/\!\!/ }
\newcommand{\GG}{\mathbb{G}_{m}}
\newcommand{\G}{\mathscr{G}}
\newcommand{\Pp}{\mathbb P}
\newcommand{\N}{\mathscr{N}}
\newcommand{\Lcc}{\mathscr L}
\newcommand{\K}{\mathscr{K}}
\newcommand{\Hh}{\mathscr{M}}
\titleformat{\section}[display]{\scshape \bfseries}{\S \thesection\filcenter}{1ex}{\fillast}
\titleformat{\subsection}[hang]{\itshape\bfseries}{\thesubsection. --- }{0pt}{\upshape}
\titleformat{\subsubsection}[runin]{\itshape\filleft}{\thesubsubsection.---}{0pt}{}
\newcommand{\eq}[1][r]
   {\ar@<-3pt>@{-}[#1]
    \ar@<-1pt>@{}[#1]|<{}="gauche"
    \ar@<+0pt>@{}[#1]|-{}="milieu"
    \ar@<+1pt>@{}[#1]|>{}="droite"
    \ar@/^2pt/@{-}"gauche";"milieu"
    \ar@/_2pt/@{-}"milieu";"droite"}
\date{}
\title{
\textbf{Generalized parabolic structures over smooth curves with many components and principal bundles over reducible nodal curves}
\author{
{\footnotesize \'ANGEL LUIS MU\~NOZ CASTA\~NEDA} 
\footnote{Department of Mathematics, University of Le\'on, Spain, email: {\texttt amunc@unileon.es}}
}
}
\begin{document}

\maketitle

\begin{abstract}
Let $Y_1,\hdots,Y_l$ be smooth irreducible projective curves and let $Y$ be its disjoint union. Given a semisimple linear algebraic group $G$ and a faithful representation $\rho:G\hookrightarrow \textrm{SL}(V)$ we construct a projective moduli space of $(\underline{\kappa},\delta)$-(semi)stable singular principal $G$-bundles with  generalized parabolic structure of type $\underline{e}$. In case $Y$ is the normalization of a connected and reducible projective nodal curve $X$, there is a closed subscheme 
coarsely representing the subfunctor corresponding to descending bundles. We prove that the descent operation induces a birational, surjective and proper morphism onto the schematic closure of
the space of $\delta$-stable singular principal $G$-bundles whose associated torsion free sheaf is of local type $\underline{e}$.
\end{abstract}

\vspace{1cm}

{\textsl {\small Keywords:}} {\small principal bundles; generalized parabolic structures; reducible nodal curves.}

{\textsl {\small 2010 MSC: 14D22; 14H60; 14L24.} }

\tableofcontents

\section{Introduction}

Let $X$ be a smooth projective curve over the field of complex numbers $\mathbb{C}$, $\mathscr{E}$ a locally free sheaf on $X$ and $p\in X$ a closed point. A parabolic structure on $\mathscr{E}$ at $p$ is just a flag of vector spaces $(0)\subset E_{1}\subset\cdots\subset E_{s}\subset \mathscr{E}_{p}/\mathfrak{m}_{p}\mathscr{E}_{p}$ together with weights $0\leq \kappa_{1}<\kappa_{1}<\cdots<\kappa_{s}<1$ (weighted flags for short). The study of  parabolic locally free sheaves began with the seminal work of V. B. Mehta and C. S. Seshadri \cite{ms}. They defined a (natural) (semi)stability condition for such objects and proved the existence of a coarse projective moduli space for (semi)stable parabolic locally free sheaves. Furthermore, they proved that the isomorphism classes of parabolic locally free sheaves that are stable coincides with the set of equivalence classes of irreducible unitary representations of the topological fundamental group of $X$ (see \cite[Theorem 4.1]{ms}).

The concept of parabolic locally free sheaf can be generalized by considering weighted flags supported on divisors of the smooth projective curve $X$. These objects are called generalized parabolic locally free sheaves and they where introduced by U. Bohsle in \cite{genparbund}. The importance of generalized parabolic locally free sheaves is not only the possible link to the space of representations of the topological fundamental groups but also the link to the geometry of the moduli spaces of torsion free sheaves on nodal curves. To be more precise, U. Bohsle proved that if $\pi:Y\rightarrow X$ is the normalization map of a reducible projective nodal curve then there exists a coarse projective moduli space for generalized parabolic locally free sheaves (the parabolic structure being supported on $q_1+q_2=\pi^{-1}(p)$) on $Y$ together with a morphism to the moduli space of torsion free sheaves on $X$  of rank $r$ and degree $d$ making the former moduli space a desingularization of the later provided $(r,d)=1$ (see \cite{bosle-multiple}).

Likewise, generalized parabolic structures have been applied for studying the geometry of the moduli space of Hitchin pairs over a reducible curve. In  \cite{hitchinpairs}, U. Bhosle constructs a morphism between the moduli space of Hitchin pairs with generalized parabolic structure over the normalization  $Y$ and the moduli space of Hitchin pairs over the reduced curve $X$, showing that under certain condition this is a birrational morphism whose image contains all stable Higgs bundles.

These ideas have also been applied to the more general problem of studying the compactification of the moduli space of principal $G$-bundles over an irreducible nodal curve. In \cite{Alexander2}, A. Schmitt 
realized that, once a faithful representation $\rho:G\hookrightarrow \textrm{SL}(V)$ is fixed, every principal  $G$-bundle can be seen as a pair $(\mathscr{E},\tau)$ formed by a locally free sheaf $\mathscr{E}$ and a non-trivial morphism of algebras $\tau:S^{\bullet}(V\otimes\mathscr{E})^{G}\rightarrow \mathscr{O}_{X}$. These objects are called singular principal $G$-bundles and they carry a semistability condition, which depends (a priori) on a positive rational parameter $\delta\in\mathbb{Q}_{>0}$. Then, the main result is that there exists a coarse projective moduli space for $\delta$-(semi)stable singular principal $G$-bundles and it coincides with the classical moduli space provided $\delta$ is large enough. This motivated the works \cite{bosle,Alexander4,Alexander1}, where U. Bohsle generalized the definition of singular principal $G$-bundles, as well as the $\delta$-(semi)stability condition, over an irreducible nodal curve in a natural way and proved the existence of a projective moduli space for them, while A. Schmitt studied the asymptotic behavior of the $\delta$-(semi)stability condition obtaining a similar result as that of the smooth case. The study of the asymptotic behavior of the $\delta$-(semi)stability condition becomes harder when the curve has singularities, and it was carried out in \cite{Alexander4,Alexander1} by considering  singular principal $G$-bundles on $X$ as singular principal $G$-bundles with generalized parabolic structures on the normalization $Y$. Therefore, the moduli spaces of  singular principal $G$-bundles with generalized parabolic structures over a smooth projective curve play an important role in this problem.

On the other hand, singular principal $G$-bundles with generalized parabolic structures have been applied to the construction of a compactification of the moduli space of principal Higgs $G$-bundles over an irreducible nodal curve (see \cite{higgs} for instance). In this case, A. Lo Giudice and A. Pustetto enlarge the category of principal Higgs $G$-bundles on the nodal curve to the category of singular principal $G$-bundles together with a Higgs field, which can be seen as singular principal $G$-bundles with generalized parabolic structure together with a Higgs field on the normalization of the nodal curve. Again, the moduli space of the last objects plays an important rol in the study of the moduli space of the first objects.

\subsection*{Goal of the paper}
Let $X$ be a projective nodal curve with nodes $x_{1},\hdots,x_{\nu}$ and $l$ irreducible components, and $\pi\colon Y=\coprod_{i=1}^{l} Y_{i}\rightarrow X$ its normalization. We fix an ample invertible sheaf $\mathscr{O}_{X}(1)$ on $X$ and we denote by $\mathscr{O}_{Y}(1)$ the ample invertible sheaf obtained by pulling $\mathscr{O}_{X}(1)$ back to $Y$. We denote by $h$ the degree of $\mathscr{O}_{Y}(1)$, by $y_{1}^{i},y_{2}^{i}$ the points in the preimage of the $i$th nodal point $x_{i}$, by $D_{i}=y_{1}^{i}+y_{2}^{i}$ the corresponding divisor on $Y$ and by $D=\sum D_{i}$ the total divisor. Let $G$ be a semisimple linear algebraic group, $\rho:G\hookrightarrow \textrm{SL}(V)$ a faithful representation of dimension $r\in\mathbb{N}$, $\delta\in\mathbb{Q}_{>0}$ and $d\in\mathbb{Z}$. Let $\textrm{SPB}(\rho)^{\delta-(s)s}_{r,d}$ be the moduli space of $\delta$-(semi)stable singular principal $G$-bundles of rank $r$ and degree $d$ over $X$ (see \cite{AMC}). Consider the set $J(r)=\{(e_1,\hdots,e_{\nu})\in\mathbb{N}^{\nu}| \ 1\leq e_{i}\leq r\}$. Then, there is a stratification, $\textrm{SPB}(\rho)^{\delta-(s)s}_{r,d}:=\bigcup_{\underline{e}\in J(r)}\textrm{SPB}(\rho)_{r,d,\underline{e}}^{\delta\textrm{-(s)s}}$, where $\textrm{SPB}(\rho)_{r,d,\underline{e}}^{\delta\textrm{-(s)s}}$ parametrizes singular principal bundles, $(\mathscr{F},\tau)$, with $\mathscr{F}_{x_{i}}\simeq \mathscr{O}_{X,x_{i}}^{e_{i}}\oplus \mathfrak{m}_{x_{i}}^{r-e_{i}}$.
The goal of this paper is to construct a coarse projective moduli space, $\textrm{D}(\rho)_{r,d(\underline{e},r),\underline{e}}^{(\underline{\kappa},\delta)\textrm{-(s)s}}$, for $(\underline{\kappa},\delta)$-(semi)stable descending singular principal $G$-bundles with generalized parabolic structures over $Y=\coprod_{i=1}^{l} Y_{i}$ of given type $\underline{e}$ supported on the divisors $D_{i}$ (see Theorem \ref{D}) together with a morphism (see Equation \ref{finalfinal})
\begin{equation*}
\Theta: \textrm{D}(\rho)_{r,d}^{(\underline{\kappa},\delta)\textrm{-(s)s}}:=  \coprod_{\underline{e}\in J(r)}\textrm{D}(\rho)_{r,d(\underline{e},r),\underline{e}}^{(\underline{\kappa},\delta)\textrm{-(s)s}}\longrightarrow \textrm{SPB}(\rho)_{r,d}^{\delta\textrm{-(s)s}}.
\end{equation*}
We show that he restriction to each component $\Theta_{\underline{e}}: \textrm{D}(\rho)_{r,d(\underline{e},r),\underline{e}}^{(\underline{\kappa},\delta)\textrm{-(s)s}}\longrightarrow \bigcup_{\underline{e'}\leq\underline{e}}\textrm{SPB}(\rho)_{r,d,\underline{e'}}^{\delta\textrm{-(s)s}}$
induces an isomorphism between a (functorialy well defined) dense open subscheme of the stable locus $\mathcal{W}_{\underline{e}}\subset \textrm{D}(\rho)_{r,d(\underline{e},r),\underline{e}}^{(\underline{\kappa},\delta)\textrm{-s}}$ and $\textrm{SPB}(\rho)_{r,d,\underline{e}}^{\delta\textrm{-s}}$ (see Theorem \ref{equivmoduli}). Therefore, $\Theta_{\underline{e}}$ induces a birational surjective and proper morphism $\textrm{D}(\rho)_{r,d(\underline{e},r),\underline{e}}^{(\underline{\kappa},\delta)\textrm{-(s)s}}\twoheadrightarrow \overline{\textrm{SPB}(\rho)_{r,d,\underline{e}}^{\delta\textrm{-s}}}$ when the stable locus is dense inside $\textrm{D}(\rho)_{r,d(\underline{e},r),\underline{e}}^{(\underline{\kappa},\delta)\textrm{-(s)s}}$.

\subsection*{Outline of the paper}
In Section \ref{sectionprelim} we introduce the basic definitions of generalized parabolic swamps and generalized parabolic singular principal $G$-bundles of given type, as well as the semistability conditions. In Section \ref{sectionswamps} we prove the existence of a coarse projective moduli space for generalized parabolic $(\underline{\kappa},\delta)$-(semi)stable swamps of given type. 
The main difficulty here is to find the linearized projective embedding that makes the semistability condition to coincide with the Hilbert-Mumford semistability.
In Section \ref{sectionspb} we prove the existence of a coarse projective moduli space for $(\underline{\kappa},\delta)$-(semi)stable singular principal $G$-bundles. By \cite[Theorem 5.5]{AMC}, this is a direct consequence of the results proved in Section \ref{sectionswamps}. In Section \ref{applicationsection}, we construct the coarse moduli space for descending singular principal bundles over the normalization, as well as the morphism $\Theta$ that relates it with the closure of the stable locus of the moduli space of singular principal bundles over the nodal curve.

\section{Preliminaires}\label{sectionprelim}

Let $Y=\coprod_{i=1}^{l}Y_{i}$ be a disjoint union of smooth projective and irreducible curves, $j_{i}:Y_{i}\hookrightarrow Y$ the natural embedding of the $i$th component, $\mathscr{O}_{Y}(1)$ an ample invertible sheaf and $\mathscr{O}_{Y_{1}}(1)=j_{i}^{*}\mathscr{O}_{Y}(1)$ the restriction of $\mathscr{O}_{Y}(1)$ to the component $Y_{i}$. Set $h:=\textrm{deg}(\mathscr{O}_{Y})$ and $h_{i}:=\textrm{deg}(\mathscr{O}_{Y_{i}})$.  Given a coherent sheaf on $Y$, we know that $\mathscr{E}=\bigoplus_{i=1}^{l}j_{i*}(\mathscr{E}|_{i})$, where $\mathscr{E}_{i}:=\mathscr{E}|_{Y_{i}}$. The multirank of $\mathscr{E}$ is defined as the tuple $(r_1,\hdots,r_l)$ (where $r_i=\textrm{rk}(\mathscr{E}_{i})$) while the multidegree is defined as $(d_1,\hdots,d_l)$ (where $d_i=\textrm{deg}(\mathscr{E}_{i})$). If $r\in\mathbb{N}$ and $\textrm{rk}(\mathscr{E}_{i})=r$ for all $i$ (we will say the rank is equal to $r$), then $P_{\mathscr{E}}(n)=\alpha n+r\chi(Y)+d$, where $\alpha=hr$ and $d=\sum_{i=1}^{l}d_{i}$.

\subsection{Generalized parabolic structures}

\begin{definition}\label{genpardef}
Let $r\in\mathbb{N}$, $d\in\mathbb{Z}$ and $\underline{e}:=(e_1,\hdots,e_{\nu})\in\mathbb{N}^{\nu}$ with $e_i\leq r$.
A generalized parabolic locally free sheaf of rank $r$, degree $d$ and type $\underline{e}$ over  $Y$ is a tuple $(\mathscr{E},q_{1},\hdots,q_{\nu})$ where $\mathscr{E}$ is a locally free sheaf of rank $r$  and degree $d$, and $q_{i}$ is a quotient of dimension $e_i$,
$\mathscr{E}(y_{1}^{i})\oplus\mathscr{E}(y_{2}^{i})\twoheadrightarrow R_{i}$,
$\mathscr{E}(y_{j}^{i})$ being the fibre of $\mathscr{E}$ over $y_{j}^{i}$. 
\end{definition}
In order to abreviate the notation we will use the symbol $\underline{q}$ to refer to the tuple $(q_{1},\hdots,q_{\nu})$.
Denote by $R:=\oplus R_{i}$ the total vector space. Since the supports of the divisors $D_{i}$ are disjoint we have $\Gamma(D,\mathscr{E}|_{D})=\bigoplus \Gamma(D_{i},\mathscr{E}|_{D_{i}})= \bigoplus ( \mathscr{E}(y_{1}^{i})\oplus\mathscr{E}(y_{2}^{i}))$. From this, we can form the quotient
$q: =\oplus q_{i}\colon\Gamma(D,\mathscr{E}|_{D})\rightarrow R\rightarrow 0$.

\begin{definition}
Let $(\mathscr{E},\underline{q})$ and $(\mathscr{E}',\underline{q}')$ be generalized parabolic locally free sheaves on $Y$. A homomorphism between them is a tuple $(f,u_{1},\hdots,u_{\nu})$ where $f\colon \mathscr{E}\rightarrow\mathscr{E}'$ is a homomorphism of $\mathscr{O}_{Y}$-modules and $u_{i}\colon R_{i}\rightarrow R_{i}'$ is a homomorphism of vector spaces such that $q'_{i}\circ(f(y_{1}^{i})\oplus f(y_{2}^{i}))=u_{i}\circ  q_{i}$, where $f(y)$ denotes de induced linear map between the fibers at $y\in Y$.
\end{definition}
\begin{notation}
Given a tuple of natural numbers $(e_1,\hdots,e_{\nu})\in\mathbb{N}^{\nu}$, we will denote by 
$I(\underline{e})$ the set $\{i\in\{1,\hdots,\nu\} \textrm{ such that }e_{i}\neq 0\}$
of multitindices of non zero components.
\end{notation}
\begin{definition}\label{kappadeltasemi}
Let $r\in\mathbb{N}$, $d\in\mathbb{Z}$ and $\underline{e}:=(e_1,\hdots,e_{\nu})\in\mathbb{N}^{\nu}$ with $e_i\leq r$. For each $i\in I(\underline{e})$, fix  $\kappa_{i}\in(0,\dfrac{e_{i}}{r})\cap\mathbb{Q}$. Let $(\mathscr{E},\underline{q})$ be a generalized parabolic locally free sheaf of rank $r$, degree $d$ and type $\underline{e}$. We define the $\underline{\kappa}$-parabolic degree for any subsheaf $\mathscr{F}\subseteq\mathscr{E}$ as
\begin{equation*}
\underline{\kappa}\textrm{-}\textrm{pardeg}(\mathscr{F}):= \textrm{deg}(\mathscr{F})-\sum_{i\in I(\underline{e})} \kappa_{i}\dfrac{r}{e_i} \ \textrm{dim} \ q_{i}(\mathscr{F}(y_{1}^{i})\oplus\mathscr{F}(y_{2}^{i}))
\end{equation*}
\end{definition}
\begin{remark}
Formally, we can take as $\kappa_{i}$ any rational number. Taking $\kappa_{i}=\dfrac{e_i}{r}$ we recover the definition given in \cite{Alexander4}. On the other hand, tanking $e_{i}=r$ we recover the definition given in \cite{Alexander1}. Thus, both are particular cases of the one considered in this work. 
\end{remark}

\subsection{Swamps with generalized parabolic structures}

Let $r\in\mathbb{N}$, $d\in\mathbb{Z}$ and $\underline{e}:=(e_1,\hdots,e_{\nu})\in\mathbb{N}^{\nu}$ with $e_i\leq r$.
Fix non negative integers $a,b,c$ and an invertible sheaf $\Lcc$ on $Y$. 
\begin{definition}
A  swamp with generalized parabolic structure of type $(a,b,c,\Lcc,\underline{e})$ rank $r$ and degree $d$ is a triple $(\mathscr{E},\underline{q},\phi)$ where $(\mathscr{E},\underline{q})$ is a generalized parabolic locally free sheaf of rank $r$, degree $d$ and type $\underline{e}$, and $\phi\colon (\mathscr{E}^{\otimes a})^{\oplus b}\rightarrow \textrm{det}(\mathscr{E})^{\otimes c}\otimes \Lcc$ is a non-zero morphism.  
\end{definition}
\begin{notation}
In order to be shorter, we will denote the tuple $(a,b,c,\Lcc,\underline{e})$ that defines the type of a generalized parabolic swamp by the symbol $\mathfrak{tp}$.
\end{notation}

Let $\phi\colon (\mathscr{E}^{\otimes a})^{\oplus b}\rightarrow \textrm{det}(\mathscr{E})^{\otimes c} \otimes\mathscr{L}$ be a swamp on $Y$ and let $(\mathscr{E}_{\bullet},\underline{m})$ be a weighted filtration. For each $\mathscr{E}_{i}$ denote by $\alpha_{i}$ its multiplicity and by $\alpha$ the multiplicity of $\mathscr{E}$. Define the vector 
$\Gamma:=\sum_{1}^{t}m_{i}\Gamma^{(\alpha_{i})}$,
where $\Gamma^{(l)}=(l-\alpha,\overset{\times l}{\hdots},l-\alpha,l,\overset{\times \alpha-l}{\hdots},l)$. Let us denote by $J$ the set
$\{\textrm{multi-indices }I=(i_{1},\hdots,i_{a})|I_{j}\in\{1,\hdots,t+1\}\}$.
Define
\begin{equation*}
\begin{split}
\mu(\mathscr{E}_{\bullet},\underline{m},\phi)&:= - \textrm{min}_{I\in J}\{\Gamma_{\alpha_{i_{1}}}+\hdots +\Gamma_{\alpha_{i_{a}}}|\phi|_{(\mathscr{E}_{i_{1}}\otimes \hdots\otimes \mathscr{E}_{i_{a}})^{\oplus b}}\neq 0\},\\
P_{\underline{\kappa}}(\mathscr{E}_{\bullet},\underline{m})&:=\sum_{i=1}^{s}m_{i}(\underline{\kappa}\textrm{-}\textrm{pardeg}(\mathscr{E})\alpha_{i}-\underline{\kappa}\textrm{-}\textrm{pardeg}(\mathscr{E}_{i})\alpha).
\end{split}
\end{equation*}

\begin{definition}
Let  $\delta\in\mathbb{Q}_{>0}$. For each $i\in I(\underline{e})$, fix  $\kappa_{i}\in(0,\dfrac{e_{i}}{r})\cap\mathbb{Q}$.
A generalized parabolic swamp  $(\mathscr{E},\underline{q},\phi)$ of rank $r$ degree $d$ and type $\mathfrak{tp}=(a,b,c,\Lcc,\underline{e})$ is 
$(\underline{\kappa},\delta)$-(semi)stable if for every weighted filtration $(\mathscr{E}_{\bullet},\underline{m})$ of $\mathscr{E}$, the inequality
$P_{\underline{\kappa}}(\mathscr{E}_{\bullet},\underline{m})+\delta\mu(\mathscr{E}_{\bullet},\underline{m},\phi)(\geq)0$
holds true.
\end{definition}
\begin{remark} \label{finiteset}
Observe that there is a positive integer $A$, depending only on the numerical input data, $r,a,b,c$ and $\mathscr{L}$, such that it is enough to check the $\delta$-semistability condition for weighted filtrations with $m_{i}<A$.
This follows from \cite[Lemma 1.4]{sols} changing ranks by multiplicities.
\end{remark}

Let $S$ be a scheme. Set $S_{D_{i}}:=S\times D_{i}\subset S\times Y$ and let $\pi_{S_i}:S\times D_{i}\rightarrow S$ be the projection onto the first factor. A family of generalized parabolic locally free sheaves parametrized by $S$ is a tuple $(\mathscr{E}_{S},\underline{q}_{S})$ where $\mathscr{E}_{S}$ is a family of locally free sheaves on $Y$ parametrized by $S$ of rank $r$ and degree $d$, and $\underline{q}_{S}=(q_{S1},\hdots,q_{S\nu})$,  $q_{Si}\colon \pi_{Si*}(\mathscr{E}_{S}|_{S_{D_{i}}})\rightarrow R_{i}\rightarrow 0$ being a quotient locally free sheaf of rank $e_{i}$ on $S$.
A family of generalized parabolic swamps is a quadruple $(\mathscr{E}_{S},\underline{q}_{S},\N_{S},\phi_{S})$ where $(\mathscr{E}_{S},\underline{q}_{S})$ is a family of generalized parabolic locally free sheaves of rank $r$ and degree $d$, $\N_{S}$ is an invertible sheaf on $S$, and  $\phi_{S}\colon (\mathscr{E}_{S}^{\otimes a})^{\oplus b}\rightarrow \textrm{det}(\mathscr{E}_{S})^{\otimes c}\otimes \pi_{Y}^{*}\Lcc\otimes\pi_{S}^{*}\N_{S}$ is a morphism of locally free sheaves on $S\times Y$ such that $\phi_{S}|_{\{s\}\times Y}$ is non-zero for all $s\in S$.
Finally, $(\underline{\kappa},\delta)$-(semi)stable families are families which are $(\underline{\kappa},\delta)$-(semi)stable fiberwise.
Then, one can introduce the moduli problem defined by the functor
\begin{equation*}\label{ratuno}
\textbf{SGPS}^{(\underline{\kappa},\delta)-(s)s}_{r,d,\mathfrak{tp}}(S)=\left\{  \begin{array}{l}  \textrm{isomorphism classes of families of} \\ (\underline{\kappa},\delta)\textrm{-(semi)stable generalized parabolic} \\ \textrm{swamps } (\mathscr{E}_{S},\underline{q}_{S},\N_{S},\phi_{S}) \textrm{ parametrized}\\ \textrm{by }S\textrm{ with rank }r, \textrm{ degree }d \textrm{ and type }\mathfrak{tp}
 \end{array}  \right\}.
\end{equation*}

\subsection{Singular principal $G$-bundles with generalized parabolic structures}

Let $G$ be a semisimple linear algebraic group and let $\rho:G\hookrightarrow \textrm{SL}(V)$ be a faithful representation.

\begin{definition}
A singular principal $G$-bundle over $Y$ is a pair $(\mathscr{E},\tau)$ where $\mathscr{E}$ is a locally free sheaf and $\tau:S^{\bullet}(V\otimes  \mathscr{E})^{G}\rightarrow \mathscr{O}_{Y}$ is a non-trivial morphism of $\mathscr{O}_{Y}$-algebras.
\end{definition}

\begin{definition}\label{defspbgps}
Let $r\in\mathbb{N}$, $d\in\mathbb{Z}$ and $\underline{e}:=(e_1,\hdots,e_{\nu})\in\mathbb{N}^{\nu}$ with $e_i\leq r$.
A singular principal $G$-bundle with a generalized parabolic structure over $Y$ of rank $r$, degree $d$ and type $\underline{e}$ is a triple $(\mathscr{E},\tau,\underline{q})$ where $(\mathscr{E},\underline{q})$ is a generalized parabolic locally free sheaf of rank $r$, degree $d$ and type $\underline{e}$, and $(\mathscr{E},\tau)$  is a singular principal $G$-bundle.
\end{definition}

\begin{definition}
Let $(\mathscr{E},\tau,\underline{q})$ and $(\G,\lambda,\underline{p})$ be singular principal $G$-bundles with generalized parabolic structure on $Y$. A morphism between them is a morphism of $\mathscr{O}_{Y}$-modules $f:\mathscr{F}\rightarrow\G$ compatible with both structures. The isomorphisms are the obvious ones.
\end{definition}

Following \cite[Theorem 5.5]{AMC}, we can assign to any singular principal $G$-bundle a swamp of type $(a,b,0,\mathscr{O}_{Y})$ for certain naural numbers $a,b$ that depends only on the numerical input data, 
\begin{equation}\label{swamp-bundle}
\begin{split}
\left\{   \begin{array}{l}  \textrm{isomorphism classes} \\ \textrm{of singular principal} \\ G\textrm{-bundles}  \end{array} \right\} &\rightarrow \left\{   \begin{array}{l}  \textrm{isomorphism classes} \\ \textrm{of swamps} \\ \textrm{of type }(a, b,0, \mathscr{O}_{Y})   \end{array} \right\}, \\
(\mathscr{E},\tau,\underline{q})&\mapsto (V\otimes \mathscr{E},\varphi_{\tau},\underline{q})
\end{split}
\end{equation}
this map being injective. Thus, we can define, for any weighted filtration $(\mathscr{E}_{\bullet},\underline{m})$, the semistability function $\mu(\mathscr{E}_{\bullet},\underline{m},\tau)$ as $\mu(\mathscr{E}_{\bullet},\underline{m},\varphi_{\tau})$ (see \cite[Definition 6.1]{AMC}).
\begin{definition}\label{gpsstable}

Let $r\in\mathbb{N}$, $d\in\mathbb{Z}$, $\underline{e}:=(e_1,\hdots,e_{\nu})\in\mathbb{N}^{\nu}$ with $e_i\leq r$, and $\delta\in\mathbb{Q}_{>0}$. For each $i\in I(\underline{e})$, fix  $\kappa_{i}\in(0,\dfrac{e_{i}}{r})\cap\mathbb{Q}$.
A generalized parabolic singular principal $G$-bundle of rank $r$ degree $d$ and type $\underline{e}$,  $(\mathscr{E},\underline{q},\tau)$, is $(\underline{\kappa},\delta)$-(semi)stable if for every weigted filtration $(\mathscr{E}_{\bullet},\underline{m})$ of $\mathscr{E}$, the inequality
$P_{\underline{\kappa}}(\mathscr{E}_{\bullet},\underline{m})+\delta\mu(\mathscr{E}_{\bullet},\underline{m},\tau)(\geq)0$
holds true.
\end{definition}
Then, one can define a family as in the case of swamps and introduce the moduli problem defined by the functor
\begin{equation}\label{modulifunctornormalization}
\textbf{SPBGPS}(\rho)_{r,d,\underline{e}}^{(\underline{\kappa},\delta)\textrm{-(s)s}}(S)=\left\{   \begin{array}{l}\textrm{isomorphism classes of families}\\ \textrm{of } (\underline{\kappa},\delta)\textrm{-(semi)stable} \textrm{ singular} \\ 
\textrm{principal }G\textrm{-bundles with} \\ 
\textrm{generalized parabolic structure} \\ 
\textrm{on $Y$ parametrized } \textrm{by $S$ with}\\
\textrm{rank $r$ degree }d \textrm{ and type }\underline{e}
\end{array}  \right\}.
\end{equation}

\subsection{Some calculations in geometric invariant theory}\label{gitcalculations}

Recall that a basis $\underline{u}:=(u_1,\hdots,u_p)$ of the vector space $U$, together with a vector $\underline{\gamma}=(\gamma_{1},\hdots, \gamma_{p})\in\mathbb{N}^{p}$ such that $\gamma_{1}\leq\hdots\leq \gamma_{p}$ and $\sum_{i=1}^{p}\gamma_{i}=0$, defines a one parameter subgroup $\lambda(\underline{u},\underline{\gamma}):\mathbb{C}\rightarrow \textrm{SL}(U)$. Conversely, every one parameter subgroup of $\textrm{SL}(U)$ arises in this way (see \cite[Example 1.5.1.12]{Asch}).
Furthermore, every one parameter subgroup of  $\textrm{SL}(U)$ determines a weighted flag $(U_{\bullet},\underline{m})$ of $U$ and every weighted flag arises in this way as well. It turns out that the Hilbert-Mumford function, $\mu(-,\lambda)$ depends only on the associated weighted flag of $\lambda$ and not on $\lambda$ itself (see \cite[Proposition 1.5.1.35, Example 1.5.1.36]{Asch}). 

We derive the explicit expression of the Hilbert-Mumford criterion (see \cite[Theorem 2.1, Proposition 2.3]{mum-git}) in some situations that will be important for our proposes. Similar calculations can be found along \cite{Asch}, so we will skip some details.

\subsubsection{Example 1}

Let $p, r$ be integers such that $1\leq e\leq p-1$. Let $\G r:=\textrm{Grass}_{e}(U^{\oplus 2})$ be the Grassmannian of $e$-dimensional quotients of $U^{\oplus 2}$, $U$ being a $p$-dimensional vector space, and let $N$ be positive integer. The Grassmannian can be embedded into the projective space through the Pl\"ucker embedding $\iota\colon  \G r \hookrightarrow \textbf{P}(\wedge^{e}U^{\oplus 2})$.
The group $\textrm{SL}(U)$ acts on both spaces through the diagonal $\delta:\textrm{SL}(U)\hookrightarrow \textrm{SL}(U^{\oplus 2})$ in the obvious way, and $\iota$ is $\textrm{SL}(U)$-equivariant. If $\mathscr{O}(1)$ is the tautological invertible sheaf on $\textbf{P}(\wedge^{e}U^{\oplus 2})$, then $\Lcc:=\iota^{*}\mathscr{O}(1)$ is a $\textrm{SL}(U)$-linearized very ample invertible sheaf. Let us compute the semistability function of points in $\G r$ with respect to $\Lcc$. 

Let $\{u_{1},\hdots,u_{p}\}$ be a basis of the vector space $U$. Then, a basis of $\wedge^{e}U^{\oplus 2}$ is given by the vectors 
$
u_{I,J}:=(u_{i_{1}},0)\wedge\hdots\wedge(u_{i_{l}},0)\wedge(0,u_{j_{1}})\wedge\hdots\wedge(0,u_{j_{e-l}}).
$
Let $\lambda\colon \GG\rightarrow \textrm{SL}(U)$ be a one parameter subgroup. Fix a basis $\underline{u}=\{u_{1},\hdots,u_{p}\}$ and integers $\gamma_{1}\leq\hdots\leq\gamma_{p}$ such that $\lambda=\lambda(\underline{u},\underline{\gamma})$. Then, 
we have
\begin{equation*}
\mu^{\Lcc}(\tau,\lambda(\underline{u},\underline{\gamma}))=\sum_{i=1}^{s}i \ \textrm{dim} ( \textrm{Ker}(\tau))-p \ \textrm{dim}(\textrm{Ker}(\tau)\cap(U_{i}\oplus U_{i}))
=\sum_{i=1}^{s} p \ \textrm{dim}\tau(U_{i}\oplus U_{i})-i e,
\end{equation*}
where $(U_{\bullet},\underline{m})$ is the weighted filtration associated to $\lambda$.

\subsubsection{Example 2}\label{ex2}

Let $Y_{1},\hdots, Y_{l}$ be smooth projective connected curves, and consider their disjoint union, $Y:=\bigsqcup Y_i$. Let $\N_{1},\hdots, \N_{l}$ be invertible sheaves on $Y_{1},\hdots, Y_{l}$ respectively and denote  by $\N:=\bigoplus \N_{i}$ the corresponding invertible sheaf on $Y$. Let $r, \ n\in\mathbb{N}$ and let $U$ be a vector space of dimension $p>r$. Consider now, for each $i$, the projective space given by
$
\mathbb{G}_{1,\N}^{i}:=\mathbf{P}(\textrm{Hom}(\bigwedge^{r}U,H^{0}(Y_{i},\N_{i}(rn)))^{\vee}),
$
and define $\mathbb{G}_{1,\N}=\mathbb{G}_{1,\N}^{1}\times\hdots\times\mathbb{G}_{1,\N}^{l}$. Let $b_{1},\hdots, b_{l}\in\mathbb{N}$ and consider the very ample invertible sheaf on $\mathbb{G}_{1}$ given by
$\Lcc:=\pi_{1}^{*}\mathscr{O}_{\mathbb{G}_{1,\N}^{1}}(b_{1})\otimes\hdots \otimes \pi_{l}^{*}\mathscr{O}_{\mathbb{G}_{1,\N}^{l}}(b_{l})$
with the obvious $\textrm{SL}(U)$-linearization. For the sake of clarity, we will use the symbol $\Lcc_{i}$ to denote the invertible sheaf $\mathscr{O}_{\mathbb{G}_{1,\N}^{i}}(1)$. 
Clearly
$
\mu^{\Lcc}([g],\lambda)=\sum_{i=1}^{l}b_{i}\mu^{\pi_{i}^{*}\Lcc_{i}}([g],\lambda)=\sum_{i=1}^{l}b_{i}\mu^{\Lcc_{i}}([g_{i}],\lambda),
$
$[g_{i}]$ being the $i$-th component of $[g]$. Therefore the calculation of the semistability function of points of $\mathbb{G}_{1,\N}$ with respect to $\Lcc$ is reduced to the calculation of the semistability function of points of $\mathbb{G}_{1,\N}^{i}$ with respect to $\Lcc_{i}$.
Let $\mathscr{E}$ be a  locally free quotient sheaf of rank $r$
$q\colon U\otimes\mathscr{O}_{Y}(-n)\rightarrow\mathscr{E}\rightarrow 0$
whose determinant is isomorphic to $\mathscr{N}$. Restricting to the $i$-th component, twisting by $n$, taking the $r$-th exterior power and taking global sections we find the morphism
$
H^{0}(\wedge^{r}(q_{i}(n)))\colon \wedge^{r}U\rightarrow H^{0}(Y,\mathscr{N}_{i}(rn)),
$
whose equivalence class defines a point $[H^{0}(\wedge^{r}(q_{i}(n)))]\in\mathbb{G}_{1,\N}^{i}$. 
Now, a short calculation shows that
\begin{equation*}
\mu^{\Lcc_{j}}( [H^{0}(\wedge^{r}(q_{j}(n)))] ,\lambda)=\sum_{i=1}^{s}m_{i}(\textrm{rk}(\mathscr{E}_{i}|_{Y_{j}})p-r \textrm{dim}(U_{i})),
\end{equation*}
$(U_{i},m_{i})$ being the $i$th term of the weighted filtration associate to $\lambda$ and $\mathscr{E}_{i}|_{Y_{j}}$ the restriction to $Y_{j}$ of the saturated subsheaf generated by $U_{i}$.

\subsubsection{Example 3}
Consider the same situation as in Example 2. Let $\Lcc$ be an invertible sheaf on $Y$, $U$ a $p$-dimensional vector space and $a,b,c, n\in\mathbb{N}$. Given an invertible sheaf $\mathscr{N}$ on $Y$ we define the projective space
$\mathbb{G}_{2,\mathscr{N}}= \mathbf{P}(\textrm{Hom}(U_{a,b},H^{0}(Y,\mathscr{N}^{\otimes c}\otimes\Lcc (na)))^{\vee}),$
where $U_{a,b}:=(U^{\otimes a})^{\oplus b}$.
Consider the pair $(q,\phi)$ given by a  locally free  quotient sheaf of rank $r$,  $q\colon U\otimes\mathscr{O}_{Y}(-n)\rightarrow\mathscr{E}$, whose determinant is isomorphic to $\mathscr{N}$ and a morphism $\phi\colon (\mathscr{E}^{\otimes a})^{\oplus b}\rightarrow\mathscr{N}^{\otimes c}\otimes\Lcc$. Let $\Delta\colon U_{a,b}\hookrightarrow U_{a,b}^{\oplus l}$ be the diagonal linear map, and consider the morphism 
$H^{0}((q(n)^{\otimes a})^{\oplus b})\circ\Delta\colon U_{a,b}\rightarrow H^{0}(Y,(\mathscr{E}^{\otimes a})^{\oplus b}\otimes\mathscr{O}_{Y}(na)).$
Twisting $\phi$ by $\mathscr{O}_{Y}(na)$, we get
$H^{0}(\phi(na)):H^{0}(Y,(\mathscr{E}^{\otimes a})^{\oplus b}\otimes\mathscr{O}_{Y}(na))\rightarrow H^{0}(Y,\mathscr{N}^{\otimes c}\otimes\Lcc(na)).$
Composing both morphisms we get a point in $\mathbb{G}_{2,\mathscr{N}}$,
\begin{equation}\label{uf}
[H^{0}(\phi(na))\circ H^{0}((q(n)^{\otimes a})^{\oplus b})\circ\Delta]\colon U_{a,b}\rightarrow H^{0}(Y,\mathscr{N}^{\otimes c}\otimes\Lcc(na))]\in \mathbb{G}_{2,\mathscr{N}}.
\end{equation}
Set $p=\textrm{dim}(U)$ and let $\underline{u}=(u_{1},\hdots,u_{p})$ be a basis of $U$. For any multiindex $I=(i_{1},\hdots,i_{a})$ with $i_{j}\in\{1,\hdots,p\}$ define $u_{I}:=u_{i_{1}}\otimes\hdots\otimes u_{i_{a}}$ and $u_{I}^{k}:=(0,\hdots,0,\overset{k)}{u_{I}},0,\hdots,0).$
Then the elements $u_{I}^{k}$ form a basis of $U_{a,b}$ and the group $\textrm{SL}(U)$ acts on $\mathbb{G}_{2,\mathscr{N}}$ in the obvious way.
We want to compute the semistability function for points $T\in\mathbb{G}_{2,\N}$ of the form (\ref{uf}) with respect to the natural $\textrm{SL}(U)$-linearization of $\mathscr{O}_{\mathbb{G}_{2,\mathscr{N}}}(1)$.
Let $\lambda\colon \GG \rightarrow \textrm{SL}(U)$ be a one parameter subgroup. Then there exists a basis $u_{1},\hdots,u_{p}$ of $U$ and integers $\gamma_{1}\leq\hdots\leq\gamma_{p}$ with $\sum\gamma_{i}=0$ such that
$\lambda(z)u_{i}=z^{\gamma_{i}}u_{i},\forall z\in\GG$.
For any multiindex $I=(i_{1},\hdots,i_{a})$ consider $u_{I}$ and define $\gamma_{I}=\gamma_{i_{1}}+\cdots +\gamma_{i_{a}}$. Then $\lambda\colon \GG \rightarrow \textrm{SL}(U)$ acts by
$\lambda(z)\bullet u_{I}^{k}=z^{\gamma_{I}}\bullet u_{I}^{k}, \forall z\in \GG$,
and we have $\mu([T],\lambda)=-\textrm{min}\{\gamma_{I}|T(u_{I}^{k})\neq 0\}$.
Given a multiindex $I=(i_{1},\hdots,i_{a})$ we want to compute $\gamma_{I}=\gamma_{i_{1}}+\cdots+\gamma_{i_{a}}$ 
for $\gamma=(i-p,\hdots,i-p,i,\hdots,i)$. Set $\nu(I,i):=\#\{j|i_{j}\leq i\}$. Then $i_{1},\hdots,i_{\nu(I,i)}\leq i$ and $i_{\nu(I,i)+1},\hdots,i_{a}>i$, so
$\gamma_{I}=(i-p)\nu(I,i)+i(a-\nu(I,i))=ia-\nu(I,i)p$.
A short calculation shows
\begin{equation*}
\mu([T],\lambda)=\sum_{i=1}^{s}m_{i}(\nu(I,\textrm{dim} U_{i})p-\textrm{dim} U_{i}a),
\end{equation*}
$(U_{\bullet},\underline{m})$ being the weighted flag associated to $\lambda$  and $I=(i_{1},\hdots,i_{a})$ is the multiindex giving the minimum of the semistability function.

\section{Moduli space for generalized parabolic swamps}\label{sectionswamps}

Let $r\in\mathbb{N}$, $d\in\mathbb{Z}$, $\underline{e}:=(e_1,\hdots,e_{\nu})\in\mathbb{N}^{\nu}$ with $e_i\leq r$ and $\delta\in\mathbb{Q}_{>0}$. For each $i\in I(\underline{e})$ fix $\kappa_{i}\in(0,\dfrac{e_{i}}{r})\cap\mathbb{Q}$. Fix non negative integers $a,b,c$ and an invertible sheaf $\Lcc$ on $Y$. Recall that $h:=\textrm{deg}(\mathscr{O}_{Y}(1))$ and $h_{i}:=\textrm{deg}(\mathscr{O}_{Y}(1)_{|_{Y_{i}}})$.
It will be assumed that these data are fixed once and for all along this section.

The main result of this section is Theorem \ref{theorem E} which shows the existence of a coarse projective moduli space for $(\underline{\kappa},\delta)$-(semi)stable swamps with generalized parabolic structure of given type $\mathfrak{tp}=(a,b,c,\Lcc,\underline{e})$ and with rank and degree equal to $r$ and $d$ respectively. 
In order to do so, we have to consider the rigidified functor
\begin{equation}\label{rigfunctorswamps}
^{rig}\textbf{SGPS}^{n}_{r,d,\mathfrak{tp}}(S)=\left\{   \begin{array}{l} \textrm{isomorphism classes of tuples }(\mathscr{E}_{S},\underline{q}_{S},\phi_{S},g_{S}) \\
\textrm{where } (\mathscr{E}_{S},\phi_{S}) \textrm{ is a family of swamps} \\ 
 \textrm{parametrized by }S\textrm{ with rank $r$ and degree $d$}  \\
(\mathscr{E}_{S},\underline{q}_{S}) \textrm{ is a family of generalized parabolic}  \\
\textrm{locally free sheaves }\textrm{ and }g_{S}:U\otimes\mathscr{O}_{S}\rightarrow\pi_{S*}\mathscr{E}_{S}(n)\\
\textrm{is a morphism such that the induced morphism } \\
U\otimes\mathscr{O}_{Y\times S}(-n)\rightarrow\mathscr{E}_{S}\textrm{ is surjective }
 \end{array}\right\},
\end{equation}
where $n\in\mathbb{N}$, $U:=\mathbb{C}^{P(n)}$, $P(n)=\alpha n+r\chi(\mathscr{O}_{Y})+d$ and $\alpha=hr$.

\subsection{Boundedness for generalized parabolic swamps}

Let us denote by $E_{d,r}$ the family of locally free sheaves on $Y$ of rank $r$ and degree $d$. 
Recall that a family of sheaves $E\subset E_{d,r}$ on $Y$ is bounded if and only if there is a natural number $n_{0}\in\mathbb{N}$ such that for all $n\geq n_{0}$ and all locally free sheaves $\mathscr{E}\in E$, $h^{1}(Y,\mathscr{E}(n))=0$ and $\mathscr{E}(n)$ is globally generated.
Boundedness for locally free sheaves appearing in $(\underline{\kappa},\delta)$-(semi)stable swamps with generalized parabolic structures (Proposition \ref{boundparabolicSwamp2}) 
will follow from the next observation.

Let $\mathscr{E}$ be a locally free sheaf over $Y$ and let $(\mathscr{E}_{\bullet},\underline{m})$ be a weighted filtration, with
$\mathscr{E}_{\bullet}\equiv(0)\subset\mathscr{E}_{1}\subset\hdots\subset\mathscr{E}_{s} \subset \mathscr{E}$.
Consider a partition of the multitindex $I:=(1,2,\hdots,s)$,
$I=I_{1}\sqcup I_{2}$,
let us say $I_{1}=(i_{1},\hdots,i_{t})$ and $I_{2}=(k_{1},\hdots,k_{s-t})$. Then, a simple calculation (see \cite[Lemma 1.6]{sols} for the connected case) shows that
\begin{equation}\label{lema bound}
\begin{split}
(\sum_{i=1}^{s}m_{i})a(\alpha-1) &\geq \mu(\mathscr{E}_{\bullet},\underline{m},\phi)\geq-(\sum_{i=1}^{s}m_{i})a (\alpha-1),\\
\mu(\mathscr{E}_{\bullet},\underline{m},\phi) &\geq\mu(\mathscr{E}^{1}_{\bullet},\underline{m_{1}},\phi)-(\sum_{i=1}^{s-t}m_{2,i})a(\alpha-1),
\end{split}
\end{equation}
where $\mathscr{E}^{1}_{j}=\mathscr{E}_{i_{j}}$.
The following results are important direct consequences of Equation (\ref{lema bound}).

\begin{proposition}\label{lemacarra4}
A generalized parabolic swamp $(\mathscr{E},\underline{q},\phi)$ is $(\underline{\kappa},\delta)$-(semi)stable if and only if for any weighted filtration $(\mathscr{E}_{\bullet},\underline{m})$, such that $\emph{par}\mu(\mathscr{E}_{i})\geq \emph{par}\mu(\mathscr{E})-C_{1}$, where $C_{1}=a\delta +r\nu$, the inequality
$P_{\underline{\kappa}}(\mathscr{E}_{\bullet},\underline{m})+\delta\mu(\mathscr{E}_{\bullet} ,\underline{m},\phi)(\geq)0$
holds true.
\end{proposition}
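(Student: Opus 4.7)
One direction is trivial: every weighted filtration satisfying the slope bound $\operatorname{par}\mu(\mathscr{E}_i)\geq\operatorname{par}\mu(\mathscr{E})-C_1$ is in particular a weighted filtration, and the $(\underline{\kappa},\delta)$-(semi)stability of $(\mathscr{E},\underline{q},\phi)$ delivers the inequality.

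For the converse, let $(\mathscr{E}_\bullet,\underline{m})$ be an arbitrary weighted filtration. The idea is to discard the ``bad'' steps of very small parabolic slope while keeping track of the effect on both $P_{\underline{\kappa}}$ and $\mu$. Partition $\{1,\ldots,s\}=I_1\sqcup I_2$ with $I_1:=\{i:\operatorname{par}\mu(\mathscr{E}_i)\geq\operatorname{par}\mu(\mathscr{E})-C_1\}$ and write $(\mathscr{E}^1_\bullet,\underline{m_1})$ for the weighted subfiltration indexed by $I_1$. Since by the hypothesis $P_{\underline{\kappa}}(\mathscr{E}^1_\bullet,\underline{m_1})+\delta\mu(\mathscr{E}^1_\bullet,\underline{m_1},\phi)(\geq)0$, the proposition reduces to establishing
\[
P_{\underline{\kappa}}(\mathscr{E}_\bullet,\underline{m})+\delta\mu(\mathscr{E}_\bullet,\underline{m},\phi)\;\geq\;P_{\underline{\kappa}}(\mathscr{E}^1_\bullet,\underline{m_1})+\delta\mu(\mathscr{E}^1_\bullet,\underline{m_1},\phi).
\]

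Rewriting $P_{\underline{\kappa}}(\mathscr{E}_\bullet,\underline{m})=\sum_i m_i\alpha_i\alpha\,[\operatorname{par}\mu(\mathscr{E})-\operatorname{par}\mu(\mathscr{E}_i)]$ splits $P_{\underline{\kappa}}$ additively along $I_1\sqcup I_2$, and by the definition of $I_2$ its $I_2$-contribution satisfies
$\sum_{i\in I_2}m_i\alpha_i\alpha[\operatorname{par}\mu(\mathscr{E})-\operatorname{par}\mu(\mathscr{E}_i)]\geq C_1\sum_{i\in I_2}m_i\alpha_i\alpha$.
Meanwhile, the second inequality of (\ref{lema bound}) applied to the splitting $I_1\sqcup I_2$ yields
$\delta\mu(\mathscr{E}_\bullet,\underline{m},\phi)\geq\delta\mu(\mathscr{E}^1_\bullet,\underline{m_1},\phi)-\delta a(\alpha-1)\sum_{i\in I_2}m_i$.
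Adding the two estimates, the difference of the two sides of the displayed inequality is bounded below by
$\sum_{i\in I_2}m_i[\alpha_i\alpha C_1-\delta a(\alpha-1)]$, which is non-negative because $\alpha_i\geq 1$ and the choice $C_1=a\delta+r\nu$ forces $\alpha_i\alpha C_1\geq\alpha\cdot a\delta>a\delta(\alpha-1)$.

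\textbf{Main obstacle.} There is no serious difficulty, only the balancing act in the choice of $C_1$: the summand $a\delta$ is essentially dictated by the worst-case bound for $\mu$ coming from (\ref{lema bound}), and the extra $r\nu$ is a safety margin independent of the filtration (one can trace it back to the fact that ordinary degree and $\underline{\kappa}$-parabolic degree differ by at most $r\nu$, since $\kappa_i<1$ and there are $\nu$ divisors with quotient of dimension at most $r$). With this choice, no further surgery on $(\mathscr{E}_\bullet,\underline{m})$ is required: the strict gain of $P_{\underline{\kappa}}$ on the discarded steps swallows the possible loss of $\mu$.
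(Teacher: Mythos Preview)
Your argument is correct and follows the same route as the paper: use the bounds of Equation~(\ref{lema bound}) to show that filtration steps with $\operatorname{par}\mu(\mathscr{E}_i)<\operatorname{par}\mu(\mathscr{E})-C_1$ contribute enough to $P_{\underline{\kappa}}$ to absorb the worst-case negative contribution of $\delta\mu$. The paper's proof is terser---it only writes out the extreme case where \emph{all} steps are ``bad'' ($I_1=\emptyset$)---whereas your partition $I_1\sqcup I_2$ together with the second inequality of~(\ref{lema bound}) handles the mixed case explicitly; this is the full argument the paper leaves implicit.
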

\begin{proof}
Let $(\mathscr{E}_{\bullet},\underline{m})$ be a weighted filtration such that $\textrm{par}\mu(\mathscr{E}_{i})< \textrm{par}\mu(\mathscr{E})-C_{1}$ for all $i$. Since $\underline{\kappa}\textrm{-pardeg}(\mathscr{E}_{i})\alpha-\underline{\kappa}\textrm{-pardeg}(\mathscr{E})\alpha_{i}< -C_{1}\alpha\alpha_{i}$, Equation (\ref{lema bound}) implies $P_{\underline{\kappa}}(\mathscr{E}_{\bullet},\underline{m})+ \delta\mu(\mathscr{E}_{\bullet}\underline{m},\phi)\geq 0$.
\end{proof}

\begin{proposition}\label{boundparabolicSwamp2}

The family of locally free sheaves  of degree $d$ and rank $r$ appearing in
$(\underline{\kappa},\delta)$-(semi)stable swamps with generalized parabolic structure is bounded.
\end{proposition}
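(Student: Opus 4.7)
The plan is to produce a uniform upper bound on $\mu_{\max}(\mathscr{E}|_{Y_{j}})$ on each component $Y_{j}$ and then invoke Grothendieck's boundedness lemma componentwise. Everything reduces to testing the $(\underline{\kappa},\delta)$-semistability condition on two-step weighted filtrations of the form $(0)\subset\mathscr{F}\subset\mathscr{E}$ with $m_{1}=1$, and using the universal estimate for $\mu(\mathscr{E}_{\bullet},\underline{m},\phi)$ supplied by Equation (\ref{lema bound}) to absorb the dependence on the swamp morphism.

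First I would observe that the parabolic correction is uniformly small: for every subsheaf $\mathscr{F}\subseteq \mathscr{E}$,
\[
0\le \sum_{i\in I(\underline{e})}\kappa_{i}\frac{r}{e_{i}}\dim q_{i}(\mathscr{F}(y_{1}^{i})\oplus \mathscr{F}(y_{2}^{i}))\le \sum_{i\in I(\underline{e})}\kappa_{i}\, r\ =:\ K,
\]
so $|\underline{\kappa}\textrm{-pardeg}(\mathscr{F})-\deg(\mathscr{F})|\le K$. Now fix a component $Y_{j}$ and any subsheaf $\mathscr{G}\subseteq \mathscr{E}|_{Y_{j}}$. Its pushforward $\mathscr{F}:=j_{j*}\mathscr{G}\subseteq \mathscr{E}$ has multiplicity $\alpha_{\mathscr{F}}=h_{j}\rk(\mathscr{G})$, and applying semistability to $(0)\subset \mathscr{F}\subset \mathscr{E}$ with weight $1$, combined with the bound $\mu(\mathscr{E}_{\bullet},(1),\phi)\le a(\alpha-1)$ from Equation (\ref{lema bound}), yields
\[
\underline{\kappa}\textrm{-pardeg}(\mathscr{F})\cdot hr\ \le\ \underline{\kappa}\textrm{-pardeg}(\mathscr{E})\cdot h_{j}\rk(\mathscr{G})+\delta\, a(hr-1).
\]
Trading $\underline{\kappa}\textrm{-pardeg}$ for $\deg$ at a cost of $K$ on each side and dividing by $h_{j}\rk(\mathscr{G})$ gives an estimate $\mu(\mathscr{G})\le (d+K)/r+C$, where $C$ depends only on the fixed numerical data $(r,a,\delta,h,h_{j},K)$ and not on the swamp.

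Specialising to $\mathscr{G}=\mathscr{E}|_{Y_{j}}$, and dually to the complementary subsheaf $\bigoplus_{k\neq j}j_{k*}\mathscr{E}|_{Y_{k}}\subset \mathscr{E}$, bounds the partial degree $d_{j}$ from above and below, so the multidegree $(d_{1},\ldots,d_{l})$ ranges over a finite set. For each such multidegree and each $j$, the uniform bound on $\mu_{\max}(\mathscr{E}|_{Y_{j}})$ together with Grothendieck's boundedness lemma on the smooth projective curve $Y_{j}$ gives boundedness of the family $\{\mathscr{E}|_{Y_{j}}\}$, whence of $\{\mathscr{E}=\bigoplus j_{j*}\mathscr{E}|_{Y_{j}}\}$ on $Y$.

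The main obstacle is bookkeeping: one must verify that the constant $C$ produced in the slope estimate is genuinely independent of the particular swamp $(\mathscr{E},\underline{q},\phi)$. This is the role of Equation (\ref{lema bound}), which replaces the contribution of $\phi$ by a universal numerical quantity depending only on $a$ and $\alpha$; without it the argument would depend on the specific morphism $\phi$ and could not yield a uniform bound.
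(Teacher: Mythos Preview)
Your proof is correct and follows essentially the same approach as the paper: test semistability on one-step filtrations, use Equation~(\ref{lema bound}) to absorb the swamp contribution by the universal constant $a(\alpha-1)$, control the parabolic correction by a constant depending only on $r,\nu,\underline{\kappa}$, and conclude by Grothendieck's boundedness lemma. The paper does this in one line, obtaining directly $\mu(\mathscr{E}')\le \mu(\mathscr{E})+a\delta+r\nu$ for every subsheaf $\mathscr{E}'\subset\mathscr{E}$ and citing \cite[Lemma~2.5]{FGA}; your componentwise elaboration (bounding each $d_j$ and then $\mu_{\max}(\mathscr{E}|_{Y_j})$) is exactly what the paper spells out separately in Remark~\ref{coletascabron}.
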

\begin{proof}
By  Equation (\ref{lema bound}) and a simple calculation, it follows that
$\mu(\mathscr{E}')\leq\mu(\mathscr{E})+a\delta+r\nu$.
Then, we conclude by \cite[Lemma 2.5]{FGA}.
\end{proof}

\begin{remark}\label{coletascabron}
Let $C'_{1}=\alpha C_{1}$. Note that if $\textrm{deg}(\mathscr{E})\leq 0$ then $\textrm{deg}(\mathscr{E}')\leq \mu(\mathscr{E})+C'_{1}$, and if $\textrm{deg}(\mathscr{E})>0$ then $\textrm{deg}(\mathscr{E}')\leq \textrm{deg}(\mathscr{E})+C'_{1}$. In both cases the degree of any subsheaf $\mathscr{E}'\subset\mathscr{E}$ is bounded by a constant depending only on $a,\delta,r,h,\nu, d$. This in particular means that for any locally free sheaf $\mathscr{E}$ of rank $r$ and degree $d$ appearing in a $(\underline{\kappa},\delta)$-semistable swamp with generalized parabolic structure of type $(a,-,-,-,-)$ (this means that the first component is fixed and equal to $a$ but the others are left to be free) we have that $\textrm{deg}(\mathscr{E}|_{Y_{i}})$ is bounded from below and above by constants depending only on $a,\delta,\alpha,\nu, d$ which we will denote by $A_{-}(a,\delta,r,h,\nu, d)$ and $A_{+}(a,\delta,r,h,\nu, d)$, or just by $A_{-}$ and $A_{+}$ if there is no confusion.
\end{remark}

\subsection{The Gieseker space and map}\label{giesekerpoint}
Our goal now is to construct the Gieseker space together with the Gieseker map, and to construct a representative for the moduli functor given in Equation (\ref{rigfunctorswamps}). We will assume that $e_{i}\neq 0$ for each $i=1,\hdots \nu$. If $e_{i}=0$ for some index $i$, we will only have to drop the corresponding Grassmannian in Equation (\ref{Z-space}) and Equation (\ref{giesmap}) below.

\subsubsection{The parameter space}\label{parameterparten}

Let $H$ be an effective divisor of degree $h$ in $Y$ such that $\mathscr{O}_{Y}(H)\simeq \mathscr{O}_{Y}(1)$.
By Proposition \ref{boundparabolicSwamp2} we know that there exists a natural number $n_{0}\in\mathbb{N}$ such that for every $n\geq n_{0}$ and every $(\underline{\kappa},\delta)$-(semi)stable generalized parabolic swamp of type $\mathfrak{tp}=(a,b,c,\Lcc,\underline{e})$ of rank $r$ and degree $d$ we have
$
H^{1}(Y,\mathscr{E}(n))=H^{1}(Y,\textrm{det}(\mathscr{E}(rn)))=H^{1}(Y,\textrm{det}(\mathscr{E})^{\otimes c}\otimes\Lcc\otimes\mathscr{O}_{Y}(an))=0
$
and the locally free sheaves  $\mathscr{E}(n),\textrm{det}(\mathscr{E}(rn)),\textrm{det}(\mathscr{E})^{\otimes c}\otimes\Lcc\otimes\mathscr{O}_{Y}(an)$ are globally generated.
Fix $n\geq n_{0}$ as above,
and $\underline{d}=(d_{1},\hdots,d_{l})\in\mathbb{N}^{l}$ with $d=\sum_{i=1}^{l}d_{i}$, and set $p=r\chi(\mathscr{O}_{Y})+ d+\alpha n$ (recall $\alpha=hr$). Let $U$ be the vector space $\mathbb{C}^{\oplus p}$. We will use the notation $U_{a,b}$ for $(U^{\otimes a})^{\oplus b}$. Denote by $\mathcal{Q}^{0}$ the quasi-projective scheme parametrizing equivalence classes of quotients $\q\colon U\otimes\pi_{Y}^{*}\mathscr{O}_{Y}(-n)\rightarrow\mathscr{E}$ where $\mathscr{E}$ is a locally free sheaf of uniform multirank $r$ and multidegree $\underline{d}=(d_{1},\hdots,d_{l})$ on $Y$ and such that the induced map $U\rightarrow H^{0}(Y,\mathscr{E}(n))$ is an isomorphism. On $\Q\times Y$, we have the universal quotient
$
\q_{\Q}\colon U\otimes\pi_{Y}^{*}\mathscr{O}_{Y}(-n)\rightarrow \mathscr{E}_{\Q}.
$
Since $n>n_{0}$, the sheaf
$
\mathcal{H}:=\mathcal{H}om_{\mathscr{O}_{\Q}} (U_{a,b}\otimes\mathscr{O}_{\Q}, \pi_{\Q*}(\textrm{det}(\mathscr{E}_{\Q})^{\otimes c}\otimes\pi_{Y}^{*}\Lcc \otimes \pi_{Y}^{*}\mathscr{O}_{Y}(na)))
$
is locally free.
Consider the corresponding projective bundle $\pi'\colon \mathfrak{h}=\Pp(\mathcal{H}^{\vee}) \rightarrow\Q$ and let
$\q_{\mathfrak{h}}\colon U\otimes\pi_{Y}^{*}\mathscr{O}_{Y}(-n)\rightarrow \mathscr{E}_{\mathfrak{h}}$
be the pullback of the universal locally free sheaf to $\mathfrak{h}\times Y$. Now, the tautological invertible quotient on $\mathfrak{h}$,
$\pi^{'*}\mathcal{H}^{\vee}\rightarrow \mathscr{O}_{\mathfrak{h}}(1)\rightarrow 0$,
induces a morphism on $\mathfrak{h}\times Y$,
$s_{\mathfrak{h}}:U_{a,b}\otimes \mathscr{O}_{\mathfrak{h}}\rightarrow \textrm{det}(\mathscr{E}_{\mathfrak{h}})^{\otimes c}\otimes \pi_{Y}^{*}\Lcc\otimes\pi_{Y}^{*}\mathscr{O}_{Y}(na) \otimes\pi_{\mathfrak{h}}^{*} \mathscr{O}_{\mathfrak{h}}(1)$.
From the universal quotient we get a surjective morphism
$(\mathfrak{q}_{\mathfrak{h}}^{\otimes a})^{\oplus b}:U_{a,b}\otimes\pi_{Y}^{*}\mathscr{O}_{Y}(-na)\rightarrow (\mathscr{E}_{\mathfrak{h}}^{\otimes a})^{\oplus b}.$
Denoting by $\K$ its kernel, we get a diagram
$$
\xymatrix{
0\ar[r] & \K\ar[r]\ar@{-->}[rd] & U_{a,b}\otimes\pi_{Y}^{*}\mathscr{O}_{Y}(-na) \ar[r] \ar[d]^{s_{\mathfrak{h}}\otimes \pi_{Y}^{*}id_{\mathscr{O}_{Y}(-na)}} & (\mathscr{E}_{\mathfrak{h}}^{\otimes a})^{\oplus b}\ar[r] & 0 \\
   &  &  \textrm{det}(\mathscr{E}_{\mathfrak{h}})^{\otimes c}\otimes \pi_{Y}^{*}\Lcc\otimes\pi_{\mathfrak{h}}^{*} \mathscr{O}_{\mathfrak{h}}(1) & &
}
$$
From \cite[Lemma 3.1]{sols}, it follows that
there is a closed subscheme $\mathfrak{G}\subset \mathfrak{h}$ over which $s_{\mathfrak{h}}\otimes \pi_{Y}^{*}\textrm{id}_{\mathscr{O}_{Y}(-na)}$ factorizes through a morphism
$
\phi_{\mathfrak{G}}:(\mathscr{E}_{\mathfrak{G}}^{\otimes a})^{\oplus b}\rightarrow \textrm{det}(\mathscr{E}_{\mathfrak{G}})^{\otimes c}\otimes \pi_{Y}^{*}\Lcc\otimes\pi_{\mathfrak{G}}^{*} \mathfrak{N_{\mathfrak{G}}},
$
$\pi_{\mathfrak{G}}^{*} \mathfrak{N_{\mathfrak{G}}}$ being the pullback of the restriction of $\mathscr{O}_{\mathfrak{h}}(1)$ to $\mathfrak{G}$. Then, on the scheme $\mathfrak{G}\times Y$ we have a family of swamps $(\mathscr{E}_{\mathfrak{G}},\mathfrak{N}_{\mathfrak{G}},\phi_{\mathfrak{G}})$ parametrized by $\mathfrak{G}$. 
In order to include the parabolic structure, we need to consider the Grassmannian $\mathscr{G} r_{i}:=\textrm{Grass}_{e_{i}}(U^{\oplus 2})$ of $e_{i}$ dimensional quotients of $U^{\oplus 2}$. Recall that $\nu$ is the number of nodes of the curve, so that we have $\nu$ divisors, $D_{i}=y_{1}^{i}+y_{2}^{i}$, in the normalization $Y$. Define, 
\begin{equation}\label{Z-space}
Z:=\mathfrak{G}\times \mathscr{G} r_{1}\times\cdots\times\mathscr{G} r_{\nu}
\end{equation} 
and denote by $c_{i}:Z\rightarrow\mathscr{G} r_{i}$ the $i$th projection. Consider the pullback of the universal quotient of the Grassmannian $\mathscr{G} r_{i}$ by the projection $c_{i}$,
$q_{Z}^{i}: U^{\oplus 2}\otimes\mathscr{O}_{Z}\rightarrow R_{Z}$,
and take the direct sum
$q_{Z}: U^{\oplus 2\nu}\otimes\mathscr{O}_{Z}\rightarrow \bigoplus_{1}^{\nu} R_{Z}$.
Consider now the two natural projections
$
\mathfrak{G}\times Y\rightarrow  \mathfrak{G}, \  Z\times Y\rightarrow Z.
$
Denote by $\mathfrak{N}_{Z}$ the pullback of $\mathfrak{N}_{\mathfrak{G}}$ to $Z$, and by $\mathfrak{q}_{Z}$, $\mathscr{E}_{Z}$ and $\phi_{Z}$ the pullbacks of the corresponding objects over $\mathfrak{G}\times Y$ to $Z\times Y$. Consider the morphisms $\pi^{i}: Z\times\{y^{i}_{1}, y^{i}_{2}\}\rightarrow Z\times\{x_{i}\}\simeq Z$.
For each $i$, there are quotients $f_{i}:U^{\oplus 2}\times\mathscr{O}_{Z}\rightarrow \pi^{i}_{*}(\mathscr{E}_{Z}|_{y^{i}_{1},y^{i}_{2}})$ and we can form $f\colon=\bigoplus f_{i}:U^{\oplus 2\nu}\times\mathscr{O}_{Z}\rightarrow \bigoplus\pi^{i}_{*}(\mathscr{E}_{Z}|_{y^{i}_{1},y^{i}_{2}})$.
Consider the following diagram,
$$
\xymatrix{
0\ar[r]& \textrm{Ker}(f)\ar[r]\ar@{.>}[rd]^{q'} & 
U^{\oplus 2\nu}\times\mathscr{O}_{Z} \ar[r]^{f}\ar[d]^{q_{Z}}& \bigoplus\pi^{i}_{*}(\mathscr{E}_{Z}|_{y^{i}_{1},y^{i}_{2}}) \ar[r]& 0 \\
 & &  \bigoplus_{1}^{\nu} R_{Z}  & &
}
$$
Denote by $\mathfrak{I}_{\underline{d}}\subset Z$ the closed subscheme given by the zero locus of the morphism $q'$ (see \cite[lemma 3.1]{sols} again). Then the restriction of $q_{Z}$ to $\mathfrak{I}_{\underline{d}}$ factorizes through 
\begin{equation*}
q_{\mathfrak{I}_{\underline{d}}}\colon\bigoplus_{1}^{\nu}\pi_{*}^{i}(\mathscr{E}_{Z}|_{y^{i}_{1},y^{i}_{2}})|_{\mathfrak{I}_{\underline{d}}}= \bigoplus_{1}^{\nu}\pi^{i}_{\mathfrak{I}_{\underline{d}}*}(\mathscr{E}_{\mathfrak{I}_{\underline{d}}}|_{y^{i}_{1},y^{i}_{2}}) \rightarrow \bigoplus_{1}^{\nu} R_{Z}|_{\mathfrak{I}_{\underline{d}}}=\bigoplus_{1}^{\nu} R_{\mathfrak{I}_{\underline{d}}}.
\end{equation*}
Since $f$ and $q_{Z}$ are diagonal morphisms we deduce that $q_{\mathfrak{I}_{\underline{d}}}$ is also diagonal. Therefore $q_{\mathfrak{I}_{\underline{d}}}$ is determined by $\nu$ morphisms
$q_{\mathfrak{I}_{\underline{d}}}^{i}:\pi^{i}_{\mathfrak{I}_{\underline{d}}*}(\mathscr{E}_{\mathfrak{I}_{\underline{d}}}|_{y^{i}_{1},y^{i}_{2}}) \rightarrow R_{\mathfrak{I}_{\underline{d}}}$.
Denote by $(\mathscr{E}_{\mathfrak{I}_{\underline{d}}},\mathfrak{N}_{\mathfrak{I}_{\underline{d}}},\phi_{\mathfrak{I}_{\underline{d}}})$ the restriction of $(\mathscr{E}_{Z},\mathfrak{N}_{Z},\phi_{Z})$ to $\mathfrak{I}_{\underline{d}}$. Then we have a universal family of generalized parabolic swamps,
$(\mathscr{E}_{\mathfrak{I}_{\underline{d}}},\underline{q}_{\mathfrak{I}_{\underline{d}}} ,\mathfrak{N}_{\mathfrak{I}_{\underline{d}}},\phi_{\mathfrak{I}_{\underline{d}}})$,
with rank $r$, multidegree $(d_{1},\hdots,d_{l})$ and type $\mathfrak{tp}=(a,b,c,\Lcc,\underline{e})$. Let us denote 
\begin{equation}\label{indicesdegree}
I(r,d,\underline{\kappa},\delta,\mathfrak{tp})=\left\{
(d_{1},\hdots,d_{l})\in\mathbb{N}^{l} \textrm{ }\begin{array}{|l}
\textrm{satisfying the condition $d_1+\hdots+d_l=d$ and}\\
\textrm{such that there exists a } (\underline{\kappa},\delta)\textrm{-semistable swamp} \\
\textrm{with generalized parabolic structure of rank $r$}\\ 
\textrm{multidegree $(d_{1},\hdots,d_{l})$ and type $\mathfrak{tp}$}
\end{array}
\right\}
\end{equation}
From Remark \ref{coletascabron} it follows that for every multiindex  $(d_{1},\hdots,d_{l})\in I(r,d,\underline{\kappa},\delta,\mathfrak{tp})$ we have  $A_-\leq d_i\leq A_{+},i=1,\hdots,l$. Thus, $I(r,d,\underline{\kappa},\delta,\mathfrak{tp})$ is a finite set. Then we define
\begin{equation*}\label{espacioparamgordo}
\mathfrak{I}_{r,d,\mathfrak{tp}}:=\coprod_{\underline{d} \in I(r,d,\delta)}\mathfrak{I}_{\underline{d}}.
\end{equation*}

\subsubsection{The Gieseker space and map}\label{espaciocolega}

We will show that there is a natural closed embedding of the parameter space $\mathfrak{I}_{\underline{d}}$ into certain projective scheme which is $\textrm{SL}(U)$-equivariant.

Fix a Poincare invertible sheaf $\mathcal{P}_{i}$ on $Y_{i}\times \textrm{Pic}^{d_{i}}(Y_{i})$ and let $n\in\mathbb{Z}$. Define the sheaf
$
\mathcal{G}_{1}^{i}= \mathcal{H}om_{\mathscr{O}_{\textrm{Pic}^{d_{i}}(Y_{i})}}(\bigwedge^{r}U\otimes \mathscr{O}_{\textrm{Pic}^{d_{i}}(Y_{i})},\pi_{\textrm{Pic}^{d_{i}}(Y_{i})*}(\mathcal{P}_{i} \otimes\pi^{*}_{Y_{i}} \mathscr{O}_{Y_{i}}(rn))).
$
The natural number we have fixed satisfies $n>n_0$, therefore the above sheaf is locally free, and we can consider the corresponding projective bundle on $\textrm{Pic}^{d_{i}}(Y_{i})$,
$\mathbb{G}_{1}^{i}=\mathbb{\textbf{P}}(\mathcal{G}_{1}^{i\vee})$.
Note that the determinant map
$\mathscr{E}_{\mathfrak{I}_{\underline{d}}}\mapsto \bigwedge\mathscr{E}_{\mathfrak{I}_{\underline{d}}}|_{Y_{i}}= \bigwedge(\mathscr{E}_{\mathfrak{I}_{\underline{d}}}|_{Y_{i}})$
defines a morphism
$\mathfrak{d}_{i}:\mathfrak{I}_{\underline{d}}\rightarrow \textrm{Pic}^{d_{i}}(Y_{i})$.
Consider now on $\mathfrak{I}_{\underline{d}}\times Y$ the universal quotient $q_{\mathfrak{I}_{\underline{d}}}:U\otimes \pi_{Y}^{*}\mathscr{O}_{Y}(-n) \rightarrow\mathscr{E}_{\mathfrak{I}_{\underline{d}}}$. Restricting to the $i$th component, twisting by $n$ and taking determinants we get
$\bigwedge q_{\mathfrak{I}_{\underline{d}}}^{i}(n): \bigwedge^{r} U\otimes\mathscr{O}_{\mathfrak{I}_{\underline{d}}\times Y_{i}} \rightarrow \bigwedge^{r}\mathscr{E}_{\mathfrak{I}_{\underline{d}}}|_{Y_{i}}\otimes \pi_{Y_{i}}^{*} \mathscr{O}_{Y_{i}}(nr)$.
Let $\mathscr{N}_{i}$ be an invertible locally free sheaf on $\mathfrak{I}_{\underline{d}}$ such that $\bigwedge^{r}\mathcal{E}_{\mathfrak{I}_{\underline{d}}}|_{Y_{i}}= (\mathfrak{d}_{i}\times id_{Y_{i}})^{*}\mathcal{P}_{i}\otimes \pi_{\mathfrak{I}_{\underline{d}}}^{*}\mathscr{N}_{i}$. Then, we have a point 
$\pi_{\mathfrak{I}_{\underline{d}}*}(\bigwedge q_{\mathfrak{I}_{\underline{d}}}^{i}(n))\in \mathbb{G}^{i\bullet}_{1}(\mathfrak{I}_{\underline{d}})$ for each $i$.  

Define now
$
\mathcal{G}_{2}= \mathcal{H}om_{\mathscr{O}_{\textrm{Pic}^{d}(Y)}}(U_{a,b}\otimes\mathscr{O}_{\textrm{Pic}^{d}(Y)},\pi_{\textrm{Pic}^{d}(Y)*}( \mathcal{P}^{\otimes c}\otimes \pi_{Y}^{*}\mathscr{L}\otimes \pi^{*}_{Y}\mathscr{O}_{Y}(na))).
$
For $n>n_0$, $\mathcal{G}_{2}$ is also locally free and we can consider the corresponding projective bundle on $\textrm{Pic}^{\underline{d}}(Y)$,
$\mathbb{G}_{2}=\mathbb{\textbf{P}}(\mathcal{G}_{2}^{\vee})$.
Consider now the universal quotient $\q_{\mathfrak{I}_{\underline{d}}}:U\otimes \mathscr{O}_{\mathfrak{I}_{\underline{d}}\times Y}(-n)\rightarrow \mathscr{E}_{\mathfrak{I}_{\underline{d}}}$
and the universal swamp
$\phi_{\mathfrak{I}_{\underline{d}}}:(\mathscr{E}_{\mathfrak{I}_{\underline{d}}}^{\otimes a})^{\oplus b}\rightarrow \textrm{det}(\mathscr{E}_{\mathfrak{I}_{\underline{d}}})^{\otimes c}\otimes \pi_{Y}^{*}\mathscr{L} \otimes\pi_{\mathfrak{I}_{\underline{d}}}^{*} \mathfrak{N}_{\mathfrak{I}_{\underline{d}}}$.
Let $\mathcal{N}$ be an invertible sheaf on $\mathfrak{I}_{\underline{d}}$ such that $\textrm{det}(\mathcal{E}_{\mathfrak{I}_{\underline{d}}})= (\mathfrak{d}\times \textrm{id})^{*}\mathcal{P}\otimes \pi_{\mathfrak{I}_{\underline{d}}}^{*}\mathcal{N}$ and note $U_{a,b}\otimes \mathscr{O}_{\mathfrak{I}_{\underline{d}}\times Y} \simeq \pi_{\mathfrak{I}_{\underline{d}}}^{*}(U_{a,b}\otimes \mathscr{O}_{\mathfrak{I}_{\underline{d}}})$.
Composing  $(\q_{\mathfrak{I}_{\underline{d}}}(n)^{\otimes a})^{\oplus b}$ with the swamp $\phi_{\mathfrak{I}_{\underline{d}}}$, taking $\pi_{\mathfrak{I}_{\underline{d}}*}$ and composing with the adjunction morphism $\psi:U_{a,b}\otimes \mathscr{O}_{\mathfrak{I}_{\underline{d}}}\rightarrow \pi_{\mathfrak{I*_{\underline{d}}}}\pi_{\mathfrak{I}_{\underline{d}}}^{*}(U_{a,b}\otimes\mathscr{O}_{\mathfrak{I}_{\underline{d}}})$ we get a point $\psi\circ(\pi_{\mathfrak{I}_{\underline{d}}*}(\phi_{\mathfrak{I}_{\underline{d}}} \circ (\q_{\mathfrak{I}_{\underline{d}}}(n)^{\otimes a})^{\oplus b}))\in \mathbb{G}_{2}^{\bullet}(\mathfrak{I}_{\underline{d}})$.

Altogether, with the obvious morphism to the Grasmannians, $\mathfrak{I}_{\underline{d}}\rightarrow \mathcal{G}r_{1}\times \hdots\times \mathcal{G}r_{\nu}$, give us the so called Gieseker morphism
\begin{equation}\label{giesmap}
\xymatrix{
\textrm{Gies}:\mathfrak{I}_{\underline{d}}\ar[r]  & (\mathbb{G}_{1}^{1} \times\hdots\times\mathbb{G}_{1}^{l}) \times_{\textrm{Pic}^{\underline{d}}(Y)}\mathbb{G}_{2}\times (\mathcal{G}r_{1}\times\hdots\times \mathcal{G}r_{\nu})=: \mathbb{G}.\\
}
\end{equation}
\begin{proposition}\label{hastalaminga100}
The Gieseker morphism $\emph{Gies}:\mathfrak{I}_{\underline{d}}\rightarrow \mathbb{G}$ is injective and $\emph{SL}(U)$-equivariant.
\end{proposition}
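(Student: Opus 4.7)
The plan is to split the statement into the two claims. The $\textrm{SL}(U)$-equivariance is essentially formal: each of the projection components of Gies---to $\mathbb{G}_{1}^{j}$, to $\mathbb{G}_{2}$, and to each Grassmannian factor $\mathcal{G}r_{i}$---is built by manipulating the universal quotient $\mathfrak{q}_{\mathfrak{I}_{\underline{d}}}$ and the parabolic quotients $\underline{q}_{\mathfrak{I}_{\underline{d}}}$ through operations natural in $U$ (restriction to $Y_{j}$, twists by $\mathscr{O}_{Y}(rn)$ or $\mathscr{O}_{Y}(na)$, exterior powers, push-forward, pairing with $\phi_{\mathfrak{I}_{\underline{d}}}$). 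The $\textrm{SL}(U)$-action on $U$ lifts through every one of these operations, exactly as recorded in the model computations of Subsection \ref{gitcalculations}, so equivariance follows from a direct check that the three equivariant factors assemble equivariantly into the fibre product over $\textrm{Pic}^{\underline{d}}(Y)$.

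For injectivity the strategy is to reconstruct, from the image in $\mathbb{G}$, the full datum parametrised by $\mathfrak{I}_{\underline{d}}$, i.e.\ the quotient $q\colon U\otimes\mathscr{O}_{Y}(-n)\twoheadrightarrow\mathscr{E}$, the parabolic quotients $\underline{q}$ and the swamp $\phi$, with $\phi$ determined only up to the scalar ambiguity absorbed in $\mathfrak{N}_{\mathfrak{G}}$. The parabolic quotients $q_{i}$ are tautologically the images in the Grassmannian factors $\mathcal{G}r_{i}$, and so are recovered immediately once we have the ambient sheaf.

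The main technical step is recovering the underlying quotient sheaf from the $\mathbb{G}_{1}^{j}$ coordinates. The image $\pi_{\mathfrak{I}_{\underline{d}}*}(\bigwedge q_{\mathfrak{I}_{\underline{d}}}^{j}(n))$ is the projective class of the Pl\"ucker-type linear map $\bigwedge^{r}U\to H^{0}(Y_{j},\det(\mathscr{E}|_{Y_{j}})(rn))$ sending a decomposable wedge to the wedge of the images under $q$. Since $n\geq n_{0}$ forces $\mathscr{E}(n)|_{Y_{j}}$ to be globally generated and $U\simeq H^{0}(Y,\mathscr{E}(n))$ to surject onto $H^{0}(Y_{j},\mathscr{E}(n)|_{Y_{j}})$, the kernel of this linear map detects, at every closed point of $Y_{j}$, the subspace $\ker(U\to\mathscr{E}(n)|_{y})$; by the classical Gieseker argument applied componentwise this determines $\ker(q|_{Y_{j}})$ as a saturated subsheaf of $U\otimes\mathscr{O}_{Y_{j}}(-n)$, hence $q|_{Y_{j}}$ itself. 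Running over $j=1,\dots,l$ reconstructs $q$ globally on the disconnected curve $Y$. This is where the bulk of the argument lives; everything else is formal.

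Finally, once $q$ and $\underline{q}$ are recovered, the $\mathbb{G}_{2}$ coordinate is the projective class of $H^{0}(\phi(na))\circ H^{0}((q(n)^{\otimes a})^{\oplus b})$. Surjectivity of $(q(n)^{\otimes a})^{\oplus b}$, together with global generation of $(\mathscr{E}^{\otimes a})^{\oplus b}(na)$ for $n\geq n_{0}$, implies that this composition determines $H^{0}(\phi(na))$, and hence $\phi$, up to a non-zero scalar---which is precisely the ambiguity built into the twist by $\mathfrak{N}_{\mathfrak{G}}$. Thus two points of $\mathfrak{I}_{\underline{d}}$ with the same Gieseker image carry isomorphic data, so they coincide; combined with the equivariance discussion this proves the proposition.
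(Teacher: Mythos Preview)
Your argument is correct and is precisely the content of the classical Gieseker argument that the paper invokes by citation: the paper's own proof consists of the single sentence ``Follows as in the connected case (see for instance \cite[Lemma 4.3]{gies}),'' so you have simply unpacked what that reference contains, adapted componentwise to the disconnected curve and extended by the tautological recovery of the swamp and parabolic data. There is no genuine difference in approach---your sketch \emph{is} the standard proof the paper defers to.
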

\begin{proof}
Follows as in the connected case (see for instance \cite[Lemma 4.3]{gies}).
\end{proof}


\subsection{Semistability}\label{stabilitysec}

We will see that making $n>n_0$ even larger, $\mathfrak{I}_{\underline{d}}$ contains all $(\underline{\kappa},\delta)$-(semi)stable generalized parabolic swamps of fixed type and fixed Hilbert polynomial.
In order to show that the quotient $\mathfrak{I}_{\underline{d}}^{(\underline{\kappa},\delta)\textrm{-(s)s}}\slas\textrm{SL}(U)$ exists and is projective we first find a linearized invertible sheaf on $\mathbb{G}$ for which $\textrm{Gies}^{-1}(\mathbb{G}^{\textrm{(s)s}})=\mathfrak{I}_{\underline{d}}^{(\underline{\kappa},\delta)\textrm{-(s)s}}$ and then we show that $\textrm{Gies}|_{\mathfrak{I}_{\underline{d}}^{(\underline{\kappa},\delta)\textrm{-(s)s}}}$ is a proper morphism. The main auxiliary result is given in Subsection \ref{sectionsectional} (see Theorem \ref{sectionalsemi}) regarding the sectional semistability condition.

\subsubsection{Semistability in the Gieseker space}\label{ovni}

Let $i_1,\hdots,i_{\nu'}$ be the indices in $I(\underline{e})$.
Let $b_{1},\hdots,b_{l},c,k_{i_1},\hdots,k_{i_{\nu'}}$ be positive integers and consider  the ample invertible sheaf on $\mathbb{G}$,
$\mathscr{O}_{\mathbb{G}}(b_{1},\hdots,b_{l},c,k_{i_1},\hdots,k_{i_{\nu'}})$. 
Consider the obvious linearization on it
and let $\mathbb{G}^{(s)s}$ be the set of points which are (semi)stable with respect to the given linearization. Consider a weighted flag $(U^{\bullet},\underline{m})$, where
$U^{\bullet}:(0)\subset U_{1}\subset\hdots\subset U_{s}\subset U$,
and $\underline{m}=(m_{1},\hdots m_{s})$. Let $\lambda:\GG\rightarrow \textrm{SL}(U)$ be a one parameter subgroup whose weighted flag is $(U^{\bullet},\underline{m})$. Let $t$ be a rational point of $\mathfrak{I}_{\underline{d}}$ and $\textrm{Gies}(t)=(t_{1,1},\hdots,t_{1,l},t_{2},t_{3,1},\hdots,t_{3,\nu})$ its image in $\mathbb{G}$. Let
$q_{t}:U\otimes\mathscr{O}_{Y}(-n)\rightarrow\mathscr{E}$
be the locally free quotient sheaf corresponding to $t$. The weighted filtration $(U^{\bullet},\underline{m})$ induces a filtration of $\mathscr{E}$ defined by $\mathscr{E}_{u}:=q(U_{u}\otimes\mathscr{O}_{Y}(-n))\subset\mathscr{E}$.
 Assume that $h^{1}(Y,\mathscr{E}_{u}(n))=0$ and $l_{u}:=\textrm{dim} (U_{u})=h^{0}(Y,\mathscr{E}_{u}(n))$.
Then, the semistability function is given by (see Section \ref{gitcalculations})
\begin{equation}\label{semisemi}
\begin{split}
\mu(\lambda,\textrm{Gies}(t))&=\sum_{i=1}^{l}b_{i} \mu_{\mathbb{G}_{1}}(\lambda,t_{1,i})+ c\mu_{\mathbb{G}_{2}}(\lambda,t_{2})+ \sum_{i=1}^{\nu}k_{i}\mu_{\mathcal{G}r}(\lambda,t_{3,i})= \\
&=\sum_{i=1}^{l}b_{i}\sum_{u=1}^{s}m_{u} (\textrm{rk}(\mathscr{E}^{i}_{u})p-rh^{0}(Y,\mathscr{E}_{u}(n)))+ \\
&+c\sum_{u=1}^{s}m_{u} (\nu(I_{0},l_{u})p-ah^{0}(Y,\mathscr{E}_{u}(n))) + \\
&+\sum_{i\in I(\underline{e})}k_{i}\sum_{u=1}^{s} m_{u} (p \ \textrm{dim}(q_{i}(\mathscr{E}_{u}(y_{1}^{i})\oplus\mathscr{E}_{u}(y_{2}^{i})))-e_{i}h^{0}(Y,\mathscr{E}_{u}(n))).
\end{split}
\end{equation}
We fix now a concrete polarization, defined as follows ($\textrm{recall }h_{i}=\textrm{deg}(\mathscr{O}_{Y}|_{Y_{i}})$),
\begin{equation}\label{descriptionpolarization}
\left\{\begin{array}{l}
b_{i}:=bh_{i}\textrm{, }b:=p-b', \ b':=b'_{1}+b'_{2}\textrm{, }b'_{1}:=a\delta \textrm{, }b'_{2}:=r\sum_{j\in I(\underline{e})}\kappa_{j}, \\
c:=\delta r h=\sum_{i=1}^{l}\delta r h_{i} \\
k_{i}:=\dfrac{r}{e_i}\kappa_{i}\alpha.
\end{array}\right. 
\end{equation}
Then, Equation (\ref{semisemi}) becomes, 
\begin{align*}
\mu(\lambda,\textrm{Gies}(t))=\sum_{u=1}^{s} m_{u} &\Biggl\{ b\alpha_{u}p-h^{0}(Y,\mathscr{E}_{u}(n))\alpha p+ \Biggr.\\
&+\left. c\nu(I_{0},l_{u})p+\sum_{i\in I(\underline{e})}\alpha\kappa_{i}\dfrac{r}{e_i} p \textrm{dim}(q_{i}(\mathscr{E}_{u}(y_{1}^{i})\oplus\mathscr{E}_{u}(y_{2}^{i}))\right\}.
\end{align*}
Again, since $b=p-b'_{1}-b'_{2}$, $b'_{1}=a\delta$ and $\alpha_{u}=\sum_{i=1}^{l} h_{i}\textrm{rk}(\mathscr{E}^{i}_{u})$, we get
\begin{align*}
\dfrac{\mu(\lambda,\textrm{Gies}(t))}{p}=\sum_{u=1}^{s} m_{u} &\Biggl\{p\alpha_{u}-\alpha h^{0}(Y,\mathscr{E}_{u}(n))+ 
\delta\sum_{i=1}^{l}h_{i}( r\nu(I_{0},l_{u})-a\textrm{rk}(\mathscr{E}_{u}^{i}))+ \Biggr.\\
&\left.+\sum_{i\in I(\underline{e})} \alpha\kappa_{i}\dfrac{r}{e_i}\textrm{dim}(q_{i}(\mathscr{E}_{u}(y_{1}^{i})\oplus\mathscr{E}_{u}(y_{2}^{i}))-b'_{2}\alpha_{u}\right\}.
\end{align*}
Since the first cohomology groups are assumed to be $0$, we find
\begin{equation*}
p\alpha_{u}-\alpha h^{0}(\mathscr{E}_{u}(n))= \alpha_{u}P_{\mathscr{E}}(n)-\alpha P_{\mathscr{E}_{u}}(n)= \alpha_{u}\textrm{deg}(\mathscr{E})-\alpha \textrm{deg}(\mathscr{E}_{u}).
\end{equation*}
We also know that $\underline{\kappa}\textrm{-pardeg}(\mathscr{E}_{u})=\textrm{deg}(\mathscr{E}_{u})-\sum_{i\in I(\underline{e})}\kappa_{i}\dfrac{r}{e_i}\textrm{dim}(q_{i}(\mathscr{E}_{u}(y_{1}^{i})\oplus\mathscr{E}_{u}(y_{2}^{i}))$ and $\underline{\kappa}\textrm{-pardeg}(\mathscr{E})=\textrm{deg}(\mathscr{E})-r(\sum_{i\in I(\underline{e})}\kappa_{i})$.
Then, we finally get
\begin{equation*}\label{stabilitypolarization}
\dfrac{\mu(\lambda,\textrm{Gies}(t))}{p}=\sum_{u=1}^{s} m_{u} \biggl\{(\alpha_{u}\underline{\kappa}\textrm{-pardeg}(\mathscr{E}) - \alpha \underline{\kappa}\textrm{-pardeg}(\mathscr{E}_{u}))+
\delta(\alpha\nu(I_{0},l_{u})-a\alpha_{u})\biggr\}.
\end{equation*}


\subsubsection{Sectional semistability}\label{sectionsectional}


Given a swamp with generalized parabolic structure, $(\mathscr{E},\underline{q},\phi)$ rank $r$, degree $d$ and type $\mathfrak{tp}=(a,b,c,\Lcc,\underline{e})$, we will use the following notation,

$$
\left\{\begin{array}{l}
\textrm{par}\chi(\mathscr{E}(n)):=\chi(\mathscr{E}(n))-\sum_{i\in I(\underline{e})} \kappa_{i}\dfrac{r}{e_i} \ \textrm{dim} \ q_{i}(\mathscr{F}(y_{1}^{i})\oplus\mathscr{F}(y_{2}^{i})), \\
\textrm{par}h^{0}(\mathscr{E}(n)):=h^{0}(Y,\mathscr{E}(n))-\sum_{i\in I(\underline{e})} \kappa_{i}\dfrac{r}{e_i} \ \textrm{dim} \ q_{i}(\mathscr{F}(y_{1}^{i})\oplus\mathscr{F}(y_{2}^{i})),\\
\textrm{par}\mu(\mathscr{E}):=\dfrac{\underline{\kappa}\textrm{-pardeg}(\mathscr{E})}{\alpha}.
\end{array}\right.
$$

In the next theorem we adapt the result  \cite[Theorem 2.12]{Alexander3} to our case.

\begin{theorem}\label{sectionalsemi}
There exists $n_{2}\in\mathbb{N}$ such that for very $n>n_{2}$ and every $(\underline{\kappa},\delta)$-(semi)stable generalized parabolic swamp, $(\mathscr{E},\underline{q},\phi)$, the following inequality
\begin{equation*}
\sum_{i=1}^{s}m_{i}(\emph{par}\chi(\mathscr{E}(n))\alpha_{i}-\emph{par}h^{0}(\mathscr{E}_{i}(n))\alpha)+\delta\mu(\mathscr{E}_{\bullet},\underline{m},\phi)(\geq)0
\end{equation*}
holds true for every weighted filtration $(\mathscr{E}_{\bullet},\underline{m})$.
\end{theorem}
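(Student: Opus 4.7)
The plan is to adapt the argument of \cite[Theorem 2.12]{Alexander3} to the present setting of a disjoint union of smooth projective curves carrying generalized parabolic data. The central principle is that for $n$ sufficiently large, $h^{1}(\mathscr{E}_{i}(n))=0$ for every subsheaf $\mathscr{E}_{i}$ actually relevant for testing $(\underline{\kappa},\delta)$-semistability, so that $\textrm{par}h^{0}(\mathscr{E}_{i}(n))=\textrm{par}\chi(\mathscr{E}_{i}(n))$; this converts the sectional inequality into an essentially $n$-independent rewriting of the $(\underline{\kappa},\delta)$-semistability inequality $P_{\underline{\kappa}}(\mathscr{E}_{\bullet},\underline{m})+\delta\mu(\mathscr{E}_{\bullet},\underline{m},\phi)(\geq)0$.

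To make this rigorous, I first invoke Proposition \ref{boundparabolicSwamp2} to get $n_{1}$ with $h^{1}(\mathscr{E}(n))=0$ for all $n>n_{1}$ and every $\mathscr{E}$ underlying a semistable swamp of the fixed type. By Proposition \ref{lemacarra4} together with Remark \ref{finiteset}, it suffices to test the sectional inequality over finitely many combinatorial types of filtrations all of whose terms satisfy $\textrm{par}\mu(\mathscr{E}_{i})\geq\textrm{par}\mu(\mathscr{E})-C_{1}$; filtrations that include a very low-slope term are handled by removing it and absorbing its contribution via Equation (\ref{lema bound}). For these compliant subsheaves, Remark \ref{coletascabron} yields uniform upper and lower bounds on $\deg(\mathscr{E}_{i}|_{Y_{k}})$, so \cite[Lemma 2.5]{FGA} bundles them into a single bounded family; one then enlarges $n_{1}$ to some $n_{2}$ so that $h^{1}(\mathscr{E}_{i}(n))=0$ for $n>n_{2}$ and every $\mathscr{E}_{i}$ from this family.

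For such $n$, the substitution $\textrm{par}h^{0}(\mathscr{E}_{i}(n))=\textrm{par}\chi(\mathscr{E}_{i}(n))$ is legitimate. Expanding both Euler characteristics by Riemann--Roch makes the linear-in-$n$ contributions cancel between $\textrm{par}\chi(\mathscr{E}(n))\alpha_{i}$ and $\textrm{par}\chi(\mathscr{E}_{i}(n))\alpha$, and regrouping the constant terms exactly as in the polarization computation culminating in the final displayed formula of Subsection \ref{ovni} identifies the LHS with $P_{\underline{\kappa}}(\mathscr{E}_{\bullet},\underline{m})+\delta\mu(\mathscr{E}_{\bullet},\underline{m},\phi)$; the inequality is then the semistability hypothesis. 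The main obstacle is producing a single $n_{2}$ that works across the entire moduli problem: this relies crucially on Remark \ref{finiteset} (finitely many shapes of filtrations), Proposition \ref{boundparabolicSwamp2} (bounded ambient sheaves), and Remark \ref{coletascabron} with \cite[Lemma 2.5]{FGA} (bounded compliant subsheaves). Without all three, one would only obtain $n_{2}$ depending on the individual semistable swamp and filtration.
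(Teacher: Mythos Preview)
Your overall architecture matches the paper's: split any filtration into ``good'' terms (those lying in a bounded family, so that $h^{1}(\mathscr{E}_{i}(n))=0$ for $n$ large) and ``bad'' low-slope terms, reduce the sectional expression on the good part to $P_{\underline{\kappa}}+\delta\mu$ via Riemann--Roch, and control the interaction with Equation (\ref{lema bound}). But there is a genuine gap in how you dispose of the bad terms.

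You write that a very low-slope term is ``handled by removing it and absorbing its contribution via Equation (\ref{lema bound}).'' Equation (\ref{lema bound}) only controls the $\mu$-piece: dropping the bad indices costs $\delta(\sum m_{l_q})a(\alpha-1)$. To absorb that cost you must show that each bad term contributes at least $\delta a(\alpha-1)$ to the \emph{sectional} sum, i.e.\ that $\textrm{par}\chi(\mathscr{E}(n))\alpha_{i}-\textrm{par}h^{0}(\mathscr{E}_{i}(n))\alpha$ is large. This is an upper bound on $h^{0}(\mathscr{E}_{i}(n))$, not a statement about degrees, and it does \emph{not} follow from Proposition \ref{lemacarra4}, whose proof runs in the opposite direction (it bounds $P_{\underline{\kappa}}$ via $\underline{\kappa}$-pardeg, and the inequality $\textrm{par}h^{0}\geq\textrm{par}\chi$ goes the wrong way for your purposes). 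The paper supplies exactly this missing bound: it applies the Le Potier--Simpson estimate \cite[Corollary 3.3.8]{Huyb} to the Harder--Narasimhan factors of a bad $\mathscr{E}'$, using that $\mu_{\max}(\mathscr{E}')\leq d/\alpha+C_{1}$ while $\mu(\mathscr{E}')<d/\alpha-C_{2}$, to get
\[
h^{0}(\mathscr{E}'(n))\leq \alpha'\Bigl(\tfrac{d}{\alpha}+n+B-\tfrac{C_{2}}{\alpha}+C_{1}(\alpha-1)\Bigr),
\]
and then chooses the auxiliary constant $C_{2}$ large enough that the resulting lower bound $K$ on the sectional term exceeds $\delta a(\alpha-1)$. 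Without this estimate (or an equivalent), your ``absorption'' step is unjustified: a low-slope subsheaf of $\mathscr{E}$ can still have large $h^{0}$ coming from high-slope Harder--Narasimhan pieces, and nothing you cite rules that out. A smaller point: Remark \ref{coletascabron} bounds $\deg(\mathscr{E}|_{Y_{k}})$ for the ambient sheaf, not componentwise degrees of subsheaves; the boundedness of the good family is rather obtained from the two-sided slope bounds $\mu(\mathscr{E}')\geq d/\alpha-C_{2}$ and $\mu_{\max}(\mathscr{E}')\leq d/\alpha+C_{1}$.
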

\begin{proof}
Let $(\mathscr{E}_{\bullet},\underline{m})$ be a weighted filtration. Assume that each $\mathscr{E}_{i}$ satifies that $\mathscr{E}_{i}(n)$ is globally generated and $h^{1}(Y,\mathscr{E}_{i}(n))=0$ for each $i=1,\hdots,s$. Then, for each $i$ we have
$
\textrm{par}\chi(\mathscr{E}(n))\alpha_{i}-\textrm{par}h^{0}(\mathscr{E}_{i}(n))\alpha=\underline{\kappa}\textrm{-pardeg}(\mathscr{E})\alpha_{i}-\underline{\kappa}\textrm{-pardeg}(\mathscr{E}_{i})\alpha,
$
and we are done. Let $C_{1}$ be the constant given in Proposition \ref{boundparabolicSwamp2} and let $C_{2}$ be another constant.
Consider the bounded family of isomorphism classes of locally free sheaves $\mathscr{E}'$ satisfying
$\textrm{a) }\mu(\mathscr{E}')\geq\frac{d}{\alpha}-C_{2}$,
$\textrm{ b) }1\leq\alpha'\leq\alpha-1$ and
$\textrm{ c) }\mu_{\textrm{max}}(\mathscr{E}')\leq\frac{d}{\alpha}+C_{1}$.
Let $\mathscr{E}$ be a locally free sheaf appearing in a a $(\underline{\kappa},\delta)$-(semi)stable swamp of rank $r$ and degree $d$, and let $\mathscr{E}'\subset\mathscr{E}$ be a locally free subsheaf that do not belongs to the above family. Applying Le Potier-Simpson Estimate to the factors of the Harder-Narashimham filtration of $\mathscr{E}'$ (see \cite[Corollary 3.3.8]{Huyb}), we get
\begin{equation*}
h^{0}(\mathscr{E}'(n))\leq\alpha' (\frac{\alpha'-1}{\alpha'} [\frac{d}{\alpha}+C_{1}+n+B]_{+}+\frac{1}{\alpha'}[\frac{d}{\alpha}-C_{2}+n+B]_{+}),
\end{equation*}
where $B:=-1+\alpha(\alpha+1)/2$. Assume $n$ is large enough so that $\frac{d}{\alpha}+C_{1}+n+B$ and $\frac{d}{\alpha}-C_{2}+n+B$ are positive. Then, $h^{0}(\mathscr{E}'(n))\leq  \alpha'(\frac{d}{\alpha}+n+B-\frac{C_{2}}{\alpha}+C_{1}(\alpha-1))$.
From this we deduce that
$\chi(\mathscr{E}(n))\alpha'-h^{0}(\mathscr{E}'(n))\alpha
\geq -[B']_{+}\alpha^{2}+C_{2} -C_{1}\alpha(\alpha-1)^{2}$,
where $B':=B+\dfrac{d}{\alpha}$. Since $B$ depends only on $\alpha$, we can define the constant $K=K(C_{1},C_{2},\alpha,l,\underline{\kappa},d):=-[B']_{+}\alpha^{2}+C_{2} -C_{1}\alpha(\alpha-1)^{2}-r\alpha(\sum_{j\in I(\underline{e})}\kappa_{j})$.
Then,
$\textrm{par}\chi(\mathscr{E}(n))\alpha_{i}-\textrm{par}h^{0}(\mathscr{E}_{i}(n))\alpha
\geq -[B']_{+}\alpha^{2}+C_{2} -C_{1}\alpha(\alpha-1)^{2}-r\alpha(\sum_{j\in I(\underline{e})}\kappa_{j}).$
Let $C_{2}$ be large enough so that $K>\delta a(\alpha-1)$ and let $n$ be large enough so that, for every $\mathscr{E}'$ satisfying a), b) and c), $h^{1}(Y,\mathscr{E}'(n))=0$ and $\mathscr{E}'$ is globally generated. Let $(\mathscr{E}_{\bullet},\underline{m})$ be a weighted filtration with $\mathscr{E}_{\bullet}\equiv (0)\subset \mathscr{E}_{1}\subset\hdots\subset\mathscr{E}_{s}\subset \mathscr{E}$ and $\underline{m}=(m_{1},\hdots,m_{s})$. We make a partition of this filtration as follows. Let $j_{1},\hdots,j_{t}$ be the indices such that  $\mu(\mathscr{E}_{j_{i}})\geq\dfrac{d}{\alpha}-C_{2}$, $\mathscr{E}_{j_{i}}(n) \textrm{ is globally generated }$ and $h^{1}(Y,\mathscr{E}_{j_{i}}(n))=0$
for $i=1,\hdots, t$. Let $l_{1},\hdots,l_{s-t}$ the set of indices $\{1,2,\hdots,s\}\setminus\{j_{1},\hdots,j_{t}\}$ in increasing order. Define the weighted filtrations $(\mathscr{E}_{1,\bullet},\underline{m}_{1})$ and $(\mathscr{E}_{2,\bullet},\underline{m}_{2})$ as
\begin{align*}
\mathscr{E}_{\bullet,1}&\equiv \ (0)\subset\mathscr{E}_{j_{1}}\subset\hdots\subset\mathscr{E}_{j_{t}}\subset\mathscr{E}, \ \ \underline{m}_{1}=(m_{j_{1}},\hdots,m_{j_{t}}), \\
\mathscr{E}_{\bullet,2}&\equiv \ (0)\subset\mathscr{E}_{l_{1}}\subset\hdots\subset\mathscr{E}_{l_{s-t}}\subset\mathscr{E}, \ \underline{m}_{2}=(m_{l_{1}},\hdots,m_{l_{s-t}}).
\end{align*}
From Equation (\ref{lema bound}) we find that 
$\mu(\mathscr{E}_{\bullet},\underline{m},\phi)\geq \mu(\mathscr{E}_{\bullet,2},\underline{m}_{2},\phi)-(\sum_{q=1}^{t}m_{j_{q}})a(\alpha-1).$
Thus
\begin{align*}
&\sum_{i=1}^{s}m_{i}(\textrm{par}\chi(\mathscr{E}(n))\alpha_{i}-\textrm{par}h^{0}(\mathscr{E}_{i}(n))\alpha)+\delta\mu(\mathscr{E}_{\bullet},\underline{m},\phi)\geq \\
\geq& \sum_{q=1}^{t}m_{j_{q}}(\textrm{par}\chi(\mathscr{E}(n))\alpha_{j_{q}}-\textrm{par}h^{0}(\mathscr{E}_{j_{q}}(n))\alpha)+\delta\mu(\mathscr{E}_{\bullet,1},\underline{m}_{1},\phi) + \\
&+(\sum_{q=1}^{s-t}m_{l_{q}})K-\delta(\sum_{q=1}^{s-t}m_{l_q})a(\alpha-1)\geq 0,
\end{align*}
and the result is proved.
\end{proof}

\subsubsection{$(\kappa,\delta)$-semistability and Hilbert-Mumford semistability}

The goal now is to prove Theorem \ref{melon3}, which shows that $(\underline{\kappa},\delta)$-(semi)stability is equivalent to GIT (semi) stability in the Gieseker space under some conditions.

Let $B:=-1+\alpha(\alpha+1)/2$ be the constant given in the proof of Theorem \ref{sectionalsemi} and let $K'$ be a constant such that $d+K'>0$ and with the property
\begin{equation}\label{konstante}
\alpha K'>\textrm{max}\biggl\{d(w-\alpha)+\alpha r\nu+a\delta(\alpha-1)
+B\alpha(\alpha-1)|w=1\hdots\alpha-1\biggr\},
\end{equation}
\begin{proposition}\label{lemacarra2}
There exists $n_{3}\in\mathbb{N}$ and a constant $C_{3}$ such that for every $n\geq n_{3}$ and for any triple $t=(q:U\otimes\mathscr{O}_{Y}(-n)\rightarrow \mathscr{E},\underline{q},\phi)$ of degree $d$ and multiplicity $\alpha$ whose induced map $U\rightarrow H^{0}(Y,\mathscr{E}(n))$ is injective and giving a semistable point in the Gieseker space, $\mathbb{G}^{(s)s}$,
$\mu_{\emph{max}}(\mathscr{E})\leq\mu(\mathscr{E})+C_{3}$.
\end{proposition}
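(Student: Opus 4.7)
The plan is the standard Hilbert--Mumford contrapositive: assuming that the maximal destabilizing subsheaf $\mathscr{E}' \subseteq \mathscr{E}$ of multiplicity $\alpha'$, with $1 \leq \alpha' \leq \alpha - 1$, satisfies $\mu(\mathscr{E}') > \mu(\mathscr{E}) + C_{3}$, I will construct a one-parameter subgroup of $\textrm{SL}(U)$ whose Hilbert--Mumford weight on $\textrm{Gies}(t)$ is strictly negative, contradicting the hypothesis that $t$ yields a semistable point of $\mathbb{G}$. Since $\mathscr{E}'$ is maximal destabilizing it is automatically semistable, which is the key structural input.

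First, I construct the test one-parameter subgroup. Set $U' := U \cap H^{0}(Y, \mathscr{E}'(n))$, write $l' := \dim U'$, and let $\mathscr{E}''$ denote the saturated subsheaf of $\mathscr{E}$ generated by the image of $q|_{U' \otimes \mathscr{O}_{Y}(-n)}$, of multiplicity $\alpha'' \leq \alpha'$. Take $\lambda$ to be the one-parameter subgroup of $\textrm{SL}(U)$ associated to the weighted flag $(0 \subset U' \subset U,\, m = 1)$. Gieseker semistability yields $\mu(\lambda, \textrm{Gies}(t)) \geq 0$, which, expanded via (\ref{semisemi}) and the polarisation (\ref{descriptionpolarization}), together with the crude bounds $\nu(I_{0}, l') \leq a$, $\dim q_{i}(\mathscr{E}''(y_{1}^{i}) \oplus \mathscr{E}''(y_{2}^{i})) \leq e_{i}$, and $\underline{\kappa}\textrm{-pardeg}(\mathscr{E}') \geq \deg(\mathscr{E}') - r\nu$, produces a numerical inequality in $l', \alpha'', p$ and $d$.

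Second, I bound $l'$ both from above and below. The injection $U \hookrightarrow H^{0}(Y, \mathscr{E}(n))$ combined with Riemann--Roch provides the lower bound $l' \geq \chi(\mathscr{E}'(n))$, whilst the Le Potier--Simpson estimate applied to the semistable sheaf $\mathscr{E}'$ (exactly as in the proof of Theorem \ref{sectionalsemi}) gives
\[
l' \,\leq\, h^{0}(Y, \mathscr{E}'(n)) \,\leq\, \alpha' \bigl( \mu(\mathscr{E}') + n + B \bigr),
\]
valid as soon as $n$ is large enough that $\mu(\mathscr{E}') + n + B > 0$; this is automatic because $\mu(\mathscr{E}') \geq \mu(\mathscr{E}) = d/\alpha$ is a fixed constant. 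Substituting these bounds into the inequality from the first step and using $p = \alpha n + r\chi(\mathscr{O}_{Y}) + d$, all terms linear in $n$ cancel---this cancellation being exactly what the polarisation (\ref{descriptionpolarization}) was engineered to produce---and, after dividing by $\alpha \alpha'$, the residue takes the shape
\[
\alpha\bigl(\mu(\mathscr{E}') - \mu(\mathscr{E})\bigr) \,\leq\, \frac{1}{\alpha'}\Bigl( d(\alpha'' - \alpha) + \alpha r\nu + a\delta(\alpha - 1) + B\alpha(\alpha-1)\Bigr).
\]
Setting $w := \alpha''$ and invoking the defining property (\ref{konstante}) of $K'$, the right-hand side is bounded above by $K'$. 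Taking $C_{3} := K'$ and $n_{3}$ large enough to ensure the Le Potier--Simpson regime applies uniformly, the claim follows.

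The main technical obstacle is the book-keeping of the polarisation-dependent constants in the first step: one must collect the swamp contribution (from the $c$-term) and the parabolic contributions (from the $k_{i}$-terms) so that, after the Le Potier--Simpson substitution, the $n$-dependence cancels cleanly and the remaining constants sum up \emph{precisely} to the combination appearing in (\ref{konstante}). A related subtlety is the uniform choice of $n_{3}$: this is only possible because the slope lower bound $d/\alpha$ is the same fixed constant for every allowed $(\mathscr{E}, \underline{q}, \phi)$, placing every destabilizing subsheaf simultaneously in the positive Le Potier--Simpson regime.
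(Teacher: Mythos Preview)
Your overall strategy---assume $\mu(\mathscr{E}')>\mu(\mathscr{E})+C_3$ for the maximal destabilizing subsheaf and produce a one-parameter subgroup with negative Hilbert--Mumford weight---is exactly the paper's, and the algebraic endpoint you reach matches the constant~(\ref{konstante}). The gap is in your lower bound for $l'=\dim U'$. The assertion ``the injection $U\hookrightarrow H^{0}(Y,\mathscr{E}(n))$ combined with Riemann--Roch provides $l'\geq\chi(\mathscr{E}'(n))$'' is not justified: from $U'=U\cap H^{0}(Y,\mathscr{E}'(n))$ one only gets $U/U'\hookrightarrow H^{0}(Y,\mathscr{Q}(n))$ with $\mathscr{Q}=\mathscr{E}/\mathscr{E}'$, hence $l'\geq p-h^{0}(Y,\mathscr{Q}(n))$. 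To convert this into $l'\geq\chi(\mathscr{E}'(n))$ you would need $h^{1}(Y,\mathscr{Q}(n))=0$, which you have no control over, since $\mu(\mathscr{Q})$ can be arbitrarily negative under your contradiction hypothesis. Since $l'$ enters the inequality~(\ref{menudotruno}) with coefficient $-\alpha$, it is precisely a \emph{lower} bound that is needed to force negativity, and yours is missing its justification.

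The paper closes this gap by applying Le~Potier--Simpson to the \emph{quotient} $\mathscr{Q}$ rather than to the subsheaf: one has $h^{0}(Y,\mathscr{Q}(n))\leq\alpha''[\mu''+n+B]_{+}$, and together with $l'\geq p-h^{0}(Y,\mathscr{Q}(n))$ this yields the required lower bound (with a residual case split according to whether $\mu''+n+B$ is positive or not). Your Le~Potier--Simpson estimate on $\mathscr{E}'$ gives only an \emph{upper} bound $l'\leq\alpha'(\mu(\mathscr{E}')+n+B)$, which goes the wrong way and plays no role in the argument. Once you redirect LPS from $\mathscr{E}'$ to $\mathscr{Q}$, the rest of your computation (cancellation of the $n$-terms, identification of the residue with~(\ref{konstante})) goes through as written.
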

\begin{proof} 
It is enough to show that $\textrm{deg}(\mathscr{E}')<d+K'$ for the maximal destabilizing subsheaf, since in such case we would have
$\mu(\mathscr{E}'')\leq\mu(\mathscr{E}')<\dfrac{d+K'}{\alpha(\mathscr{E}')}\leq d+K'\leq \mu(\mathscr{E})+C_3$ for every subsheaf $\mathscr{E}''\subset \mathscr{E}$, where $C_3:=\mu(\mathscr{E})(\alpha-1)+K'$.

Let $\mathscr{Q}:=\mathscr{E}/\mathscr{E}'$ be the (semistable) quotient locally free sheaf. Let us use the notation $\alpha':=\alpha(\mathscr{E}'),\alpha'':=\alpha(\mathscr{Q}), d':=\textrm{deg}(\mathscr{E}'), d'':=\textrm{deg}(\mathscr{Q}), \mu':=\mu(\mathscr{E}')$ and $\mu'':=\mu(\mathscr{Q})$. Assume that $d'\geq d+K'$ and and let us show that we get a contradiction. For all $n\in\mathbb{N}$ we have
$h^{0}(Y,\mathscr{Q}(n))\leq\alpha''[\mu''+n+B]_{+}$.
Then we have to study two different cases.\\
Consider the first case, $h^{0}(Y,\mathscr{Q}(n))\leq\alpha''(\mu''+n+B)$. Set $U':=H^{0}(Y,\mathscr{E}'(n))\cap U$. Then we have,
\begin{equation*}
\begin{split}
\textrm{dim}(U')&\geq p-h^{0}(Y,\mathscr{Q}(n))\geq 
\alpha(\dfrac{1-g}{h})+d+\alpha n-\alpha''(\mu''+n+B)\geq \\
&\geq\alpha(\dfrac{1-g}{h}+n)+d-d''-\alpha''(\dfrac{1-g}{h}+n)-\alpha''B\geq\\
&\geq \alpha'(\dfrac{1-g}{h}+n)+d+K'-B(\alpha-1).
\end{split}
\end{equation*}
Consider the locally free sheaf $\widehat{\mathscr{E}}:=\textrm{Im}(U'\otimes\mathscr{O}_{Y}(-n)\rightarrow\mathscr{E}_{t})$.
Thus, we have $U'\subset H^{0}(Y,\widehat{\mathscr{E}}(n))\cap U$ (see \cite[Lemma 3.3 ]{sols}, which also holds true in our case), $\textrm{rk}(\widehat{\mathscr{E}}|_{Y_{i}})\leq \textrm{rk}(\mathscr{E}'|_{Y_{i}})$ and $\widehat{\mathscr{E}}$ is generically generated by global sections. Let $\{u_{1},\hdots,u_{i}\}$ be a basis for $U'$ and complete it to a basis $\underline{u}=\{u_{1},\hdots,u_{p}\}$ of $U$. Let $\lambda=\lambda(\underline{u},\gamma_{p}^{(i)})$ be the associated one parameter subgroup. Then we clearly have that 
$\mu_{\mathbb{G}_{1}^{i}}(\lambda,i_{1,i}(t))=p \textrm{rk}(\widehat{\mathscr{E}}|_{Y_{i}})-r\textrm{dim}(U')
\leq p \textrm{rk}(\mathscr{E}'|_{Y_{i}})-r \textrm{dim}(U').$
Since $\nu(I,i)\leq a$, we also have
$\mu_{\mathbb{G}_{2}}(\lambda,i_{2}(t))\leq a(p-\textrm{dim}(U'))$.
Therefore,
\begin{align*}
\mu_{\mathbb{G}}(\lambda,\textrm{Gies}(t))=& \sum_{i=1}^{l}b_{i}\mu_{\mathbb{G}_{1}^{i}} (\lambda,i_{1,i}(t))+ c\mu_{\mathbb{G}_{2}}(\lambda,i_{2}(t))+\\
&+\sum_{i=1}^{\nu}k_{i}(p \textrm{dim}(q_{i}(\widehat{\mathscr{E}}(y^{i}_{1})\oplus\widehat{\mathscr{E}}(y^{i}_{2})))-e_{i} \textrm{dim}(U'))\leq\\
\leq
&\sum_{i=1}^{l}d_{i}(p-a\delta-r(\sum_{i\in I(\underline{e})}\kappa_{i}))(p \textrm{rk}(\mathscr{E}'|_{Y_{i}}) - r \textrm{dim}(U'))+\\
&+\sum_{i=1}^{l}d_{i}\delta ra(p-\textrm{dim}(U')) +\\
&+\sum_{i\in I(\underline{e})}\kappa_{i}\dfrac{r}{e_i}\alpha(p \textrm{dim}(q_{i}(\mathscr{E}'(y^{i}_{1})\oplus\mathscr{E}'(y^{i}_{2}))))-r h\textrm{dim}(U')).
\end{align*}

An easy calculation give us
\begin{equation}\label{menudotruno}
\begin{split}
\dfrac{\mu_{\mathbb{G}}(\lambda,\textrm{Gies}(t))}{p}\leq&\alpha'(p-r(\sum_{i\in I(\underline{e})}\kappa_{i}))-\alpha\{\textrm{dim}(U')- \\ 
&-\sum_{i\in I(\underline{e})}\kappa_{i}\dfrac{r}{e_i}\alpha(\textrm{dim}(q_{i}(\mathscr{E}'(y^{i}_{1}) \oplus\mathscr{E}'(y^{i}_{2}))))\}+a\delta(\alpha- \alpha').
\end{split}
\end{equation}
Since $p=\alpha(n+\frac{1-g}{h})+d$ and $\textrm{dim}(U')\geq d+K'+\alpha' (n+\frac{1+g}{h})-B(\alpha-1)$,
we deduce that,
\begin{align*}
\dfrac{\mu_{\mathbb{G}}(\lambda,\textrm{Gies}(t))}{p}\leq &a\delta(\alpha-\alpha')-\alpha K'+B\alpha(\alpha-1) -r\alpha'(\sum_{i\in I(\underline{e})}\kappa_{i})+\\
&+ \alpha(\sum_{i\in I(\underline{e})}\kappa_{i}\dfrac{r}{e_i} \textrm{dim}(q_{i}(\mathscr{E}'(y^{i}_{1}) \oplus\mathscr{E}'(y^{i}_{2}))))+d(\alpha'-\alpha).
\end{align*}
Since $\alpha'r(\sum_{i\in I(\underline{e})}\kappa_{i})>0$, 
$\alpha\sum_{i\in I(\underline{e})}\kappa_{i} \dfrac{r}{e_i}\textrm{dim}(q_{i}(\mathscr{E}'(y^{i}_{1}) \oplus\mathscr{E}'(y^{i}_{2})))< \alpha\nu r$ (because $\kappa_{i}<\dfrac{e_i}{r}$) and
$\alpha-\alpha'<\alpha-1$,
we get
$\mu_{\mathbb{G}}(\lambda,\textrm{Gies}(t))<0$.
However $\textrm{Gies}(t)$ is semistable so we get a contradiction.\\
Consider now the second case, $h^{0}(Y,\mathscr{Q}(n))=0$. Assuming $n>\dfrac{g-1}{h}$, we have $\textrm{dim}(U')=p$.
The same calculation as before (see Equation (\ref{menudotruno})) shows that
\begin{align*}
\dfrac{\mu_{\mathbb{G}}(\lambda,\textrm{Gies}(t))}{p}\leq& \ \alpha'(p-r(\sum_{i\in I(\underline{e})}\kappa_{i}))-\alpha\{\textrm{dim}(U')- \\ 
&-\sum_{i\in I(\underline{e})}\kappa_{i}\dfrac{r}{e_i}\textrm{dim}(q_{i}(\mathscr{E}'(y^{i}_{1}) \oplus\mathscr{E}'(y^{i}_{2})))\}+a\delta(\alpha- \alpha')\leq \\
\leq& \ (\alpha'-\alpha)(p-a\delta)+\alpha\nu r.
\end{align*}
Assume $n$ is large enough so that $p-a\delta>\dfrac{-\alpha\nu r}{\alpha'-\alpha}$ (recall that $p=r\chi(\mathscr{O}_{Y})+ d+\alpha n$). Then, $\mu_{\mathbb{G}}(\lambda,\textrm{Gies}(t))< 0$ and we get again a contradiction.
\end{proof}
\begin{theorem}\label{melon3}
There exists $n_{4}\in\mathbb{N}$ such that for every $n\geq n_{4}$, $(\mathscr{E}_{t},\underline{q}_{t},\tau_{t})$ is $(\underline{\kappa},\delta)$-(semi) stable if and only if $t\in \emph{Gies}^{-1}(\mathbb{G}^{(s)s})$.
\end{theorem}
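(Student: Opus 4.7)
The strategy is to compare the GIT weight function $\mu(\lambda,\textrm{Gies}(t))$ in the Gieseker space directly with the abstract semistability expression $P_{\underline{\kappa}}(\mathscr{E}_\bullet,\underline{m})+\delta\mu(\mathscr{E}_\bullet,\underline{m},\phi)$, using the explicit formula derived in Subsection \ref{ovni}. First I would choose $n_4 \geq \max\{n_0,n_2,n_3\}$ so that Proposition \ref{boundparabolicSwamp2}, Theorem \ref{sectionalsemi} and Proposition \ref{lemacarra2} all apply simultaneously. The key observation is that for any one-parameter subgroup $\lambda$ with associated weighted flag $(U_\bullet,\underline{m})$, the induced filtration $\mathscr{E}_u := q(U_u\otimes\mathscr{O}_Y(-n))$ satisfies $\dim U_u \leq h^0(Y,\mathscr{E}_u(n))$, with equality precisely when $U_u = H^0(Y,\mathscr{E}_u(n))$; when additionally $h^1(Y,\mathscr{E}_u(n)) = 0$ holds for every $u$, the calculation of Subsection \ref{ovni} gives the exact identity
\[
\frac{\mu(\lambda,\textrm{Gies}(t))}{p} = P_{\underline{\kappa}}(\mathscr{E}_\bullet,\underline{m})+\delta\mu(\mathscr{E}_\bullet,\underline{m},\phi).
\]

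For the implication $(\Leftarrow)$, given any weighted filtration $(\mathscr{E}_\bullet,\underline{m})$ of $\mathscr{E}$, Proposition \ref{lemacarra4} reduces the problem to those filtrations where each term satisfies $\textrm{par}\mu(\mathscr{E}_u) \geq \textrm{par}\mu(\mathscr{E}) - C_1$. These subsheaves range over a bounded family, so for $n\geq n_4$ we have $h^1(\mathscr{E}_u(n)) = 0$ and $\mathscr{E}_u(n)$ globally generated. Defining $U_u := H^0(Y,\mathscr{E}_u(n)) \subset U$ produces a weighted flag of $U$ satisfying the cohomological conditions, so the exact identity above transports $\mu(\lambda,\textrm{Gies}(t))(\geq)0$ (from GIT (semi)stability of $t$) into the required inequality $P_{\underline{\kappa}}(\mathscr{E}_\bullet,\underline{m})+\delta\mu(\mathscr{E}_\bullet,\underline{m},\phi)(\geq)0$.

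For the converse $(\Rightarrow)$, take any non-trivial one-parameter subgroup $\lambda$ and set $\mathscr{E}_u := q(U_u\otimes\mathscr{O}_Y(-n))$, without assuming any vanishing. The unconditional form of the weight function from Subsection \ref{ovni} carries $\dim U_u$ in place of $h^0(\mathscr{E}_u(n))$; combining with $\dim U_u \leq h^0(\mathscr{E}_u(n))$ I would obtain
\[
\frac{\mu(\lambda,\textrm{Gies}(t))}{p} \geq \sum_u m_u\bigl(\textrm{par}\chi(\mathscr{E}(n))\alpha_u - \textrm{par}h^0(\mathscr{E}_u(n))\alpha\bigr) + \delta\mu(\mathscr{E}_\bullet,\underline{m},\phi),
\]
and the right-hand side is $(\geq) 0$ directly by Theorem \ref{sectionalsemi}.

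\textbf{Main obstacle.} The delicate point is the simultaneous tuning of $n_4$: the constants governing boundedness of the semistable locus, cohomology vanishing on the relevant subfamilies of subsheaves, the choice of $C_2$ inside Theorem \ref{sectionalsemi}, and the slope bound of Proposition \ref{lemacarra2} must all be compatible for a single value of $n$. For the strict (stable) case the extra task is to verify that for a non-trivial filtration at least one of the two inequalities used in $(\Rightarrow)$ becomes strict: either $U_u \subsetneq H^0(\mathscr{E}_u(n))$ for some $u$, yielding strictness in the comparison step, or $U_u = H^0(\mathscr{E}_u(n))$ for every $u$, in which case the sectional inequality from Theorem \ref{sectionalsemi} is itself strict by its stable version; an analogous dichotomy handles strict inequality in $(\Leftarrow)$.
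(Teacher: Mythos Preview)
Your proposal is correct and follows essentially the same approach as the paper: reduction via Proposition~\ref{lemacarra4} plus boundedness of the relevant subsheaves for $(\Leftarrow)$, and Theorem~\ref{sectionalsemi} together with $\dim U_u\leq h^0(\mathscr{E}_u(n))$ for $(\Rightarrow)$. The only points the paper makes more explicit than your sketch are (i) that the boundedness of the subsheaves in $(\Leftarrow)$ relies on the upper bound $\mu_{\max}(\mathscr{E}_t)\leq d/\alpha+C_3$ supplied by Proposition~\ref{lemacarra2} (applicable because $\textrm{Gies}(t)$ is GIT-semistable), and (ii) that the Gieseker weight from Example~2 is computed on the \emph{saturated} subsheaf $\widehat{\mathscr{E}}_u$ generated by $U_u$, so one closes the argument with $\alpha(\widehat{\mathscr{E}}_u)=\alpha(\mathscr{E}_u)$ and $\underline{\kappa}\textrm{-pardeg}(\widehat{\mathscr{E}}_u)\geq\underline{\kappa}\textrm{-pardeg}(\mathscr{E}_u)$.
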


\begin{proof}
1) From the construction of the parameter space, we know that $q_{t}$ induces an isomorphism $U\simeq H^{0}(Y,\mathscr{E}_{t}(n))$.
Then, by Proposition \ref{lemacarra2}, $\textrm{Gies}(t)\in\mathbb{G}^{\delta-(s)s}$ implies $\mu_{\textrm{max}}(\mathscr{E}_{t})\leq\frac{\textrm{deg}(\mathscr{E})}{\alpha}+C_{3}$. We also know, by Proposition \ref{lemacarra4}, that $(\mathscr{E}_{t},\underline{q}_{t},\phi_{t})$ is $(\underline{\kappa,}\delta)$-(semi)stable if and only if
$P_{\underline{\kappa}}(\mathscr{E}_{\bullet},\underline{m})+ \delta\mu(\mathscr{E}_{\bullet},\underline{m},\phi)(\geq) 0$
for every $(\mathscr{E}_{\bullet},\underline{m})$ with $\textrm{par}\mu(\mathscr{E}_{j})\geq \textrm{par}\mu(\mathscr{E})-C_{1}$. Observe that, in this case,
$\mu(\mathscr{E}_{j})>\textrm{par}\mu(\mathscr{E}_{j})\geq \textrm{par}\mu(\mathscr{E})-C_{1}\geq \mu(\mathscr{E})-\frac{\nu}{h}-C_{1}$.
Denote $\overline{C}_{1}=\frac{\nu}{h}+C_{1}$. Consider the family of locally free sheaves satisfying
$\textrm{a) }\mu_{\textrm{max}}(\mathscr{E}')\leq\frac{\textrm{deg}(\mathscr{E})}{\alpha}+C_{3},
\textrm{b) }\textrm{par}\mu(\mathscr{E}')\geq \textrm{par}\mu(\mathscr{E})-C_{1}$ and
$\textrm{ c) }1\leq\alpha'\leq\alpha-1.$
This family is clearly bounded.
Therefore, there is a  natural number, $n\in\mathbb{N}$, large enough such that $\mathscr{E}'(n)$ is globally generated and $h^{1}(Y,\mathscr{E}'(n))=0$ for any $\mathscr{E}'$ of this family. 

Now, fix a weighted filtration $(\mathscr{E}_{\bullet},\underline{m})$ of $\mathscr{E}_{t}$ satisfying conditions a), b) and c). Let $\underline{u}=\{u_{1},\hdots,u_{p}\}$ be a basis of $U$, such that there are indices $l_{1},\hdots,l_{s}$ with $U^{(l_{j})}:=\langle u_{1},\hdots,u_{l_{j}}\rangle\simeq H^{0}(Y,\mathscr{E}_{j}(n))$ for each $j$. Define $\underline{\gamma}=\sum_{j=1}^{s}\alpha_{j}\gamma_{p} ^{(l_{j})}$ and consider the one parameter subgroup, $\lambda(\underline{u},\gamma_{p}^{(l_{j})})$ .
Let $I_{0}$ be a multiindex giving the minimum in $\mu_{\mathbb{G}_{2}}(\lambda(\underline{u},\underline{\gamma}))$. Then $\mu_{\mathbb{G}}(\lambda(\underline{u},\gamma),\textrm{Gies}(t))(\geq 0)$ if and only if $\mu_{\mathbb{G}}(\lambda(\underline{u},\gamma),\textrm{Gies}(t))/p \ (\geq 0)$. But looking at the calculations at the beginning of Section \ref{ovni}, we have
\begin{align*}
0&(\leq)\dfrac{\mu_{\mathbb{G}}(\lambda(\underline{u},\gamma),\textrm{Gies}(t))}{p}=\\
&=\sum_{u=1}^{s} m_{u}\lbrace (\widehat{\alpha}_{u}\underline{\kappa}\textrm{-pardeg}(\mathscr{E}) - \alpha \underline{\kappa}\textrm{-pardeg}(\widehat{\mathscr{E}}_{u}))+  \delta(\alpha\nu(I_{0},l_{u})-a\widehat{\alpha}_{u})\rbrace,
\end{align*}
$\widehat{\mathscr{E}_{i}}$ being the saturated subsheaf generated by $\mathscr{E}_{i}$. Finally, since $\widehat{\alpha_{i}}:=\alpha(\widehat{\mathscr{E}_{i}})=\alpha_{i}$ and $\underline{\kappa}\textrm{-pardeg}(\widehat{\mathscr{E}}_{i})\geq \underline{\kappa}\textrm{-pardeg}(\mathscr{E}_{i})$, we get
\begin{align*}
0(\leq&)\dfrac{\mu_{\mathbb{G}}(\lambda(\underline{u},\gamma),\textrm{Gies}(t))}{p}=\\
=&\sum_{u=1}^{s} m_{u}\lbrace (\alpha{\mathscr{E}}_{u}\underline{\kappa}\textrm{-pardeg}(\mathscr{E}) - \alpha \underline{\kappa}\textrm{-pardeg}(\widehat{\mathscr{E}}_{u}))+  \delta(\alpha\nu(I_{0},l_{u})-a\widehat{\alpha}_{u})\rbrace \leq \\
\leq&\sum_{u=1}^{s} m_{u}\lbrace (\alpha_{u}\underline{\kappa}\textrm{-pardeg}(\mathscr{E}) - \alpha \underline{\kappa}\textrm{-pardeg}(\mathscr{E}_{u}))+  \delta(\alpha\nu(I_{0},l_{u})-a\alpha_{u})\rbrace= \\
=& P_{\underline{\kappa}}(\mathscr{E}_{\bullet},\underline{m})+\delta\mu(\mathscr{E}_{\bullet},\underline{m},\tau).
\end{align*}
Thus, the swamp is $(\underline{\kappa},\delta)$-semistable.

2) By Theorem \ref{sectionalsemi} we deduce that
\begin{equation}\label{oleole}
\sum_{i=1}^{s}m_{i}(\textrm{par}\chi(\mathscr{E}(n))\alpha_{i}-\textrm{par}h^{0}(\mathscr{E}_{i}(n))\alpha)+\delta\mu(\mathscr{E}_{\bullet},\underline{m},\phi)(\geq)0
\end{equation}
for any weighted filtration $(\mathscr{E}_{\bullet},\underline{m})$ of $\mathscr{E}_{t}$. Let $\lambda$ be a one parameter subgroup and $(U_{\bullet},\underline{m}')$ a weighted filtration such that $\lambda=\lambda(U_{\bullet},\underline{m}')$.
This filtration together with the quotient $q_{t}:U\otimes\mathscr{O}_{Y_{t}}(-n)\rightarrow\mathscr{E}_{t}$ induces a chain
\begin{equation}\label{chano}
(0)\subseteq\mathscr{E}'_{1}\subseteq\hdots\subseteq\mathscr{E}'_{s'}\subseteq\mathscr{E}_{t}
\end{equation}
and, therefore, a filtration
$\mathscr{E}_{\bullet}\equiv (0)\subset\mathscr{E}_{1}\subset\hdots\subset\mathscr{E}_{s}\subset\mathscr{E}_{t}$,
formed by the different subsheaves collected in the above chain.
Let $J=(i_{1},\hdots,i_{s})$ be the multiindex defined by the following condition: $i_{j}\in\{1,\hdots,s'\}$ is the maximum index among those $k\in\{1,\hdots,s'\}$ such that $\mathscr{E}_{j}=\mathscr{E}'_{k}$.  Denote by $m_{j}$ the sum of the numbers $m'_{k}$ corresponding to those sheaves in the chain (\ref{chano}) which are equal to $\mathscr{E}_{i}$, i.e.,
$
m_{j}=m_{k}+m_{k+1}+\hdots+m_{i_{j}},
$
$(k,k+1,\hdots,i_{j})$ being the indices such that $\mathscr{E}'_{k}=\mathscr{E}'_{k+1}=\hdots=\mathscr{E}'_{i_{j}}=\mathscr{E}_{j}$. We get in this way a weighted filtration $(\mathscr{E}_{\bullet},\underline{m})$. Multiplying by $p$ in Equation (\ref{oleole}) we get
\begin{align*}
0\leq\sum_{i=1}^{s}&m_{i}\bigg\{p^{2}\alpha_{i}-ph^{0}(Y,\mathscr{E}_{i}(n))\alpha+\delta p(\alpha\nu_{i}(I_{0})-a\alpha_{i})+ \\
+&p\sum_{j\in I(\underline{e})}\kappa_{j}\dfrac{r}{e_j}\textrm{dim}(q_{j}(\mathscr{E}_{i}(y^{j}_{1})\oplus\mathscr{E}_{i}(y_{2}^{j})))\alpha-rp(\sum_{j\in I(\underline{e})}\kappa_{j})\alpha_{i}\bigg\}.
\end{align*}
The inverse calculation presented in Subsection \ref{ovni} gives
\begin{equation}\label{pandereta}
\begin{split}
0\leq &\sum_{u=1}^{l}b_{u}\sum_{i=1}^{s}m_{i} (\textrm{rk}(\mathscr{E}^{u}_{i})p-rh^{0}(Y,\mathscr{E}_{i}(n)))+ \\
&+c\sum_{i=1}^{s}m_{i} (\nu_{i}(I_{0})p-ah^{0}(Y,\mathscr{E}_{i}(n))) + \\
&+\sum_{j\in I(\underline{e})}k_{j}\sum_{i=1}^{s} m_{i} (p \textrm{dim}(q_{j}(\mathscr{E}_{i}(y_{1}^{j})\oplus\mathscr{E}_{i}(y_{2}^{j})))-e_{j}h^{0}(Y,\mathscr{E}_{i}(n))).
\end{split}
\end{equation}
Since $l_{i}:=\textrm{dim} U_{i}\leq h^{0}(Y,\mathscr{E}_{i}(n))$, Equation (\ref{pandereta}) turns into
\begin{equation*}
\begin{split}
0\leq &\sum_{u=1}^{l}b_{u}\sum_{i=1}^{s'}m'_{i} (\textrm{rk}(\mathscr{E}^{u}_{i})p-rl_{i})+ \\
&+c\sum_{i=1}^{s'}m'_{i} (\nu_{i}(I_{0})p-al_{i}) + \\
&+\sum_{j\in I(\underline{e})}k_{j}\sum_{i=1}^{s'} m'_{i} (p \textrm{dim}(q_{j}(\mathscr{E}_{i}(y_{1}^{j})\oplus\mathscr{E}_{i}(y_{2}^{j})))-e_{j}l_{i})=\\
=&\mu_{\mathbb{G}}(\lambda(U_{\bullet},\underline{m}'),\textrm{Gies}(t)),
\end{split}
\end{equation*}
and the proposition is proved.
\end{proof}


\subsection{The moduli space}

The last step before proving the existence of the moduli space consists in showing that the restriction of the Gieseker map to the $(\underline{\kappa},\delta)$-semiststable locus is proper.

\begin{proposition}\label{propergieseker} 
There exists $n$ large enough such that the Gieseker morphism,
$\emph{Gies}:\mathfrak{I}_{\underline{d}}^{(\underline{\kappa},\delta)\emph{-(s)s}}\rightarrow\mathbb{G}^{\emph{s(s)}}$ ,
is proper for any $\underline{d}\in I_{r,d,\delta}$. 
\end{proposition}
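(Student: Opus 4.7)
The plan is to verify the valuative criterion of properness. Let $R$ be a discrete valuation ring with fraction field $K$, and consider a commutative square consisting of a generic morphism $f_K\colon\textrm{Spec}(K)\to\mathfrak{I}_{\underline{d}}^{(\underline{\kappa},\delta)\textrm{-(s)s}}$ together with an extension $g_R\colon\textrm{Spec}(R)\to\mathbb{G}^{(s)s}$ of $\textrm{Gies}\circ f_K$. Uniqueness of a diagonal filling is immediate from the injectivity of $\textrm{Gies}$ (Proposition \ref{hastalaminga100}), so the task reduces to producing an $R$-point $\widetilde{f}_R$ of $\mathfrak{I}_{\underline{d}}^{(\underline{\kappa},\delta)\textrm{-(s)s}}$ extending $f_K$.

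The datum $f_K$ corresponds to a $(\underline{\kappa},\delta)$-semistable family $(\mathscr{E}_K,\underline{q}_K,\phi_K)$ on $Y_K$ together with a surjection $q_K\colon U\otimes\mathscr{O}_{Y_K}(-n)\twoheadrightarrow\mathscr{E}_K$ inducing an isomorphism on global sections. Since the relative Quot scheme parametrizing quotients of $U\otimes\mathscr{O}_{Y_R}(-n)$ with the prescribed Hilbert polynomial is projective over $R$, the $K$-point determined by $q_K$ extends, after a ramified base change on $R$ if necessary (allowed by the valuative criterion), to an $R$-point, yielding a quotient $q_R\colon U\otimes\mathscr{O}_{Y_R}(-n)\twoheadrightarrow\mathscr{E}_R$ with $\mathscr{E}_R$ flat over $R$ and locally free on each fibre since $Y$ is smooth. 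The same projectivity argument, applied to the Grassmannians $\mathcal{G}r_i$ and to the projective bundle $\mathfrak{h}=\mathbb{P}(\mathcal{H}^{\vee})$ of Subsection \ref{parameterparten}, extends the parabolic quotients $\underline{q}_K$ and the swamp $\phi_K$ to $\underline{q}_R$ and $\phi_R$ on $Y_R$. The closed conditions cutting out $\mathfrak{I}_{\underline{d}}$ inside the ambient product of projective schemes (factorization of $s_{\mathfrak{h}}$ through $\phi_R$ and diagonal compatibility of the parabolic quotients) hold generically and hence on the whole of $\textrm{Spec}(R)$ by closedness, so one obtains $\widetilde{f}_R\colon\textrm{Spec}(R)\to\mathfrak{I}_{\underline{d}}$ with $\textrm{Gies}\circ\widetilde{f}_R=g_R$ by separatedness of $\mathbb{G}$.

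It then remains to check that $\widetilde{f}_R$ factors through the $(\underline{\kappa},\delta)$-semistable locus. But by hypothesis $g_R$ lands in $\mathbb{G}^{(s)s}$, so the central fibre of $\textrm{Gies}\circ\widetilde{f}_R$ is Hilbert--Mumford semistable, and Theorem \ref{melon3} (taking $n\geq n_{4}$) yields the $(\underline{\kappa},\delta)$-semistability of the special fibre of $(\mathscr{E}_R,\underline{q}_R,\phi_R)$. The main obstacle is ensuring that the extension genuinely lies in the parameter space $\mathfrak{I}_{\underline{d}}$ and not merely in its projective closure; concretely, one must verify that at the special fibre the induced map $U\to H^0(Y_k,\mathscr{E}_k(n))$ remains an isomorphism and that $\phi_R$ restricts to a non-zero morphism there. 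For the first condition, one enlarges $n$ using the boundedness of Proposition \ref{boundparabolicSwamp2} so that $h^{1}(Y,\mathscr{E}(n))=0$ for every $\mathscr{E}$ in the relevant bounded family, whereupon equality of Hilbert polynomials forces the canonical map on global sections to be an isomorphism; for the second, non-vanishing of $\phi_R$ on the special fibre follows from the GIT semistability of its Gieseker image, since a vanishing would be detected by a destabilizing one-parameter subgroup contradicting $g_R\in\mathbb{G}^{(s)s}$.
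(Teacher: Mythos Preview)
Your overall strategy (valuative criterion, extend via projective closures, then invoke Theorem \ref{melon3}) matches the paper's, but two of the key steps are either false or circular.

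First, the assertion that $\mathscr{E}_R$ is ``locally free on each fibre since $Y$ is smooth'' is simply wrong: the Quot scheme parametrizes all flat quotients of $U\otimes\mathscr{O}_{Y_R}(-n)$ with the given Hilbert polynomial, and the special fibre $\mathscr{E}_k$ may well acquire torsion. Smoothness of $Y$ tells you that torsion-free implies locally free, but it does not tell you the quotient is torsion-free. In the paper this is handled by a direct Hilbert--Mumford computation: one takes the one-parameter subgroup associated to the subspace $H=H^{0}(q_{(0)}(n))^{-1}(T)\subset U$ coming from a putative torsion subsheaf $\mathcal{T}\subset\mathscr{E}_{(0)}$, and the semistability of $g_R(0)\in\mathbb{G}^{(s)s}$ together with the strict inequality $\kappa_i<e_i/r$ forces $\mathcal{T}=0$. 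This is where the choice of $\kappa_i$ in the open interval $(0,e_i/r)$ actually enters.

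Second, your argument for the isomorphism $U\simeq H^{0}(Y_k,\mathscr{E}_k(n))$ is circular. You invoke the boundedness of Proposition \ref{boundparabolicSwamp2}, but that proposition bounds sheaves appearing in $(\underline{\kappa},\delta)$-semistable generalized parabolic swamps, and you do not yet know that the special fibre is semistable; Theorem \ref{melon3} only applies to points already lying in $\mathfrak{I}_{\underline{d}}$, which in turn requires the isomorphism you are trying to establish. Note also that the condition ``$U\to H^{0}(Y,\mathscr{E}(n))$ is an isomorphism'' defining $\mathcal{Q}^{0}$ is open, not closed, so your closedness argument does not touch it. The paper instead derives injectivity and $h^{1}(\mathscr{E}_{(0)}(n))=0$ directly from GIT semistability of $g_R(0)$: injectivity via the one-parameter subgroup attached to the kernel $H\subset U$, and surjectivity by bounding $h^{0}$ of the image of a hypothetical non-zero map $\mathscr{E}_{(0)}(n)\to\omega_Y$ against $h^{0}(\omega_Y)$ for $n$ large.
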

\begin{proof}
For the sake of notation we drop the subindex $\underline{d}$. We use the the valuative criterion for properness. Let $(\mathscr{O},\mathfrak{m},k)$ be a DVR, $K$ being its field of fractions and assume we have a conmutative diagram
$$
\xymatrix{
&\textrm{Spec}(K)\ar[r]^{h_{K}}\ar[d] &\mathfrak{I}_{\underline{d}}^{(\underline{\kappa},\delta)\textrm{-(s)s}}\ar[d] \\
\{0,\eta\}=S:\ar@{=}[r]&\textrm{Spec}(\mathscr{O})\ar[r]^{h} & \mathbb{G}^{(s)s}.
}
$$
The morphism $h_{K}$ is given by a family $(q_{K},\underline{q}_{K},\phi_{K})$ over $Y_{K}:=Y\times \textrm{Spec}(K)$, where
\begin{equation}\label{familyK}
\begin{split}
q_{K}:&U\otimes\mathscr{O}_{Y_{K}}(-n)\twoheadrightarrow\mathscr{E}_{K}\\
\phi_{K}:& (\mathscr{E}_{K}^{\otimes a})^{\oplus b}\rightarrow \textrm{det}(\mathscr{E}_{K})^{\otimes c}\otimes \mathscr{L}_{K}\\
q_{iK}:&\Gamma(\mathscr{E}_{K}|_{y^{i}_{1},y^{i}_{2}})\rightarrow R_{K}
\end{split}
\end{equation}

Let us see that $h_{K}$ can be extended to a family, $\widehat{h}=(q_{S},\phi_{S},\underline{q}_{S})$, over $Y\times S$. The quotient $q_{K}$ defines a point in the Quot scheme of quotients of $U\otimes\mathscr{O}_{Y}(-n)$ with the fixed Hilbert polynomial $P(n)$. Therefore, there exists a (unique) flat extension 
\begin{equation}\label{quotient}
q_{S}:U\otimes\pi^{*}\mathscr{O}_{Y}(-n)\twoheadrightarrow \mathscr{E}_{S}
\end{equation}
over $Y\times S$. Define now the sheaves $\mathscr{M}:=\pi_{S*}(\textrm{det}(\mathscr{E}_{S})^{\otimes c}\otimes\pi_{Y}^{*}\Lcc\otimes\pi_{Y}^{*}\mathscr{O}_{Y}(an))$ and $\mathscr{G}=\pi_{S*}((U^{\otimes a})^{\oplus b}\otimes\pi_{Y}^{*}\mathscr{O}_{Y})$.
Both sheaves are locally free, so we can form the projective space over $S$,
$
\textrm{pr}_{S}:\mathbb{P}:=\mathbf{P}(\textrm{Hom}_{\OO}(\mathscr{G},\Hh)^{\vee})\rightarrow S,
$
which carries a tautological morphism over $\mathbb{P}\times Y$,
$$
\textrm{pr}_{\mathbb{P}}^{*}\textrm{pr}_{\mathbb{P}*}((U^{\otimes a})^{\oplus b}\otimes\textrm{pr}_{Y}^{*}\mathscr{O}_{Y})\rightarrow(\textrm{id}_{Y}\times \textrm{pr}_{S})^{*}\textrm{det}(\mathscr{E}_{S})^{\otimes c}\otimes \textrm{pr}_{Y}^{*}\mathscr{O}_{Y}(an)\otimes\textrm{pr}_{Y}^{*}\Lcc\otimes \textrm{pr}_{\mathbb{P}}^{*}\mathscr{O}_{\mathbb{P}}(1)
$$
Now, the canonical morphism $\Delta:\textrm{pr}_{\mathbb{P}}^{*}\textrm{pr}_{\mathbb{P}*}((U^{\otimes a})^{\oplus b}\otimes\pi_{Y}^{*}\mathscr{O}_{Y})\rightarrow (U^{\otimes a})^{\oplus b}\otimes\pi_{Y}^{*}\mathscr{O}_{Y}$  induces a diagram
$$
\xymatrix{
K\ar[r] \ar@{..>}[rd]_{g}& \textrm{pr}_{\mathbb{P}}^{*}\textrm{pr}_{\mathbb{P}*}((U^{\otimes a})^{\oplus b}\otimes\textrm{pr}_{Y}^{*}\mathscr{O}_{Y})\ar[r]\ar[d] & (\textrm{id}_{Y}\times\textrm{pr}_{S})^{*}(\mathscr{E}_{S}(n)^{\otimes a})^{\oplus b}\\
&\mathcal{H}'& ,
}
$$
where $\mathcal{H}'=(\textrm{id}_{Y}\times \textrm{pr}_{S})^{*}\textrm{det}(\mathscr{E}_{S})^{\otimes c}\otimes \textrm{pr}_{Y}^{*}\mathscr{O}_{Y}(an)\otimes\textrm{pr}_{Y}\Lcc\otimes \textrm{pr}_{\mathbb{P}}^{*}\mathscr{O}_{\mathbb{P}}(1)$.
Let $\mathbb{S}\subset\mathbb{P}$ be the closed subscheme over which $g$ is the zero morphism, i.e., over which the tautological morphism factorizes through $(\textrm{id}_{Y}\times\textrm{pr}_{S})^{*}(\mathscr{E}_{S}(n)^{\otimes a})^{\oplus b}$. Thus, we have over $\mathbb{S}\times Y$ a morphism
$
(\textrm{id}_{Y}\times\textrm{pr}_{S})^{*}(\mathscr{E}_{S}^{\otimes a})^{\oplus b}\rightarrow (\textrm{id}_{Y}\times \textrm{pr}_{S})^{*}\textrm{det}(\mathscr{E}_{S})^{\otimes c}\otimes\textrm{pr}_{Y}\Lcc\otimes \textrm{pr}_{\mathbb{P}}^{*}\mathscr{O}_{\mathbb{P}}(1).
$ 
Note now that the morphism $\phi_{K}:(\mathscr{E}_{K}^{\otimes a})^{\oplus b}\rightarrow \textrm{det}(\mathscr{E}_{K})^{\otimes c}\otimes \mathscr{L}_{K}$ defines a point $\textrm{Spec}(K)\rightarrow\mathbb{S}$. Since $\mathbb{S}$ is projective this point extends (uniquely) to a point $\textrm{Spec}(\OO)\rightarrow\mathbb{S}$, i.e., to a morphism 
\begin{equation}\label{Swamp}
\phi_{S}:(\mathscr{E}_{S}^{\otimes a})^{\otimes b}\rightarrow \textrm{det}(\mathscr{E}_{S})^{\otimes c}\otimes \pi_{Y}^{*}\Lcc\otimes \mathcal{N}
\end{equation}
Let us extend now the parabolic structure. Since $\mathscr{E}_{S,\eta}\simeq \mathscr{E}_{K}$ we have an isomorphism $\pi_{K*}(\mathscr{E}_{S,\eta}|_{D_{i}})\simeq \pi_{K*}(\mathscr{E}_{K}|_{D_{i}})$. Thus composing with $\pi_{K*}(\mathscr{E}_{K}|_{D_{i}})\twoheadrightarrow R_{K}$, we get a surjection
$
\pi_{K*}(\mathscr{E}_{S,\eta}|_{D_{i}})\twoheadrightarrow R_{K}.
$
Observe that the morphism $\pi_{S}:D_{i}\times S\rightarrow S$ is finite, thus affine and proper. By flat base change, we know that
$
\pi_{K*}(\mathscr{E}_{S,\eta}|_{D_{i}})=j^{*}\pi_{S*}(\mathscr{E}_{S}|_{D_{i}}),
$
$j$ being the open embedding $j:\eta\hookrightarrow S$. Now, taking the push-forward and composing with the canonical map $\pi_{S*}(\mathscr{E}_{S}|_{D_{i}})\rightarrow  j_{*}j^{*}\pi_{S*}(\mathscr{E}_{S}|_{D_{i}})$, we get a morphism $\pi_{S*}(\mathscr{E}_{S}|_{D_{i}}) \rightarrow j_{*}R_{K}$.
Let $R_{S}\subset  j_{*}R_{K}$ be its image. Then by \cite[Proposition 2.8.1]{Groth1}, $R_{S}$ is $S$-flat (thus a free $\OO$-module) and the quotient 
\begin{equation}\label{parabolic}
q_{iS}:\pi_{S*}(\mathscr{E}_{S}|_{D_{i}})\twoheadrightarrow R_{S} 
\end{equation}
extends $q_{iS}:\pi_{K*}(\mathscr{E}_{K}|_{D_{i}})\twoheadrightarrow R_{K}$ (thus $\textrm{rk}(R_{S})=e_{i}$). Then the family given in Equations (\ref{quotient}), (\ref{Swamp}), (\ref{parabolic}), $\widehat{h}=(q_{S},\phi_{S},\underline{q}_{S})$, extends the family given in Equation (\ref{familyK}) to $S$. Clearly, the family $(q_{S},\phi_{S},\underline{q}_{S})$ defines an $S$-valued point $t:S\rightarrow\mathbb{G}$ in the Gieseker space. 
Since $t(\eta)=h(\eta)$ we deduce that $t(0)=h(0)$, thus it defines a semistable point in $\mathbb{G}$. Let us show that $q_{(0)}$ induces an isomorphism $U\simeq H^{0}(Y,\mathscr{E}_{(0)}(n))$. To show that it is injective, we consider the kernel, $H\subset U$, of $H^{0}(q_{(0)}(n)):U\rightarrow H^{0}(Y,\mathscr{E}_{(0)}(n))$. Since $t(0)$ is semistable we have,
\begin{align*}
\mu_{\mathbb{G}}(\lambda,t(0))&= \sum_{i=1}^{l}b_{i}\mu_{\mathbb{G}_{1}^{i}} (\lambda,t_{1,i}(0))+ c\mu_{\mathbb{G}_{2}}(\lambda,t_{2}(0))+\\
&+\sum_{i\in I(\underline{e})}k_{i}\mu_{\mathcal{G}r}(\lambda,t_{3,i}(0))=\\
&=\sum_{i=1}^{l}b_{i}(-r \textrm{dim}(H))+ca(-\textrm{dim}(H))+\\
&+\sum_{i\in I(\underline{e})}k_{i}(p \textrm{dim}(t_{i0}(H\oplus H)-e_{i} \textrm{dim}(H))=\\
&=\sum_{i=1}^{l}d_{i}(p-a\delta-r\sum_{j\in I(\underline{e})}\kappa_{j})( - r \textrm{dim}(H))+\sum_{i=1}^{l}d_{i}\delta ra(-\textrm{dim}(H)) +\\
&+\sum_{i\in I(\underline{e})}\kappa_{i}\alpha(-r \textrm{dim}(H))=-\alpha p \textrm{dim}(H)\geq0
\end{align*}
so we must have $\textrm{dim}(H)=0$, i.e, $U\rightarrow H^{0}(Y,\mathscr{E}_{(0)}(n))$ is injective. Let us show that it is in fact an isomorphism. For that we just need to show that $h^{1}(Y,\mathscr{E}_{(0)}(n))=0$. Suppose it does not. Then, by Serre duality, there is a non trivial morphism $\mathscr{E}_{(0)}(n)\rightarrow \omega_{Y}$. Let $\mathscr{G}$ be its image, and consider the linear map
$
\Omega:U\hookrightarrow H^{0}(Y,\mathscr{E}_{(0)}(n))\rightarrow H^{0}(Y,\mathscr{G}).
$
Let $H\subset U$ be the kernel of $\Omega$, let $\lambda$ be the corresponding one parameter subgroup and $\mathscr{F}\subset \mathscr{E}_{(0)}$ the subsheaf generated by $H$. Since $t(0)$ is semistable, we get:
\begin{align*}
0\leq\dfrac{\mu(\lambda,\textrm{Gies}(t))}{p}= & p\alpha_{\mathscr{F}}-\alpha \textrm{dim}(H) \Biggr.
+\delta\sum_{i=1}^{l}d_{i}( r\nu(I_{0},\textrm{dim}(H))-a\textrm{rk}(\mathscr{F}^{i}))+ \\
&+\sum_{i \in I(\underline{e})} \alpha\kappa_{i}\dfrac{r}{e_i}\textrm{dim}(q_{i}(\mathscr{F}(y_{1}^{i})\oplus\mathscr{F}(y_{2}^{i}))-b'_{2}\alpha_{\mathscr{F}}.
\end{align*}
Since $h^{0}(Y,\mathscr{G})\geq p-\textrm{dim}(H)$, we get
\begin{align*}
0\leq &-p\alpha_{\mathscr{G}}+\alpha h^{0}(Y,\mathscr{G}) \Biggr.
+\delta\sum_{i=1}^{l}d_{i}( r\nu(I_{0},\textrm{dim}(H))-a\textrm{rk}(\mathscr{F}^{i}))+ \\
&+\sum_{i\in I(\underline{e})}\alpha\kappa_{i}\dfrac{r}{e_i}\textrm{dim}(q_{i}(\mathscr{F}(y_{1}^{i})\oplus\mathscr{F}(y_{2}^{i}))-b'_{2}\alpha_{\mathscr{F}}.
\end{align*}
and therefore
$
h^{0}(Y,\mathscr{G})\geq \dfrac{p}{\alpha}+ M,
$
$M$ being a constant not depending on $\mathscr{G}$. Note that $p=\alpha n+d+r\chi(\mathscr{O}_{Y})$ and that we can assume $h^{0}(Y,\omega_{Y})\geq h^{0}(Y,\mathscr{G})$. Then, if $n$ is large enough we get a contradiction, so $h^{1}(Y,\mathscr{E}_{(0)}(n))=0$.

Let us show now that $\mathscr{E}_{(0)}$ has no torsion. Assume it has torsion, $\mathcal{T}\subset\mathscr{E}_{(0)}(n)$, supported on the divisors $D_{i}$, and let $T=H^{0}(Y,\mathcal{T})$. Let now $H:=H^{0}(q_{(0)}(n))^{-1}(T)\subset U$. Again, since $t_{(0)}$ is semistable, we have
\begin{align*}
0\leq\mu_{\mathbb{G}}(\lambda,t(0))&= \sum_{i=1}^{l}b_{i}\mu_{\mathbb{G}_{1}^{i}} (\lambda,t_{1,i}(0))+ c\mu_{\mathbb{G}_{2}}(\lambda,t_{2}(0))+ \\
&+\sum_{i\in I(\underline{e})}k_{i}\mu_{\mathcal{G}r}(\lambda,t_{3,i}(0))=\\
&=\sum_{i=1}^{l}b_{i}(-r \textrm{dim}(H))+ca(-\textrm{dim}(H))+\\
&+\sum_{i\in I(\underline{e})}k_{i}(p \textrm{dim}(t_{i0}(H\oplus H)-e_{i} \textrm{dim}(H))=\\
&=\sum_{i\in I(\underline{e})}\kappa_{i}\dfrac{r}{e_i}\alpha(p \textrm{dim}(t_{i0}(H\oplus H)) -\alpha p \textrm{dim}(H)\leq\\
&\leq \sum_{i\in I(\underline{e})}\kappa_{i}\dfrac{r}{e_i}\alpha p \textrm{dim}(T_{D_{i}}) -\sum_{i=1}^{\nu}\alpha p \textrm{dim}(T_{D_{i}})  \\
\end{align*}
Since $\kappa_{i}<\dfrac{e_i}{r}$ we must have $\textrm{dim}(T_{D_{i}})=0$, that is $\mathcal{T}=0$, so $\mathscr{E}_{(0)}$ has no torsion supported on the divisors $D_{i}$. Furthermore, from the last calculation it is clear that there can not be any torsion subsheaf supported outside the divisors $D_{i}$, therefore $\mathscr{E}_{(0)}$ is locally free. 
Thus, the extended family defines a point in $\mathfrak{I}_{\underline{d}}$.
Since the corresponding point in $\mathbb{G}$ lies in the semistable locus we deduce that the extended family lies in the semistable locus, $\mathbb{G}^{s(s)}$, as well and  by Theorem \ref{melon3} we are done.
\end{proof}

Let  $\underline{d}\in I_{r,d,\delta}$ be as in Section \ref{parameterparten}, Equation (\ref{indicesdegree}), and let $\mathfrak{I}_{\underline{d}}$ be the parameter space constructed in Section \ref{parameterparten}. Over $Y\times\mathfrak{I}_{\underline{d}}$ there is a universal family satisfying the local universal property (follows as in \cite[Proposition 2.8]{Alexander1}). Note also that the natural $\textrm{SL}(U)$ action on $\mathcal{Q}^{0}$, $\mathfrak{h}$ and $\mathscr{G}r_{i}$ determines an action on the space $\mathfrak{I}_{\underline{d}}$, $\Gamma:\textrm{SL}(U)\times\mathfrak{I}_{\underline{d}}\rightarrow\mathfrak{I}_{\underline{d}}$, and that the universal family satisfies the glueing property as well (again it follows as in \cite[Proposition 2.10]{Alexander1}).
Finally, we have

\begin{theorem}\label{theorem E}
There exist a projective scheme $\emph{SGPS}^{(\underline{\kappa},\delta)\text{-ss}}_{r,d,\mathfrak{tp}}$ and an open subscheme  $\emph{SGPS}^{(\underline{\kappa},\delta)\text{-s}}_{r,d,\mathfrak{tp}}$ together with natural transformation
$
\alpha^{(s)s}: \emph{\textbf{SGPS}}^{(\underline{\kappa},\delta)-(s)s}_{r,d,\mathfrak{tp}} \rightarrow h_{\emph{SGPS}^{(\underline{\kappa},\delta)\text{-(s)s}}_{r,d,\mathfrak{tp}}}
$
with the following propoerties:

1) For every scheme $S$ and every natural transformation 
$\emph{\textbf{SGPS}}^{(\underline{\kappa},\delta)-(s)s}_{r,d,\mathfrak{tp}}\rightarrow h_{\N},$
there exists a unique morphism $\varphi:\emph{SGPS}^{(\underline{\kappa},\delta)\text{-(s)s}}_{r,d,\mathfrak{tp}}\rightarrow S$ with $\alpha'=h(\varphi)\circ\alpha^{(s)s}$.

2) The scheme $\emph{SGPS}^{(\underline{\kappa},\delta)\text{-s}}_{r,d,\mathfrak{tp}}$ is a coarse moduli space for $\emph{\textbf{SGPS}}^{(\underline{\kappa},\delta)-s}_{r,d,\mathfrak{tp}}$.
\end{theorem}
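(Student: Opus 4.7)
The plan is to realize the moduli space as a GIT quotient of the parameter space $\mathfrak{I}_{r,d,\mathfrak{tp}}=\coprod_{\underline{d}\in I(r,d,\underline{\kappa},\delta,\mathfrak{tp})}\mathfrak{I}_{\underline{d}}$ constructed in Section \ref{parameterparten}, using the $\textrm{SL}(U)$-equivariant Gieseker embedding of Section \ref{espaciocolega}. First, choose $n$ larger than all the bounds $n_{0},n_{2},n_{3},n_{4}$ and the bound appearing in Proposition \ref{propergieseker}; since $I(r,d,\underline{\kappa},\delta,\mathfrak{tp})$ is a finite set (Remark \ref{coletascabron}), one single $n$ works simultaneously for all components. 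With this choice, every $(\underline{\kappa},\delta)$-semistable generalized parabolic swamp of type $\mathfrak{tp}$ with rank $r$ and degree $d$ appears as the restriction of the universal family to some geometric point of $\mathfrak{I}_{r,d,\mathfrak{tp}}$, and two such points lie in the same $\textrm{SL}(U)$-orbit if and only if they define isomorphic generalized parabolic swamps (this is standard from the rigidification by the choice of basis of $H^{0}(Y,\mathscr{E}(n))$).

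Next, endow $\mathbb{G}$ with the polarization $\mathscr{O}_{\mathbb{G}}(b_{1},\dots,b_{l},c,k_{i_{1}},\dots,k_{i_{\nu'}})$ defined by Equation \eqref{descriptionpolarization} and the associated $\textrm{SL}(U)$-linearization. By Proposition \ref{hastalaminga100} the Gieseker morphism $\textrm{Gies}:\mathfrak{I}_{\underline{d}}\to\mathbb{G}$ is $\textrm{SL}(U)$-equivariant and injective, and by Theorem \ref{melon3} we have $\textrm{Gies}^{-1}(\mathbb{G}^{(s)s})=\mathfrak{I}_{\underline{d}}^{(\underline{\kappa},\delta)\text{-(s)s}}$. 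By Mumford's theorem, the GIT quotient $\mathbb{G}^{ss}/\!\!/\textrm{SL}(U)$ exists as a projective scheme and contains an open subscheme that is a good (in fact geometric on the stable locus) quotient of $\mathbb{G}^{s}$. The map $\textrm{Gies}|_{\mathfrak{I}_{\underline{d}}^{(\underline{\kappa},\delta)\text{-ss}}}$ is injective, $\textrm{SL}(U)$-equivariant, and proper by Proposition \ref{propergieseker}, hence a closed immersion onto its image, which is therefore an $\textrm{SL}(U)$-invariant closed subscheme of $\mathbb{G}^{ss}$. Taking the categorical quotient of this closed subscheme yields a projective scheme $\textrm{SGPS}^{(\underline{\kappa},\delta)\text{-ss}}_{r,d,\mathfrak{tp},\underline{d}}$, and setting
\[
\textrm{SGPS}^{(\underline{\kappa},\delta)\text{-ss}}_{r,d,\mathfrak{tp}}:=\coprod_{\underline{d}\in I(r,d,\underline{\kappa},\delta,\mathfrak{tp})}\textrm{SGPS}^{(\underline{\kappa},\delta)\text{-ss}}_{r,d,\mathfrak{tp},\underline{d}}
\]
gives the candidate projective scheme (the disjoint union is finite, hence again projective). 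The open subscheme $\textrm{SGPS}^{(\underline{\kappa},\delta)\text{-s}}_{r,d,\mathfrak{tp}}$ is defined as the image of the stable locus.

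To construct the natural transformation $\alpha^{(s)s}$ one proceeds in the standard way: given a family $(\mathscr{E}_{S},\underline{q}_{S},\mathscr{N}_{S},\phi_{S})$ parametrized by $S$, one covers $S$ by open subsets $S_{i}$ over which $\pi_{S_{i}*}\mathscr{E}_{S_{i}}(n)$ is free; a choice of trivialization gives an $S_{i}$-valued point of $\mathfrak{I}_{r,d,\mathfrak{tp}}$, hence, composing with the quotient morphism, a morphism $S_{i}\to\textrm{SGPS}^{(\underline{\kappa},\delta)\text{-ss}}_{r,d,\mathfrak{tp}}$. Because different trivializations differ by an $\textrm{SL}(U)$-action, these morphisms agree on overlaps and glue to a well-defined morphism $S\to\textrm{SGPS}^{(\underline{\kappa},\delta)\text{-ss}}_{r,d,\mathfrak{tp}}$; the local universal property and the glueing property quoted after Proposition \ref{propergieseker} are exactly what is needed for this construction to be functorial. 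Property (1) then follows from the universal property of the GIT quotient applied componentwise together with functoriality, and property (2) — that on the stable locus the scheme is in fact a coarse moduli space — follows because the GIT quotient of $\mathbb{G}^{s}$ is a geometric quotient and closed orbits in the stable locus correspond bijectively to isomorphism classes of $(\underline{\kappa},\delta)$-stable objects by the orbit characterization above.

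The main technical obstacle is the verification at two points: ensuring that distinct $\textrm{SL}(U)$-orbits in $\mathfrak{I}_{\underline{d}}^{(\underline{\kappa},\delta)\text{-s}}$ correspond bijectively to isomorphism classes of stable objects (which requires that stability forces the induced map $U\to H^{0}(Y,\mathscr{E}(n))$ to be an isomorphism and that isomorphisms of parabolic swamps lift to $\textrm{SL}(U)$-elements — both of which are built into the construction of $\mathfrak{I}_{\underline{d}}$), and that the image of $\textrm{Gies}|_{\mathfrak{I}^{ss}}$ is closed inside $\mathbb{G}^{ss}$ so that its categorical quotient inherits projectivity from $\mathbb{G}^{ss}/\!\!/\textrm{SL}(U)$ — this is exactly the content of Proposition \ref{propergieseker}, so with that result in hand the argument assembles routinely.
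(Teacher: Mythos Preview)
Your proposal is correct and follows essentially the same route as the paper's proof: Gieseker embedding, identification of the (semi)stable loci via Theorem~\ref{melon3}, properness via Proposition~\ref{propergieseker}, GIT quotient componentwise, finite disjoint union over $I(r,d,\underline{\kappa},\delta,\mathfrak{tp})$, and then the local universal and glueing properties for the corepresentability statements. One minor quibble: injective $+$ proper only yields a \emph{finite} equivariant morphism, not a closed immersion in general, but this is already enough for the good quotient of $\mathfrak{I}_{\underline{d}}^{(\underline{\kappa},\delta)\text{-ss}}$ to exist and be projective --- and the paper's own proof makes the same jump without the intermediate ``closed immersion'' claim.
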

\begin{proof}
We may assume without lost of generality that $e_i\neq 0$ for each $i=1,\hdots,\nu$. 
Consider the Gieseker map $\textrm{Gies}:\mathfrak{I}_{\underline{d}}\hookrightarrow\mathbb{G}$, which is injective and $\textrm{SL}(U)$-equivariant (see Proposition \ref{hastalaminga100}). Consider on $\mathbb{G}$ the polarization
given in Section \ref{ovni}, 
and let 
$\Lcc:=\textrm{Gies}^{*}\mathscr{O}(b_{1},\hdots,b_{l},c,k_{i_1},\hdots,k_{i_{\nu'}}).$ 
From (\cite[Chap.2, \S1]{mum-git}), we know that $\textrm{Gies}^{-1}(\mathbb{G}^{(s)s})=\mathfrak{I}_{\underline{d}}^{\textrm{(s)s}}$, and therefore Theorem \ref{melon3} implies that $\mathfrak{I}_{\underline{d}}^{\textrm{(s)s}}=\mathfrak{I}_{\underline{d}}^{(\underline{\kappa},\delta)\textrm{-(s)s}}$. By Proposition \ref{propergieseker}, we deduce that the restriction of the Gieseker map to the semistable locus is a SL(U)-equivariant injective and proper morphism. Thus

1) the good quotient $\textrm{SGPS}^{(\underline{\kappa},\delta)\text{-ss}}_{r,\underline{d},\mathfrak{tp}}:=\mathfrak{I}_{\underline{d}}^{(\underline{\kappa},\delta)\textrm{-ss}}\slas \textrm{SL}(U)$ exists and is projective,

2) the geometric quotient $\textrm{SGPS}^{(\underline{\kappa},\delta)\text{-s}}_{r,\underline{d},\mathfrak{tp}}:=\mathfrak{I}_{\underline{d}}^{(\underline{\kappa},\delta)\textrm{-s}}/\textrm{SL}(U)$ exists and is an open subscheme of $\textrm{SGPS}^{(\underline{\kappa},\delta)\text{-ss}}_{r,\underline{d},\mathfrak{tp}}$.\\
Define 
$\textrm{SGPS}^{(\underline{\kappa},\delta)\text{-(s)s}}_{r,d,\mathfrak{tp}}:=\coprod_{\underline{d}\in I(r,d,\delta)} \textrm{SGPS}^{(\underline{\kappa},\delta)\text{-(s)s}}_{r,\underline{d},\mathfrak{tp}}.$
Now, 1) and 2) follow from this construction, the local universal property and the glueing property.
\end{proof}


\section{Moduli space for generalized parabolic singular principal bundles}\label{sectionspb}

\subsection{The parameter space}\label{singularbundlespace}
Let $r\in\mathbb{N}$, $d\in\mathbb{Z}$, $\underline{e}:=(e_1,\hdots,e_{\nu})\in\mathbb{N}^{\nu}$ with $e_i\leq r$, and $\delta\in\mathbb{Q}_{>0}$. 
In order to prove the existence of a coarse projective moduli space for the moduli functor given in Equation (\ref{modulifunctornormalization})
we need to rigidify the moduli problem. Let $n\in\mathbb{N}$ and $U:=\mathbb{C}^{P(n)}$. Consider the functor 
\begin{equation}\label{rigfunctorbund}
^{rig}\textbf{SPBGPS}(\rho)_{r,d,\underline{e}}^{n}(S)=\left\{   \begin{array}{l} \textrm{isomorphism classes of tuples }(\mathscr{E}_{S},\underline{q}_{S},\tau_{S},g_{S}) \\
 \textrm{where }(\mathscr{E}_{S},\tau_{S}) \textrm{ is a family of singular principal} \\ 
 \textrm{$G$-bundles parametrized by }S \textrm{ with rank $r$} \\
\textrm{and degree $d$, } (\mathscr{E}_{S},\underline{q}_{S})\textrm{ is a family of generalized}  \\
\textrm{parabolic locally free sheaves of type $\underline{e}$ and}\\
g_{S}:U\otimes\mathscr{O}_{S}\rightarrow\pi_{S*}\mathscr{E}_{S}(n)\textrm{ is a morphism such} \\
\textrm{that the induced morphism } \\
U\otimes\mathscr{O}_{Y\times S}(-n)\rightarrow\mathscr{E}_{S}\textrm{ is surjective }
 \end{array}\right\}.
\end{equation}
and let us show that there is a representative for it.

We may assume without loss of generality that $e_i\neq 0$ for all $i=1,\hdots,\nu$. Recall from Proposition \ref{boundparabolicSwamp2} that the family of locally free sheaves $\mathscr{E}$ of rank $r$ and degree $d$ that appear in $(\underline{\kappa},\delta)$-(semi)stable swamps with generalized parabolic structure is bounded. 
In consequence,  there is a natural number $n_{0}\in\mathbb{N}$ such that for $n\geq n_{0}$, $\mathscr{E}(n)$ is globally generated and $H^{1}(Y,\mathscr{E}(n))=0$. Fix $n>\textrm{max}\{n_{0},n_{4}\}$ and $\underline{d}=(d_{1},\hdots,d_{l})\in\mathbb{N}^{l}$ with $d=\sum_{i=1}^{l}d_{i}$, and let $p=r\chi(\mathscr{O}_{Y})+ d+\alpha n$. Let $U$ be the vector space $\mathbb{C}^{\oplus p}$. Denote by $\mathcal{Q}^{0}$ the quasi-projective scheme parametrizing equivalence classes of quotients $\q:U\otimes\pi_{Y}^{*}\mathscr{O}_{Y}(-n)\rightarrow\mathscr{E}$ where $\mathscr{E}$ is a locally free sheaf of uniform multirank $r$ and multidegree $(d_{1},\hdots,d_{l})$ on $Y$, and such that the induced map $U\rightarrow H^{0}(Y,\mathscr{E}(n))$ is an isomorphism. On $\Q\times Y$, we have the morphism,
$h:S^{\bullet}(V\otimes U\otimes\pi^{*}_{Y}\mathscr{O}_{Y}(-n))\rightarrow S^{\bullet}(V\otimes\mathscr{E}_{\mathcal{Q}^{0}}) \rightarrow S^{\bullet}(V\otimes\mathscr{E}_{\mathcal{Q}^{0}})^{G}$.
Let $s\in\mathbb{N}$ be as in \cite[Theorem 4.2, Remark 4.3]{AMC}. Then
$h(\bigoplus_{i=1}^{s}S^{i}(V\otimes U\otimes\pi_{Y}\mathscr{O}_{Y}(-n)))$,
contains a set of generators of $S^{\bullet}(V\otimes\mathscr{E}_{\mathcal{Q}^{0}})^{G}$. Observe that every morphism $k\colon\oplus_{i=1}^{s}S^{i}(V\otimes U\otimes\mathscr{O}_{Y}(-n))\rightarrow\mathscr{O}_{Y}$ breaks into a family of morphisms
$k_{i}:S^{i}(V\otimes U)\otimes\mathscr{O}_{Y}(-in)\simeq S^{i}(V\otimes U\otimes\mathscr{O}_{Y}(-n))\rightarrow\mathscr{O}_{Y}$
and therefore into morphisms
$k_{i}:S^{i}(V\otimes U)\overset{\Delta}{\hookrightarrow }S^{i}(V\otimes U)\otimes \mathbb{C}^{\oplus l}\rightarrow H^{0}(Y,\mathscr{O}_{Y}(in))$,
$\Delta$ being the diagonal morphism.  From this point onwards we can proceed as in \cite[\S 6.1]{AMC} and we end up with a closed subscheme $\mathbb{D}\subset \mathcal{Q}^{*}$ together with a universal family $(\mathscr{E}_{\mathbb{D}},\tau_{\mathbb{D}})$ of singular principal $G$-bundles of uniform multirank $r$ and multidegree $(d_{1},\hdots,d_{l})$.
In order to include the parabolic structure as well we need to consider the Grassmannians $\mathscr{G} r_{i}:=\textrm{Grass}_{e_{i}}(U^{\oplus 2})$ of $e_{i}$ dimensional quotients of $U^{\oplus 2}$. Define 
$Z:=\mathbb{D}\times \mathscr{G} r_{1}\times\hdots\times\mathscr{G} r_{\nu},$
and denote by $c_{i}:Z\rightarrow\mathscr{G} r_{i}$ the projection onto the $i$th Grassmannian. Consider the pullback of the universal quotient of the $i$th Grassmannian to $Z$,
$
q_{Z}^{i}: U^{\oplus 2}\otimes\mathscr{O}_{Z}\rightarrow R_{Z},
$
and take the direct sum
$q_{Z}: U^{\oplus 2\nu}\otimes\mathscr{O}_{Z}\rightarrow \bigoplus_{1}^{\nu} R_{Z}$.
Denote by $q_{Z}$, $\mathscr{E}_{Z}$ and $\tau_{Z}$ the pullbacks to $Z\times Y$ of the corresponding objects over $\mathbb{D}$. Consider the morphism $\pi^{i}:Z\times\{y^{i}_{1}, y^{i}_{2}\}\rightarrow Z\times\{x_{i}\}\simeq Z$.
and look at the following commutative diagram
For each $i$, there are quotients
$f_{i}:U^{\oplus 2}\times\mathscr{O}_{Z}\rightarrow \pi^{i}_{*}(\mathscr{E}_{Z}|_{y^{i}_{1},y^{i}_{2}})$ and we can form
$f:=\oplus(f_{i}):U^{\oplus 2\nu}\times\mathscr{O}_{Z}\rightarrow \bigoplus\pi^{i}_{*}(\mathscr{E}_{Z}|_{y^{i}_{1},y^{i}_{2}})$.
Consider the following diagram,
$$
\xymatrix{
0\ar[r]& \textrm{Ker}(f)\ar[r]\ar@{.>}[rd]^{q'} & 
U^{\oplus 2\nu}\times\mathscr{O}_{Z} \ar[r]^{f}\ar[d]^{q_{Z}}& \bigoplus\pi^{i}_{*}(\mathscr{E}_{Z}|_{y^{i}_{1},y^{i}_{2}}) \ar[r]& 0 \\
 & &  \bigoplus R_{Z}  & &.
}
$$
Denote by $\mathfrak{M}_{\underline{d}}(G)\subset Z$ the closed subscheme given by the zero locus of the morphism $q'$ (see \cite[lemma 3.1]{sols}). Then, the restriction of $q_{Z}$ to $\mathfrak{M}_{\underline{d}}(G)$  factorizes 
$$
\xymatrix{
 \bigoplus\pi_{*}^{i}(\mathscr{E}_{Z}|_{y^{i}_{1},y^{i}_{2}})|_{\mathfrak{M}_{\underline{d}}(G)}\ar@{=}[d] &&    \bigoplus R_{\mathfrak{M}_{\underline{d}}(G)}\ar@{=}[d] \\
\bigoplus\pi^{i}_{\mathfrak{M}_{\underline{d}}(G)*}(\mathscr{E}_{\mathfrak{M}_{\underline{d}}(G)}|_{y^{i}_{1},y^{i}_{2}}) \ar[rr]^-{q_{\mathfrak{M}_{\underline{d}}(G)}}&& \bigoplus R_{Z}|_{\mathfrak{M}_{\underline{d}}(G)}.
}
$$
Since $f$ and $q_{Z}$ are diagonal morphisms we deduce that $q_{\mathfrak{M}_{\underline{d}}(G)}$ is also diagonal. Therefore $q_{\mathfrak{M}_{\underline{d}}(G)}$ is determined by $\nu$ morphisms
$q_{\mathfrak{M}_{\underline{d}}(G)}^{i}:\pi^{i}_{\mathfrak{M}_{\underline{d}}(G)*}(\mathscr{E}_{\mathfrak{M}_{\underline{d}}(G)}|_{y^{i}_{1},y^{i}_{2}}) \rightarrow R_{\mathfrak{M}_{\underline{d}}(G)}.$
Denote by $(\mathscr{E}_{\mathfrak{M}_{\underline{d}}(G)},\tau_{\mathfrak{M}_{\underline{d}}(G)})$ the restriction of $(\mathscr{E}_{Z},\tau_{Z})$ to $\mathfrak{M}_{\underline{d}}(G)$. Then $(\mathscr{E}_{\mathfrak{M}_{\underline{d}}(G)},\underline{q}_{\mathfrak{M}_{\underline{d}}(G)},\tau_{\mathfrak{M}_{\underline{d}}(G)})$ is a universal family of singular principal $G$-bundles with generalized parabolic structure.

\begin{theorem}\label{GILI}
The functor $^{rig}\emph{\textbf{SPBGPS}}(\rho)_{r,d,\underline{e}}^{n}$ is representable.
\end{theorem}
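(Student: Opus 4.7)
The plan is to show that the disjoint union $\mathfrak{M}_{r,d}(G) := \coprod_{\underline{d}} \mathfrak{M}_{\underline{d}}(G)$, equipped with the family $(\mathscr{E}_{\mathfrak{M}_{\underline{d}}(G)}, \underline{q}_{\mathfrak{M}_{\underline{d}}(G)}, \tau_{\mathfrak{M}_{\underline{d}}(G)})$ together with the tautological trivialization inherited from $\mathcal{Q}^{0}$, represents the rigidified functor. Concretely, for every $\mathbb{C}$-scheme $S$ and every family $(\mathscr{E}_{S}, \underline{q}_{S}, \tau_{S}, g_{S})$ in $^{rig}\textbf{SPBGPS}(\rho)_{r,d,\underline{e}}^{n}(S)$, I would produce a unique morphism $\varphi_{S} \colon S \to \mathfrak{M}_{r,d}(G)$ such that the pullback of the universal family recovers the given tuple, up to the canonical isomorphism.

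The construction of $\varphi_{S}$ proceeds in three stages that mirror the three stages of the parameter space. First, the trivialization $g_{S}$ together with the induced surjection $U \otimes \pi_{Y}^{*}\mathscr{O}_{Y}(-n) \twoheadrightarrow \mathscr{E}_{S}$ defines a flat family of quotients with fixed Hilbert polynomial $P(n)$ and multidegree $\underline{d}$, hence by the universal property of $\mathcal{Q}^{0}$ it yields a morphism $\varphi_{1}\colon S \to \mathcal{Q}^{0}$, with a decomposition according to multidegree. Second, the morphism of $\mathscr{O}_{Y \times S}$-algebras $\tau_{S}$, combined with the universal quotient, produces via the procedure of \cite[\S 6.1]{AMC} (bounded-degree generation of $S^{\bullet}(V \otimes \mathscr{E})^{G}$ up to degree $s$ by \cite[Theorem 4.2, Remark 4.3]{AMC}, followed by the diagonal embedding and pushforward of the algebra relations) a unique lift $\varphi_{2}\colon S \to \mathbb{D}$. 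Third, for each $i$, the quotient $q_{Si}\colon \pi_{Si*}(\mathscr{E}_{S}|_{S_{D_{i}}}) \twoheadrightarrow R_{i}$ precomposed with the evaluation $U^{\oplus 2} \otimes \mathscr{O}_{S} \to \pi_{Si*}(\mathscr{E}_{S}|_{S_{D_{i}}})$ induced by $g_{S}$ gives a locally free quotient of $U^{\oplus 2} \otimes \mathscr{O}_{S}$ of rank $e_{i}$, and hence, by the universal property of the Grassmannian, a morphism $\varphi_{3,i} \colon S \to \mathscr{G} r_{i}$.

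Packaging $(\varphi_{2}, \varphi_{3,1}, \ldots, \varphi_{3,\nu})$ together yields a morphism $S \to Z = \mathbb{D} \times \mathscr{G} r_{1} \times \cdots \times \mathscr{G} r_{\nu}$. The compatibility demanded by the rigidified functor, namely that $q_{Si}$ factor through the evaluation $U^{\oplus 2} \otimes \mathscr{O}_{S} \to \pi_{Si*}(\mathscr{E}_{S}|_{S_{D_{i}}})$, is precisely the vanishing of the pullback of the morphism $q'$ cutting out $\mathfrak{M}_{\underline{d}}(G)$ inside $Z$. Hence by \cite[Lemma 3.1]{sols} the morphism factors uniquely through the closed subscheme $\mathfrak{M}_{\underline{d}}(G)$, producing the desired $\varphi_{S}$. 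Pulling the universal family back along $\varphi_{S}$ then recovers $(\mathscr{E}_{S}, \underline{q}_{S}, \tau_{S}, g_{S})$ essentially by construction, and the uniqueness of $\varphi_{S}$ is inherited from the uniqueness at each intermediate step.

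The main obstacle is not conceptual but bookkeeping: one needs to verify that cohomology and base change apply so that $\pi_{S*}\mathscr{E}_{S}(n)$ is locally free of rank $p$ and that the fiberwise rank hypotheses required by $\mathcal{Q}^{0}$, $\mathbb{D}$ and the Grassmannians are preserved in families. This is guaranteed by the choice $n \geq \max\{n_{0}, n_{4}\}$ fixed in Section \ref{singularbundlespace}, which ensures $H^{1}(Y, \mathscr{E}(n)) = 0$ for every locally free sheaf $\mathscr{E}$ under consideration, as well as by the flatness of the various families used in the construction. The disjoint-union structure over $\underline{d} \in I(r,d,\underline{\kappa},\delta,\mathfrak{tp})$ poses no extra difficulty, since the multidegree is locally constant on $S$.
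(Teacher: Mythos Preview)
Your proposal is correct and follows essentially the same approach as the paper: the paper's proof is a one-line reference to the construction of $\mathfrak{M}_{\underline{d}}(G)$ carried out in \S\ref{singularbundlespace} together with the disjoint union over multidegrees, and what you have written is precisely the detailed verification of the universal property that this construction implicitly asserts. Your three-stage decomposition (Quot scheme, lift to $\mathbb{D}$ via \cite[\S 6.1]{AMC}, Grassmannians, then factoring through the zero locus of $q'$) exactly mirrors the layers in which the parameter space was built.
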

\begin{proof}
Follows from the construction of $\mathfrak{M}_{\underline{d}}(G)$ and taking the disjoint union over all the possible multidegrees as in Theorem \ref{theorem E}, which we denote by $\mathfrak{M}(G)$.
\end{proof}

\subsection{The moduli space}

Recall from Proposition \ref{boundparabolicSwamp2} that the family of locally free sheaves $\mathscr{E}$ of fixed degree and rank which appears in a $(\underline{\kappa},\delta)$-(semi)stable swamp with generalized parabolic structure is bounded.
As a consequence,  there is a natural number $n_0\in\mathbb{N}$ such that for $n\geq n_{0}$, $\mathscr{E}(n)$ is globally generated and $h^{1}(Y,\mathscr{E}(n))=0$. Fix such natural number $n$ and consider the functors $^{rig}\textbf{SGPS}^{n}_{r,d,\mathfrak{tp}}$ and $^{rig}\textbf{SPBGPS}(\rho)_{r,d,\underline{e}}^{n}$ given in Equation (\ref{rigfunctorswamps}) and Equation (\ref{rigfunctorbund}) respectively.
Note that there is a natural $\textrm{GL}(U)$ action on the space $\mathfrak{M}(G)$,
$\Gamma:\textrm{GL}(U)\times\mathfrak{M}(G)\rightarrow\mathfrak{M}(G)$.
We can view  this $\textrm{GL}(U)$-action as a $(\mathbb{C}^{*}\times \textrm{SL}(U))$-action. Thus, we will construct the quotient of $\mathfrak{M}(G)$ by $\textrm{GL}(U)$ in two steps, considering the actions of $\mathbb{C}^{*}$ and $\textrm{SL}(U)$ separately. 
Consider the action of $\mathbb{C}^{*}$ on $^{rig}\textbf{SPBGPS}(\rho)_{r,d,\underline{e}}^{n}$. Let $\mathfrak{tp}=(a,b,0,\mathscr{O}_{Y},\underline{e})$, where $a$ and $b$ are as in \cite[Theorem 5.5]{AMC}. The map given in Equation (\ref{swamp-bundle}) induces an injective $\mathbb{C}^{*}$-invariant natural transformation
\begin{equation*}
^{rig}\textbf{SPBGPS}(\rho)_{r,d,\underline{e}}^{n}\hookrightarrow \  ^{rig}\textbf{SGPS}^{n}_{r,d,\mathfrak{tp}},
\end{equation*}
which in turn induces a $\textrm{SL}(U)$-equivariant injective and proper morphism, 
$$\beta:\mathfrak{M}(G)\slas \mathbb{C}^{*}\hookrightarrow \mathfrak{I}_{r,d,\mathfrak{tp}}=\coprod_{\underline{d}\in I} \mathfrak{I}_{\underline{d}}.$$
Furthermore, the universal family on $\mathfrak{M}(G)$ satisfies the local universal property as well as the glueing property. We finally have
\begin{theorem}\label{CC}
There is a projective scheme $\emph{SPBGPS}(\rho)_{r,d,\underline{e}}^{(\underline{\kappa},\delta)-\emph{ss
}}$ and an open subscheme $\emph{SPBGPS}(\rho)_{r,d,\underline{e}}^{(\underline{\kappa},\delta)-\emph{s}}\subset \emph{SPBGPS}(\rho)_{r,d,\underline{e}}^{(\underline{\kappa},\delta)-\emph{ss}}$ together with a natural tranformation
$
\alpha^{\emph{(s)s}}:\emph{\textbf{SPBGPS}}(\rho)_{r,d,\underline{e}}^{(\underline{\kappa},\delta)\emph{-(s)s}}\rightarrow h_{\emph{SPBGPS}(\rho)_{r,d,\underline{e}}^{(\underline{\kappa},\delta)\emph{-(s)s}}}
$
with the following properties:

1) For every scheme $S$ and every natural transformation 
$\alpha': \emph{\textbf{SPBGPS}}(\rho)_{r,d,\underline{e}}^{(\underline{\kappa},\delta)\emph{-(s)s}} \rightarrow h_{S},$ there exists a unique morphism $\varphi:\emph{SPBGPS}_{r,d,\underline{e}}^{(\underline{\kappa},\delta)\emph{-(s)s}}(\rho)\rightarrow S$ with $\alpha'=h(\varphi)\circ\alpha^{\emph{(s)s}}$.

2) The scheme $\emph{SPBGPS}_{r,d,\underline{e}}^{(\underline{\kappa},\delta)\emph{-s}}(\rho)$ is a coarse quasi-projective moduli space for
the moduli functor $\emph{\textbf{SPBGPS}}_{r,d,\underline{e}}^{(\underline{\kappa},\delta)\emph{-s}}(\rho)$.
\end{theorem}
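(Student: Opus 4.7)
The plan is to reduce the construction to Theorem \ref{theorem E} via the injection from singular principal bundles into swamps provided by \cite[Theorem 5.5]{AMC}. From Theorem \ref{GILI}, the scheme $\mathfrak{M}(G)$ carries a universal family of $(\underline{\kappa},\delta)$-(semi)stable generalized parabolic singular principal $G$-bundles, and a routine verification (parallel to \cite[Propositions 2.8, 2.10]{Alexander1}) shows that this family satisfies the local universal property and the glueing property with respect to the natural $\textrm{GL}(U)$-action. Consequently, the existence of $\textrm{SPBGPS}(\rho)_{r,d,\underline{e}}^{(\underline{\kappa},\delta)\text{-(s)s}}$ together with the natural transformation $\alpha^{(s)s}$ is reduced to the existence of a good (resp. geometric) quotient of the $(\underline{\kappa},\delta)$-(semi)stable locus $\mathfrak{M}(G)^{(\underline{\kappa},\delta)\text{-(s)s}}$ by $\textrm{GL}(U)$.

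The next step is to split the $\textrm{GL}(U)$-action into its $\mathbb{C}^{*}$- and $\textrm{SL}(U)$-parts. The $\mathbb{C}^{*}$-quotient is already encoded in the passage from a singular principal $G$-bundle $(\mathscr{E},\tau,\underline{q})$ to its associated swamp $(V\otimes\mathscr{E},\varphi_{\tau},\underline{q})$ via the map (\ref{swamp-bundle}), since the rescaling of the rigidification $g_{S}$ is absorbed by the projectivization built into the construction of $\mathfrak{I}_{\underline{d}}$ in Section \ref{parameterparten}. This produces the $\textrm{SL}(U)$-equivariant, injective and proper morphism $\beta:\mathfrak{M}(G)\slas\mathbb{C}^{*}\hookrightarrow\mathfrak{I}_{r,d,\mathfrak{tp}}$ already exhibited in the excerpt.

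The crucial observation is that $\beta$ identifies $(\underline{\kappa},\delta)$-(semi)stability on both sides. Indeed, the semistability function $\mu(\mathscr{E}_{\bullet},\underline{m},\tau)$ is defined precisely as $\mu(\mathscr{E}_{\bullet},\underline{m},\varphi_{\tau})$ (see Definition \ref{gpsstable}), so the inequality of Definition \ref{gpsstable} for $(\mathscr{E},\underline{q},\tau)$ coincides with the swamp inequality for $(V\otimes\mathscr{E},\varphi_{\tau},\underline{q})$. Hence $\beta$ restricts to a proper $\textrm{SL}(U)$-equivariant closed immersion $\beta^{(s)s}:\mathfrak{M}(G)^{(\underline{\kappa},\delta)\text{-(s)s}}\slas\mathbb{C}^{*}\hookrightarrow\mathfrak{I}_{r,d,\mathfrak{tp}}^{(\underline{\kappa},\delta)\text{-(s)s}}$. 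Pulling back the linearized invertible sheaf $\Lcc$ from Theorem \ref{theorem E} and composing with the good (resp. geometric) quotient map for the swamp moduli space, one realises $\textrm{SPBGPS}(\rho)_{r,d,\underline{e}}^{(\underline{\kappa},\delta)\text{-(s)s}}$ as the scheme-theoretic image of $\mathfrak{M}(G)^{(\underline{\kappa},\delta)\text{-(s)s}}$ inside $\textrm{SGPS}^{(\underline{\kappa},\delta)\text{-(s)s}}_{r,d,\mathfrak{tp}}$; this image is a closed subscheme (hence projective in the semistable case and locally closed/quasi-projective in the stable case) because the induced map on $\textrm{SL}(U)$-quotients is obtained from a proper $\textrm{SL}(U)$-equivariant morphism whose orbit structure is preserved.

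Properties (1) and (2) then follow formally from the local universal property and the glueing property of the universal family on $\mathfrak{M}(G)$, combined with the universal property of good (resp. geometric) quotients, in exact parallel with the proof of Theorem \ref{theorem E}. The step I expect to be the most delicate is the compatibility between the $\textrm{SL}(U)$-orbit structure on $\mathfrak{M}(G)\slas\mathbb{C}^{*}$ and on $\mathfrak{I}_{r,d,\mathfrak{tp}}$: one must verify that closed semistable orbits on the bundle side correspond to closed semistable orbits on the swamp side, so that $\beta$ descends to a closed immersion of GIT quotients and the resulting scheme is genuinely projective. This is where the injectivity of the map (\ref{swamp-bundle}) and the properness of $\beta$ have to be used in tandem.
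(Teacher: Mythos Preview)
Your proposal is correct and follows essentially the same approach as the paper: the paper's own proof is a single sentence which simply pulls back the linearized invertible sheaf $\Lcc$ from Theorem \ref{theorem E} along $\beta$ to obtain $\Lcc':=\beta^{*}\Lcc$ and then defers to the connected case treated in \cite{Alexander1}. Your write-up spells out exactly this strategy in more detail---the reduction to a $\textrm{GL}(U)$-quotient via the local universal and glueing properties, the splitting into $\mathbb{C}^{*}$- and $\textrm{SL}(U)$-parts, the identification of the two $(\underline{\kappa},\delta)$-semistability conditions through Definition \ref{gpsstable}, and the descent of $\beta$ to the GIT quotients---and your flagged ``delicate step'' about orbit compatibility is precisely the content absorbed into the citation of \cite{Alexander1}.
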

\begin{proof}
Considering the linearized invertible sheaf $\Lcc$ given in the proof of Theorem \ref{theorem E} and defining $\Lcc':=\beta^{*}\Lcc$,  it follows as in the connected case (see \cite{Alexander1}).
\end{proof}

\section{Application to principal bundles on reducible nodal curves}\label{applicationsection}

Let $X$ be a projective nodal curve with nodes $x_{1},\hdots,x_{\nu}$ and $l$ irreducible components, and $\pi\colon Y=\coprod_{i=1}^{l} Y_{i}\rightarrow X$ its normalization. Let $\mathscr{O}_{X}(1)$ be an ample invertible sheaf on $X$ and denote by $\mathscr{O}_{Y}(1)$ the ample invertible sheaf obtained by pulling $\mathscr{O}_{X}(1)$ back to $Y$. As usual, $h$ is the degree of $\mathscr{O}_{Y}(1)$, $y_{1}^{i},y_{2}^{i}$ are the points in the preimage of the $i$th nodal point $x_{i}$, $D_{i}=y_{1}^{i}+y_{2}^{i}$ are the corresponding divisor on $Y$ and $D=\sum D_{i}$ is the total divisor.

\subsection{Torsion free sheaves over a reducible nodal curve}

Let $\mathscr{F}$ be a torsion free sheaf on $X$ of rank $r$, that is, of uniform multirank $r$. C. S. Seshadri showed (see \cite[Chapter 8]{Seshadri}) that for each nodal point $x$ (regardless of how many components this point lies on), there is a natural number $0\leq l\leq r$ such that 
$
\mathscr{F}_{x}\simeq\mathscr{O}_{X,x}^{l}\oplus\mathfrak{m}_{x}^{r-l}.
$
Then, it is said that
a torsion free sheaf of rank $r$ is of type $\underline{l}=(l_{1},\hdots,l_{\nu})$ if $\mathscr{F}_{x_{i}}\simeq \mathscr{O}_{X,x_{i}}^{l_{i}}\oplus\mathfrak{m}_{x_{i}}^{r-l_{i}}$ at the $i$th nodal point.

If $\mathscr{F}$ be a torsion free sheaf on $X$ of rank $r$ and of type $\underline{l}$, then the canonical map $\alpha:\mathscr{F}\rightarrow\pi_{*}\pi^{*}(\mathscr{F})$ is injective, and $\mathscr{T}:=\textrm{Coker}(\alpha)$ is a torsion sheaf supported on the nodes. A short calculation shows that 
$\textrm{length}(\mathscr{T})=\sum_{i=1}^{\nu}(2r-l_{i})$.
and
\begin{equation}\label{degreespullback}
\begin{split}
\textrm{deg}(\pi^{*}\mathscr{F})&=\textrm{deg}(\mathscr{F})+r\nu-\sum l_{i}, \\
\textrm{deg}(T(\mathscr{F}))&=2(r\nu-\sum l_{i}),
\end{split}
\end{equation}
$T(\mathscr{F})$ being the torsion subsheaf of $\pi^{*}(\mathscr{F})$ (see \cite{Lange} for the irreducible case).

\begin{proposition}\label{canonical-bundle}
If $\mathscr{F}$ is a torsion free sheaf of rank $r$ and type $\underline{l}=(l_{1},\hdots,l_{\nu})$ on $X$, then the natural  morphism
$\beta:\mathscr{F}\hookrightarrow\pi_{*}(\mathscr{E}_{0})$,
where $\mathscr{E}_{0}:=\pi^{*}(\mathscr{F})/T(\mathscr{F})$, is injective and  $\emph{length}(\emph{Coker}(\beta))=l:=\sum l_{i}$. Furthermore, $\emph{Coker}(\beta)=\bigoplus_{i=1}^{\nu}\mathbb{C}_{x_{i}}^{l_{i}}$.
\end{proposition}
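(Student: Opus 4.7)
My plan is to dispatch the injectivity by a short torsion-free argument, and then to identify $\textrm{Coker}(\beta)$ stalkwise at each node using Seshadri's normal form for $\mathscr{F}_{x_i}$.

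Since $\pi$ is finite and $T(\mathscr{F})$ is supported at finitely many points, $\pi_*$ is exact on $0\to T(\mathscr{F})\to\pi^*\mathscr{F}\to\mathscr{E}_0\to 0$, so $\beta$ factors as
\begin{equation*}
\mathscr{F}\xrightarrow{\alpha}\pi_*\pi^*\mathscr{F}\twoheadrightarrow\pi_*\mathscr{E}_0,
\end{equation*}
with kernel $\alpha^{-1}(\pi_*T(\mathscr{F}))$. The map $\alpha$ is injective by the discussion preceding the proposition, $\pi_*T(\mathscr{F})$ is a torsion $\mathscr{O}_X$-module, and $\mathscr{F}$ is torsion-free, so this kernel vanishes.

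For the cokernel, $\beta$ is an isomorphism away from the nodes, hence $\textrm{Coker}(\beta)$ is supported on $\{x_1,\ldots,x_\nu\}$ and it suffices to identify each stalk. Fix $x=x_i$ and set $R:=\mathscr{O}_{X,x}$, $\mathfrak{m}:=\mathfrak{m}_x$, and $\widetilde R:=(\pi_*\mathscr{O}_Y)_x=\mathscr{O}_{Y,y_1^i}\times\mathscr{O}_{Y,y_2^i}$; then $R\subset\widetilde R$ is the subring of pairs congruent modulo $\mathfrak{m}$, the ideal $\mathfrak{m}$ coincides with $\mathfrak{m}_{y_1^i}\times\mathfrak{m}_{y_2^i}\subset\widetilde R$, and $\widetilde R/R\simeq\mathbb{C}$. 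Seshadri's classification writes $\mathscr{F}_x\simeq R^{l_i}\oplus\mathfrak{m}^{r-l_i}$. Pulling each summand back to $Y$, the free factors contribute $\mathscr{O}_{Y,y_j^i}$ at each preimage (no torsion), while for the ideal factors the natural map $\pi^*\mathfrak{m}_X\to\mathscr{O}_Y$ realises $(\pi^*\mathfrak{m}_X/T)_{y_j^i}$ as $\mathfrak{m}_{y_j^i}\subset\mathscr{O}_{Y,y_j^i}$. Summing the two preimages,
\begin{equation*}
(\pi_*\mathscr{E}_0)_x=\widetilde R^{l_i}\oplus\mathfrak{m}^{r-l_i}\subset\widetilde R^{r},
\end{equation*}
and $\beta_x$ is the direct sum of the standard inclusion $R^{l_i}\hookrightarrow\widetilde R^{l_i}$ with the identity on $\mathfrak{m}^{r-l_i}$, so its cokernel is $(\widetilde R/R)^{l_i}\simeq\mathbb{C}^{l_i}$. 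Gluing over all nodes gives $\textrm{Coker}(\beta)=\bigoplus_{i=1}^{\nu}\mathbb{C}_{x_i}^{l_i}$, of total length $l=\sum_{i} l_i$.

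As an independent check on the length, the snake lemma applied to the sequences $0\to\pi_*T(\mathscr{F})\to\pi_*\pi^*\mathscr{F}\to\pi_*\mathscr{E}_0\to 0$ and $0\to\mathscr{F}\to\pi_*\pi^*\mathscr{F}\to\mathscr{T}\to 0$, combined with $\alpha(\mathscr{F})\cap\pi_*T(\mathscr{F})=0$, yields $\textrm{Coker}(\beta)\simeq\mathscr{T}/\pi_*T(\mathscr{F})$, of length $\sum_i(2r-l_i)-\sum_i 2(r-l_i)=l$ by Equation~\eqref{degreespullback}. The one genuinely delicate step is the local identification of the torsion-free quotient of $\pi^*\mathfrak{m}_X$ at each $y_j^i$ with $\mathfrak{m}_{y_j^i}\subset\mathscr{O}_{Y,y_j^i}$, together with the resulting verification that $\beta_x$ is the natural inclusion in the form above; once this is settled, both the length count and the description of $\textrm{Coker}(\beta)$ as a skyscraper vector-space sheaf drop out.
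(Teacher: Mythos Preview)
Your proof is correct, and it takes a genuinely different route from the paper. The paper argues injectivity in one line (an isomorphism at smooth points plus torsion-freeness) and then computes only the \emph{total length} of $\textrm{Coker}(\beta)$ by a global Euler-characteristic count: from the short exact sequence $0\to\mathscr{F}\to\pi_*\mathscr{E}_0\to\textrm{Coker}(\beta)\to 0$ one gets $\textrm{length}(\textrm{Coker}(\beta))=r\nu+\textrm{deg}(\mathscr{E}_0)-\textrm{deg}(\mathscr{F})$, and plugging in Equation~(\ref{degreespullback}) gives $\sum l_i$. The paper does not actually write down an argument for the ``Furthermore'' clause identifying the stalks.

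You instead work locally at each node using Seshadri's normal form, computing $(\pi_*\mathscr{E}_0)_{x_i}\simeq\widetilde R^{l_i}\oplus\mathfrak{m}^{r-l_i}$ and recognising $\beta_{x_i}$ as the inclusion $R^{l_i}\hookrightarrow\widetilde R^{l_i}$ direct sum the identity. This buys you the full stalkwise description $\textrm{Coker}(\beta)_{x_i}\simeq\mathbb{C}^{l_i}$, which the paper's Euler-characteristic argument does not yield directly. Your snake-lemma check $\textrm{Coker}(\beta)\simeq\mathscr{T}/\pi_*T(\mathscr{F})$ is also a nice alternative for the length alone, closer in spirit to the paper's approach but still distinct. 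The one place you rightly flag as delicate---that the torsion-free quotient of $\pi^*\mathfrak{m}_{X}$ at $y_j^i$ is $\mathfrak{m}_{y_j^i}$ and that $\beta$ restricts to the identity there---is correct and can be checked by hand in the local model $R=\mathbb{C}[[s,t]]/(st)$.
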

\begin{proof}
Let $\mathscr{F}$ be a torsion free sheaf on the nodal curve $X$ and let $T(\mathscr{F})$ be the torsion subsheaf of $\pi^{*}(\mathscr{F})$. Consider the natural morphism $\beta:\mathscr{F}\rightarrow \pi_{*}(\pi^{*}(\mathscr{F})/T(\mathscr{F}))$. This is injective at every smooth point so it is injective since $\mathscr{F}$ is torsion free.
Consider  now the  exact sequence
\begin{equation}\label{coker}
0\rightarrow\mathscr{F}\hookrightarrow\pi_{*}(\pi^{*}(\mathscr{F})/T(\mathscr{F}))\rightarrow \textrm{Coker}(\beta)\rightarrow 0.
\end{equation}
Then,  we have
$\chi(\pi^{*}(\mathscr{F})/T(\mathscr{F}))=\chi(\mathscr{F})+\textrm{length}(\textrm{Coker}(\beta))$
and, therefore,
$r\chi(\mathscr{O}_{Y})+\textrm{deg}(\pi^{*}(\mathscr{F})/T(\mathscr{F}))=r\chi(\mathscr{O}_{X})+\textrm{deg}(\mathscr{F})+\textrm{length}(\textrm{Coker}(\beta))$.
However $\chi(\mathscr{O}_{Y})-\chi(\mathscr{O}_{X})=\nu$, so
$\textrm{length}(\textrm{Coker}(\beta))=r\nu+\textrm{deg}(\pi^{*}(\mathscr{F})/T(\mathscr{F}))-\textrm{deg}(\mathscr{F})$
and applying Equation (\ref{degreespullback}) we get the result.
\end{proof}

\begin{corollary}\label{length}
Let $\mathscr{F}$ be a torsion free sheaf of rank $r$ and type $\underline{l}=(l_{1},\hdots,l_{\nu})$ on $X$. Suppose there exists a locally free sheaf $\mathscr{E}$ on $Y$ of the same rank and an injection $i:\mathscr{F}\hookrightarrow\pi_{*}\mathscr{E}$. Then $\emph{length}(\emph{Coker}(i))=e$ if and only if $\emph{length}(\emph{Coker}(\pi_{*}(\lambda)))=e-l$, where $l=\sum l_{i}$.
\end{corollary}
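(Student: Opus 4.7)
The plan is to realize $i$ as the composition of the canonical inclusion $\beta\colon\mathscr{F}\hookrightarrow \pi_{*}\mathscr{E}_{0}$ from Proposition \ref{canonical-bundle} with the pushforward of a natural map $\lambda\colon\mathscr{E}_{0}\to\mathscr{E}$, and then to read the two lengths off a short exact sequence of cokernels. First I would produce $\lambda$ by applying adjunction to $i$: the adjoint $\widetilde{i}\colon\pi^{*}\mathscr{F}\to\mathscr{E}$ factors through $\mathscr{E}_{0}=\pi^{*}(\mathscr{F})/T(\mathscr{F})$, since $\mathscr{E}$ is locally free on the smooth curve $Y$ and hence annihilates the torsion subsheaf $T(\mathscr{F})$. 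A quick check on the open locus where $\pi$ is an isomorphism shows that $\lambda$ is generically injective between locally free sheaves of the same rank $r$, and is therefore injective everywhere; consequently $\pi_{*}(\lambda)$ is injective as well. By the unit--counit identity for adjunction, $i = \pi_{*}(\lambda)\circ\beta$.

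With the chain of inclusions $\mathscr{F}\subseteq \pi_{*}\mathscr{E}_{0}\subseteq \pi_{*}\mathscr{E}$ in hand, the third isomorphism theorem delivers the short exact sequence
\begin{equation*}
0\longrightarrow \pi_{*}\mathscr{E}_{0}/\mathscr{F}\longrightarrow \pi_{*}\mathscr{E}/\mathscr{F}\longrightarrow \pi_{*}\mathscr{E}/\pi_{*}\mathscr{E}_{0}\longrightarrow 0,
\end{equation*}
which, after rewriting, becomes
\begin{equation*}
0\longrightarrow \textrm{Coker}(\beta)\longrightarrow \textrm{Coker}(i)\longrightarrow \textrm{Coker}(\pi_{*}(\lambda))\longrightarrow 0.
\end{equation*}
Proposition \ref{canonical-bundle} identifies the leftmost term as a torsion sheaf of length $l=\sum l_{i}$, and additivity of length on this finite exact sequence of torsion sheaves gives $\textrm{length}(\textrm{Coker}(i))=l+\textrm{length}(\textrm{Coker}(\pi_{*}(\lambda)))$, from which the claimed equivalence is immediate. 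The only delicate point in the argument is the injectivity of $\lambda$ (equivalently of $\pi_{*}(\lambda)$); once this is secured, the rest is a formal manipulation with quotient sheaves.
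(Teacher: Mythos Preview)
Your proof is correct and follows essentially the same route as the paper: produce $\lambda$ by adjunction and factoring through $\mathscr{E}_{0}$, obtain $i=\pi_{*}(\lambda)\circ\beta$, and then read off the relation between the cokernel lengths via $\textrm{Coker}(i)/\textrm{Coker}(\beta)\simeq\textrm{Coker}(\pi_{*}(\lambda))$ together with Proposition~\ref{canonical-bundle}. You supply slightly more detail than the paper on why $\lambda$ is injective, but otherwise the arguments coincide.
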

\begin{proof}
Let $\mathscr{F}$  be a torsion free sheaf of rank $r$ on $X$ and suppose there exists a locally free sheaf of rank $r$, $\mathscr{E}$, on the normalization and an injection $i:\mathscr{F}\hookrightarrow\pi_{*}(\mathscr{E})$. Then, there is an injection $\lambda:\mathscr{E}_{0}\hookrightarrow \mathscr{E}$ such that $\pi_{*}(\lambda)\circ\beta=i$.
From the above observation, it follows that $\textrm{Coker}(i)/\textrm{Coker}(\beta)\simeq \textrm{Coker}(\pi_{*}(\lambda))$.
Hence, we deduce that $\textrm{length}(\textrm{Coker}(\pi_{*}(\lambda)))=\textrm{length}(\textrm{Coker}(i))-\textrm{length}(\textrm{Coker}(\beta))$. Since $\textrm{length}(\textrm{Coker}(i))=e$ and $\textrm{length}(\textrm{Coker}(\beta))=l$, we can conclude using Proposition \ref{canonical-bundle}.
\end{proof}

\subsection{Descending singular principal $G$-bundles}

Let $r\in\mathbb{N}$, $d\in\mathbb{Z}$ and $\underline{e}:=(e_1,\hdots,e_{\nu})\in\mathbb{N}^{\nu}$ with $e_i\leq r$.
Let $(\mathscr{E},\underline{q},\tau)$ be a singular principal $G$-bundle with generalized parabolic structure on $Y$ with rank $r$, degree $d$ and type $\underline{e}$. Consider the natural surjection 
$
\textrm{ev}_{D}=\oplus \textrm{ev}_{i}:\mathscr{E}\rightarrow\mathscr{E}|_{D}=\bigoplus \mathscr{E}|_{D_{i}}
$
and take the push-forward,
$\pi_{*}(\textrm{ev}_{D}):\pi_{*}(\mathscr{E})\rightarrow\pi_{*}(\mathscr{E}|_{D})$.
Since $\pi_{*}(\mathscr{E}|_{D})$ is precisely the vector space $\bigoplus(\mathscr{E}(y_{1}^{i})\oplus\mathscr{E}(y_{2}^{i}))$ supported on the nodes, we can consider $R=\bigoplus R_{i}$ as a skycraper sheaf supported on the nodes and compose $\pi_{*}(\textrm{ev}_{D})$ with $q$ to get the morphism
$q\circ\pi_{*}(\textrm{ev}_{D}):\pi_{*}(\mathscr{E})\rightarrow R\rightarrow 0$.
Defining $\mathscr{F}=\textrm{Ker}(q\circ\pi_{*}(\textrm{ev}_{D}))$, we get an exact sequence
\begin{equation}\label{induced}
0\rightarrow\mathscr{F}\hookrightarrow\pi_{*}(\mathscr{E})\overset{p}{\rightarrow} R\rightarrow 0
\end{equation}
where $\mathscr{F}$ is a torsion free sheaf of rank $r$ and degree $d+\sum_{i=1}^{\nu}(r-e_{i})$,  and $R$ has length $\textrm{length}(R):=e_{1}+\hdots+e_{\nu}$.

It remains to construct $\tau':\underline{\textrm{Spec}}(\mathscr{F}\otimes V)^{G}\rightarrow\mathscr{O}_{X}$ from the data $(\mathscr{E},\underline{q},\tau)$. Consider the 
canonical isomorphism,
$\pi^{*}(\underline{\textrm{Spec}}(\mathscr{F}\otimes V)^{G})\simeq\underline{\textrm{Spec}}(\pi^{*}(\mathscr{F})\otimes V)^{G}$.
Now, the identity map $\pi_{*}\mathscr{E}\rightarrow \pi_{*}\mathscr{E}$ induces a morphism $\pi^{*}\pi_{*}\mathscr{E}\rightarrow \mathscr{E}$ by adjunction and therefore a morphism of algebras $\pi^{*}S^{\bullet}(V\otimes \pi_{*}\mathscr{E})^{G}\rightarrow S^{\bullet}(V\otimes \mathscr{E})^{G}$ which, in turn, induces a morphism of algebras $S^{\bullet}(V\otimes \pi_{*}\mathscr{E})^{G}\rightarrow \pi_{*}S^{\bullet}(V\otimes \mathscr{E})^{G}$ again by adjunction. This induces a diagram
\begin{equation*}\label{adjuncion}
\xymatrix{
& S^{\bullet}(V\otimes \mathscr{F})^{G}\ar[r]\ar[rd]^{\tau'} & S^{\bullet}(V\otimes\pi_{*}\mathscr{E})^{G} \ar[d]^{\hat{\tau}} & &\\
0\ar[r]&\mathscr{O}_{X} \ar@{^(->}[r] & \pi_{*}\mathscr{O}_{Y}\ar[r] & \bigoplus_{i=1}^{\nu}\mathbb{C}_{x_{i}}\ar[r]& 0 
}
\end{equation*}

\begin{remark}\label{funtorial-des}
Let $(\mathscr{E},\underline{q})$ be a generalized parabolic locally free sheaf of rank $r$, degree $d$ and type $\underline{e}'=(e'_1,\hdots,e'_{\nu})$. For each $i=1,\hdots,\nu$, denote by $K_{i}$ the kernel of the $i$th parabolic structure $\mathscr{E}(y_{1}^{i})\oplus\mathscr{E}(y_{2}^{i})\rightarrow R_{i}$ and by $C_{1}^{i}$ (resp. $C_{2}^{i}$) the kernel of the induced linear map $K_{i}\rightarrow \mathscr{E}(y_{1}^{i})$ (resp. $K_{i}\rightarrow \mathscr{E}(y_{2}^{i})$). From \cite[Proposition 3.7]{bosle-multiple}, it follows that the associated torsion free sheaf $\mathscr{F}$ satisfies $\mathscr{F}_{x_{i}}\simeq \mathscr{O}_{X}^{e_{i}}\oplus \mathfrak{m}_{x_{i}}^{r-e_{i}}$, where $e_{i}=2r-e'_{i}-\textrm{dim}(C_{1}^{i})-\textrm{dim}(C_{2}^{i})$.
\end{remark}

\begin{definition}\label{descienden}
Let $r\in\mathbb{N}$, $d\in\mathbb{Z}$ and $\underline{e}:=(e_1,\hdots,e_{\nu})\in\mathbb{N}^{\nu}$ with $e_i\leq r$.
A descending $G$-bundle of rank $r$, degree $d$ and type $\underline{e}$ on $Y$ is a singular principal $G$-bundle with generalized parabolic structure of rank $r$, degree $d$ and type $\underline{e}$, $(\mathscr{E},\underline{q},\tau)$, such that $\tau'$ takes values in $\mathscr{O}_{X}\subset\pi_{*}(\mathscr{O}_{Y})$. 
\end{definition}

\begin{definition}
Let $r\in\mathbb{N}$, $d\in\mathbb{Z}$, $\underline{e}:=(e_1,\hdots,e_{\nu})\in\mathbb{N}^{\nu}$, and let $\delta\in\mathbb{Q}_{>0}$. For each $i\in I(\underline{e})$ fix $\kappa_{i}\in(0,\dfrac{e_{i}}{r})\cap\mathbb{Q}$.
A descending $G$-bundle is $(\underline{\kappa},\delta)$-(semi)stable if it is as singular principal $G$-bundle with generalized parabolic structure.
\end{definition}
A family of descending $G$-bundles parametrized by a scheme $S$ is defined in the obvious way, and we can consider the moduli functor, 
$$
\textbf{D}(\rho)_{r,d,\underline{e}}^{(\underline{\kappa},\delta)\textrm{-(s)s}}(S)=\left\{   \begin{array}{l}\textrm{isomorphism classes of families of}\\ 
(\underline{\kappa},\delta)\textrm{-(semi)stable} \textrm{ descending} \\ 
G\textrm{-bundles on $Y$ parametrized by} \\ 
\textrm{$S$ with rank $r$}\textrm{ degree }d \textrm{ and type }\underline{e}\\
\end{array}  \right\}.
$$

Then one can show the next theorem following a similar argument as given for proving Theorem \ref{CC} and \cite[Main Theorem]{Alexander1}.
\begin{theorem}\label{D}
There exist a projective scheme $\emph{D}(\rho)_{r,d,\underline{e}}^{(\underline{\kappa},\delta)\text{-ss}}$ and an open subscheme $\emph{D}(\rho)_{r,d,\underline{e}}^{(\underline{\kappa},\delta)\text{-s}}\subset \emph{D}(\rho)_{r,d,\underline{e}}^{(\underline{\kappa},\delta)\text{-ss}}$ together with a natural tranformation
$
\alpha^{(s)s}:\emph{\textbf{D}}(\rho)_{r,d,\underline{e}}^{(\underline{\kappa},\delta)\textrm{-(s)s}}\rightarrow h_{\emph{D}(\rho)^{(\underline{\kappa},\delta)\text{-(s)s}}}
$
with the following properties:

1) For any scheme $S$ and any natural transformation $\alpha': \emph{\textbf{D}}(\rho)_{r,d,\underline{e}}^{(\underline{\kappa},\delta)\emph{-(s)s}} \rightarrow h_{S}$, there exists a unique morphism $\varphi:\emph{D}(\rho)_{r,d,\underline{e}}^{(\underline{\kappa},\delta)\textrm{-(s)s}}\rightarrow S$ with $\alpha'=h(\varphi)\circ\alpha^{(s)s}$.

2) The scheme $\emph{D}(\rho)_{r,d,\underline{e}}^{(\underline{\kappa},\delta)\textrm{-s}}$ is a coarse moduli space for the moduli functor $\emph{\textbf{D}}(\rho)_{r,d,\underline{e}}^{(\underline{\kappa},\delta)\textrm{-s}}$.
\end{theorem}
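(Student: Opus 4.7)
The plan is to realize the moduli space as a GIT quotient of a closed, $\textrm{GL}(U)$-invariant subscheme of the parameter space $\mathfrak{M}(G)$ constructed in Section \ref{singularbundlespace}, then apply the same machinery used in Theorem \ref{CC}. First I would produce a parameter space $\mathfrak{D}(G) \subset \mathfrak{M}(G)$ for the rigidified problem. Over $\mathfrak{M}(G) \times Y$ we have the universal datum $(\mathscr{E}_{\mathfrak{M}(G)}, \underline{q}_{\mathfrak{M}(G)}, \tau_{\mathfrak{M}(G)})$; applying the construction of Section 4.2 relatively (forming the family of torsion free sheaves $\mathscr{F}_{\mathfrak{M}(G)}$ as the kernel of the composition $\pi_*\mathscr{E}_{\mathfrak{M}(G)} \to \pi_*(\mathscr{E}_{\mathfrak{M}(G)}|_{D}) \to R_{\mathfrak{M}(G)}$ on $\mathfrak{M}(G) \times X$, which is flat by the universal quotient property) yields a universal morphism of $\mathscr{O}_X$-algebras $\tau'_{\mathfrak{M}(G)}\colon S^\bullet(V \otimes \mathscr{F}_{\mathfrak{M}(G)})^G \to \pi_*\mathscr{O}_Y$. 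Composing with the quotient $\pi_*\mathscr{O}_Y \twoheadrightarrow \bigoplus_i \mathbb{C}_{x_i}$ of Remark following Definition \ref{descienden} produces a morphism of sheaves, and the descending condition is precisely that this composition vanishes. Since the target is coherent and supported at finitely many points, vanishing is cut out by a closed subscheme $\mathfrak{D}(G) \hookrightarrow \mathfrak{M}(G)$.

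Next I would verify that $\mathfrak{D}(G)$ inherits the $\textrm{GL}(U)$-action from $\mathfrak{M}(G)$ (the descending condition involves only $\tau$ and $\underline{q}$, which transform equivariantly), that the restriction of the universal family to $\mathfrak{D}(G)$ satisfies the local universal property and the glueing property for the functor $\textbf{D}(\rho)_{r,d,\underline{e}}^{(\underline{\kappa},\delta)\text{-(s)s}}$, and that $\mathfrak{D}(G)$ contains exactly the rigidified families of descending bundles with the prescribed numerical invariants. The decomposition $\textrm{GL}(U) = \mathbb{C}^* \times \textrm{SL}(U)$ is then used as in Theorem \ref{CC}: quotienting first by $\mathbb{C}^*$ and composing the resulting injective, $\textrm{SL}(U)$-equivariant proper morphism $\beta\colon \mathfrak{M}(G) \slas \mathbb{C}^* \hookrightarrow \mathfrak{I}_{r,d,\mathfrak{tp}}$ with the closed immersion $\mathfrak{D}(G) \slas \mathbb{C}^* \hookrightarrow \mathfrak{M}(G) \slas \mathbb{C}^*$ gives an $\textrm{SL}(U)$-equivariant injective proper morphism into the Gieseker space.

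Pulling back the linearized ample sheaf $\mathscr{L}$ from Theorem \ref{theorem E} along this composition yields a linearization whose (semi)stable locus coincides, by Theorem \ref{melon3} and \cite[Theorem 5.5]{AMC}, with the $(\underline{\kappa},\delta)$-(semi)stable locus of $\mathfrak{D}(G)$. The GIT quotient then gives a projective scheme $\textrm{D}(\rho)_{r,d,\underline{e}}^{(\underline{\kappa},\delta)\text{-ss}}$ containing the geometric quotient $\textrm{D}(\rho)_{r,d,\underline{e}}^{(\underline{\kappa},\delta)\text{-s}}$ as an open subscheme, and the universal properties (1) and (2) follow from the local universal property plus the glueing property exactly as in Theorem \ref{CC} and \cite[Main Theorem]{Alexander1}.

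The main obstacle I anticipate is the careful verification that the descending condition defines a genuinely closed subscheme, which requires checking that the formation of $\mathscr{F}_S$ and $\tau'_S$ commutes with base change on $S$; flatness of $\mathscr{F}_S$ (needed for base change of $\pi_*$) follows from the flatness of $\pi_*\mathscr{E}_S$ together with the locally free quotient $\underline{q}_S$, but some care is required because $X$ is singular and $\pi$ is finite but not flat. A secondary technical point is verifying that the linearization pulled back from $\mathfrak{I}_{r,d,\mathfrak{tp}}$ still detects precisely the $(\underline{\kappa},\delta)$-(semi)stable orbits when restricted to $\mathfrak{D}(G)$; this is automatic from injectivity of $\beta$ composed with the closed immersion and the identification $\textrm{Gies}^{-1}(\mathbb{G}^{(s)s}) = \mathfrak{I}_{\underline{d}}^{(\underline{\kappa},\delta)\text{-(s)s}}$ of Theorem \ref{melon3}.
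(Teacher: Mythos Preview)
Your proposal is correct and follows essentially the same approach as the paper, which does not give a written proof but only remarks that the argument is analogous to that of Theorem \ref{CC} and \cite[Main Theorem]{Alexander1}. Your outline is in fact a faithful expansion of precisely that strategy: cut out the descending locus as a closed $\textrm{GL}(U)$-invariant subscheme of $\mathfrak{M}(G)$, pass through the $\mathbb{C}^*$-quotient into $\mathfrak{I}_{r,d,\mathfrak{tp}}$, pull back the linearization, and invoke Theorem \ref{melon3} together with the local universal and glueing properties.
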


\subsection{Relation to the moduli space of principal \emph{G}-bundles over a reducible nodal curve. Specializations}\label{inducedfiltration}

Let $r\in\mathbb{N}$, $d\in\mathbb{Z}$ and $\underline{e}:=(e_1,\hdots,e_{\nu})\in\mathbb{N}^{\nu}$ with $e_i\leq r$.
Let  $(\mathscr{E},\underline{q},\tau)$ be a descending $G$-bundle of rank $r$, degree $d$ and type $\underline{e}$, and $(\mathscr{F},\tau')$ the induced singular principal $G$-bundle. Recall that both sheaves, $\mathscr{E}$ and $\mathscr{F}$, are related through the exact sequence given in Equation (\ref{induced})
where the morphism $p$ factorizes over the surjection $q:\pi_{*}(\mathscr{E}|_{D})\rightarrow R$.  For any subsheaf $\G\subset\mathscr{E}$, the image of  $p$ restricted to $\pi_{*}(\G)\subset\pi_{*}(\mathscr{E})$ is precisely $\bigoplus_{i=1}^{\nu}q_{i}(\G(y_{1}^{i})\oplus\G(y_{2}^{i}))$. Therefore we can construct the following diagram
\begin{equation}\label{saturatedsub}
\xymatrix{
0\ar[r] & \mathscr{F}\ar@{^(->}[r] & \pi_{*}(\mathscr{E})\ar[r] & R\ar[r] & 0\\
0\ar[r] & \textrm{Ker}(p')\ar@{^(->}[r]\ar@{^(-->}[u] & \pi_{*}(\G)\ar[r]\ar@{^(->}[u] &  \bigoplus_{i=1}^{\nu}q_{i}(\G(y_{1}^{i})\oplus\G(y_{2}^{i}))\ar[r]\ar@{^(->}[u] & 0
}
\end{equation}
and we define $S(\G):=\textrm{Ker}(p')$. If $\G$ is saturated then $S(\G)$ is clearly saturated. This construction allows us to attach to any weighted filtration $(\mathscr{E}_{\bullet},\underline{m})$ of $\mathscr{E}$ by saturated sheaves a weighted filtration $(S(\mathscr{E}_{\bullet}),\underline{m})$ of $\mathscr{F}$ by saturated sheaves. Moreover, any saturated subsheaf can be constructed from a saturated subsheaf of $\mathscr{E}$ (follows as in the connected case \cite{Alexander1}).

In what follows, we will use the notation $\underline{\kappa}(\underline{e})$ for $(\dfrac{e_{i_1}}{r},\hdots,\dfrac{e_{i_{\nu'}}}{r})$, where $i_1,\hdots,i_{\nu'}$ are the indices in $I(\underline{e})$.

\begin{proposition}\label{propimp1}
Let $(\mathscr{E},\underline{q},\tau)$ be a descending $G$-bundle of rank $r$ degree $d$ and type $\underline{e}$ and $(\mathscr{F},\tau')$ the induced singular principal $G$-bundle on $X$. 
Then, $(\mathscr{F},\tau')$ is $\delta$-(semi)stable if and only if $(\mathscr{E},\underline{q},\tau)$ is a $(\underline{\kappa}(\underline{e}),\delta)$-(semi)stable $G$-bundle with a generalized parabolic structure.

\end{proposition}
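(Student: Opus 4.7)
The plan is to translate the $(\underline{\kappa}(\underline{e}),\delta)$-semistability inequality on $Y$ into the $\delta$-semistability inequality on $X$ term by term, via the correspondence $\mathscr{G}\mapsto S(\mathscr{G})$ between saturated subsheaves of $\mathscr{E}$ and saturated subsheaves of $\mathscr{F}$ established just before the statement. Both semistability notions may be tested on weighted filtrations by saturated subsheaves (passing to the saturation only improves the defining inequalities), and since the construction $\G\mapsto S(\G)$ is a bijection at the level of saturated filtrations, it suffices to prove that for every weighted filtration $(\mathscr{E}_\bullet,\underline{m})$ by saturated subsheaves,
\[
P_{\underline{\kappa}(\underline{e})}(\mathscr{E}_\bullet,\underline{m}) + \delta\,\mu(\mathscr{E}_\bullet,\underline{m},\tau) \;=\; P(S(\mathscr{E}_\bullet),\underline{m}) + \delta\,\mu(S(\mathscr{E}_\bullet),\underline{m},\tau').
\]

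To compare the $P$-terms I would use the defining exact sequence of $S(\mathscr{G})$ in Diagram \eqref{saturatedsub}, which yields $\chi(S(\mathscr{G})) = \chi(\mathscr{G}) - \sum_i \dim q_i(\mathscr{G}(y_1^i)\oplus\mathscr{G}(y_2^i))$; combined with $\chi(\mathscr{O}_Y) - \chi(\mathscr{O}_X) = \nu$ this gives
\[
\deg S(\mathscr{G}) \;=\; \deg\mathscr{G} + \textrm{rk}(\mathscr{G})\,\nu - \sum_i \dim q_i(\mathscr{G}(y_1^i)\oplus\mathscr{G}(y_2^i)).
\]
Moreover $\alpha(S(\mathscr{G})) = h\,\textrm{rk}(S(\mathscr{G})) = h\,\textrm{rk}(\mathscr{G}) = \alpha(\mathscr{G})$, since $S(\mathscr{G})$ and $\pi_*\mathscr{G}$ agree at the smooth points of $X$. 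Now the choice $\kappa_i = e_i/r$ makes the factor $\kappa_i\,r/e_i$ collapse to $1$, so $\underline{\kappa}(\underline{e})$-$\mathrm{pardeg}(\mathscr{G}) = \deg\mathscr{G} - \sum_i \dim q_i(\mathscr{G}(y_1^i)\oplus\mathscr{G}(y_2^i))$. Substituting these identities into both $P$-expressions and expanding, the contributions $h\,\textrm{rk}(\mathscr{E}_i)\,\nu\,r$ cancel between the two sides, leaving the two expressions literally equal.

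For the $\mu$-terms, recall that both are defined through the associated swamps $\varphi_\tau$, $\varphi_{\tau'}$ and depend only on the multiplicities $\alpha_i$ and on the pattern of vanishing of the restrictions to tensor summands indexed by multi-indices $I\in J$. The multiplicities match, as observed above, so the vector $\Gamma$ is identical on the two sides. What remains to check is the equivalence
\[
\varphi_{\tau}\bigl|_{(\mathscr{E}_{i_1}\otimes\cdots\otimes\mathscr{E}_{i_a})^{\oplus b}} = 0 \;\iff\; \varphi_{\tau'}\bigl|_{(S(\mathscr{E}_{i_1})\otimes\cdots\otimes S(\mathscr{E}_{i_a}))^{\oplus b}} = 0,
\]
which I would derive from the adjunction square used to construct $\tau'$ from $\tau$. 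Outside the nodes, $\pi$ is an isomorphism and $S(\mathscr{E}_i)$ coincides with $\pi_*\mathscr{E}_i$; the descending hypothesis $\tau'(S^\bullet(V\otimes\mathscr{F})^G)\subset\mathscr{O}_X\subset\pi_*\mathscr{O}_Y$ guarantees that the generic nonvanishing of $\varphi_\tau$ on one side is transported faithfully to the other.

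The main obstacle is verifying this swamp-restriction equivalence: one must confirm that the adjunction defining $\tau'$ and the passage from $S(\mathscr{E}_i)$ to $\pi_*\mathscr{E}_i$ (which disagree precisely over the nodes) neither create nor destroy nontrivial components of the associated swamp. This is essentially a local computation at each nodal point $x_j$, where the descending condition ensures that the image algebra lies inside $\mathscr{O}_X$ rather than in $\pi_*\mathscr{O}_Y$, so that the vanishing loci of the swamp on $Y$ descend to vanishing loci on $X$ of the same codimension. Once this is in place, the two displayed identities combine to give the stated equivalence of semistability conditions.
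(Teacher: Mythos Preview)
Your proposal is correct and follows exactly the approach the paper has in mind: the paper's entire proof is the single sentence ``This follows as in the irreducible case \cite[Proposition 5.2.2]{Alexander1}'', and what you have written is precisely a sketch of that cited argument---matching $P$-terms via the Euler-characteristic identity coming from Diagram~\eqref{saturatedsub} and the relation $\chi(\mathscr{O}_Y)-\chi(\mathscr{O}_X)=\nu$, and matching $\mu$-terms via the adjunction defining $\tau'$ together with the descending hypothesis.

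One small point to tighten: your degree formula $\deg S(\mathscr{G}) = \deg\mathscr{G} + \textrm{rk}(\mathscr{G})\,\nu - \sum_i \dim q_i(\cdots)$ and the displayed cancellation ``$h\,\textrm{rk}(\mathscr{E}_i)\,\nu\,r$'' tacitly assume a single well-defined rank, which is literally true only in the irreducible case. In the present reducible setting subsheaves may have non-uniform multirank; however, since the $P$-functions are built from the multiplicities $\alpha_i=\sum_k h_k\,\textrm{rk}(\mathscr{E}_i|_{Y_k})$ and these agree with the multiplicities of $S(\mathscr{E}_i)$ on $X$, the cancellation $r\nu\alpha_i - \alpha\cdot(\text{rank term for }\mathscr{E}_i)$ still goes through verbatim once you phrase it at the level of $\chi$'s (indeed $\textrm{par}\chi(\mathscr{G})=\chi(S(\mathscr{G}))$ directly, so the $\chi$-version of the $P$-identity is immediate). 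This is the only adaptation needed beyond Schmitt's irreducible argument.
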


\begin{proof}
This follows as in the irreducible case \cite[Proposition 5.2.2]{Alexander1}
\end{proof}

\begin{proposition}\label{propimp2}

Let $r\in\mathbb{N}$, $d\in\mathbb{Z}$ and $\underline{e}:=(e_1,\hdots,e_{\nu})\in\mathbb{N}^{\nu}$ with $e_i\leq r$.
There exists $\epsilon\in\mathbb{R}\cap (0,1)$, such that for any $\underline{\kappa}$ with  $\dfrac{e_{i}}{r}-\epsilon<\kappa_{i}<\dfrac{e_{i}}{r}$, any integral parameter $\delta$, and any singular principal $G$-bundle $(\mathscr{E},\underline{q},\tau)$ with a generalized parabolic structure of rank $r$, degree $d$ and type $\underline{e}$, we have

1) if $(\mathscr{E},\underline{q},\tau)$ is $(\underline{\kappa},\delta)$-semistable, then it is $(\underline{\kappa}(\underline{e}),\delta)$-semistable,

2) if $(\mathscr{E},\underline{q},\tau)$ is $(\underline{\kappa}(\underline{e}),\delta)$-stable, then it is $(\underline{\kappa},\delta)$-stable.
\end{proposition}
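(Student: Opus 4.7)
The plan is to compare the two semistability functions and exploit an integrality argument. For any weighted filtration $(\mathscr{E}_{\bullet}, \underline{m})$ by saturated subsheaves, a direct expansion of the definitions, together with the tautology $\dim q_j(\mathscr{E}(y_1^j) \oplus \mathscr{E}(y_2^j)) = e_j$ for the ambient sheaf, yields
\begin{align*}
P_{\underline{\kappa}}(\mathscr{E}_{\bullet}, \underline{m}) - P_{\underline{\kappa}(\underline{e})}(\mathscr{E}_{\bullet}, \underline{m}) = \sum_{u=1}^{s} m_u \sum_{j \in I(\underline{e})} \left( \tfrac{e_j}{r} - \kappa_j \right) \left[ r \alpha_u - \tfrac{r \alpha}{e_j} \dim q_j(\mathscr{E}_u(y_1^j) \oplus \mathscr{E}_u(y_2^j)) \right].
\end{align*}
In particular, the function $\mu(\mathscr{E}_{\bullet}, \underline{m}, \tau)$ is identical for both parabolic weights, and the two semistability conditions differ only by the above linear-in-$\underline{\kappa}$ correction.

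The first step is to bound this correction uniformly over the weighted filtrations that matter. Passing to the associated swamp under Equation \ref{swamp-bundle}, Remark \ref{finiteset} tells us that both $(\underline{\kappa}, \delta)$- and $(\underline{\kappa}(\underline{e}), \delta)$-semistability can be verified by testing only weighted filtrations with $m_u < A$, where $A$ depends solely on $r$ and on $\rho$ (through $a, b$). Since the multiplicities $\alpha_u$ are strictly increasing positive integers bounded by $\alpha$, the length $s$ of such a filtration is at most $\alpha$; since each bracket in the formula above is bounded in absolute value by $r\alpha$, we obtain a uniform constant $C_0 = C_0(r, d, \underline{e}, \rho)$ such that
\begin{equation*}
| P_{\underline{\kappa}}(\mathscr{E}_{\bullet}, \underline{m}) - P_{\underline{\kappa}(\underline{e})}(\mathscr{E}_{\bullet}, \underline{m}) | \leq \epsilon \cdot C_0
\end{equation*}
for every weighted filtration with $m_u < A$, where $\epsilon := \max_{j \in I(\underline{e})}(e_j/r - \kappa_j)$. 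Crucially, $C_0$ does not involve $\delta$.

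The key input is integrality. At $\kappa_j = e_j/r$ the factor $\kappa_j \cdot \tfrac{r}{e_j} = 1$ is an integer, so $\underline{\kappa}(\underline{e})$-$\mathrm{pardeg}$ takes integer values on locally free subsheaves of $\mathscr{E}$. Combined with integrality of $\delta$ (by hypothesis) and of $\mu(\mathscr{E}_{\bullet}, \underline{m}, \tau)$ (it is the negative of the minimum of integer linear combinations of the $\Gamma^{(\alpha_u)}$ components), this shows $P_{\underline{\kappa}(\underline{e})}(\mathscr{E}_{\bullet}, \underline{m}) + \delta \mu(\mathscr{E}_{\bullet}, \underline{m}, \tau) \in \mathbb{Z}$ for every weighted filtration. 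Choose now $\epsilon < 1/C_0$. Both claims then follow by a sign argument. For (1), if $(\mathscr{E}, \underline{q}, \tau)$ were not $(\underline{\kappa}(\underline{e}), \delta)$-semistable, a witness filtration (which by Remark \ref{finiteset} may be taken with $m_u < A$) gives $P_{\underline{\kappa}(\underline{e})} + \delta \mu \leq -1$; adding the bounded perturbation keeps this strictly negative and contradicts $(\underline{\kappa}, \delta)$-semistability. For (2), $(\underline{\kappa}(\underline{e}), \delta)$-stability forces $P_{\underline{\kappa}(\underline{e})} + \delta \mu \geq 1$ for every nontrivial filtration, and subtracting a quantity strictly less than $1$ preserves positivity, yielding $(\underline{\kappa}, \delta)$-stability.

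The main obstacle is the uniform bound on the correction term in the second step: a priori the difference $P_{\underline{\kappa}} - P_{\underline{\kappa}(\underline{e})}$ could grow both with the weights $m_u$ and with the length of the filtration. Remark \ref{finiteset} controls the weights, while the integrality of multiplicities controls the length; it is exactly this finiteness that allows one to choose a single $\epsilon$ that works simultaneously for all integral $\delta$ and for all singular principal $G$-bundles with the fixed discrete invariants $r, d, \underline{e}$.
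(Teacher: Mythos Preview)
Your approach is essentially the same as the paper's: bound the difference $P_{\underline{\kappa}} - P_{\underline{\kappa}(\underline{e})}$ uniformly over the restricted class of weighted filtrations from Remark~\ref{finiteset}, invoke a discreteness property of $P_{\underline{\kappa}(\underline{e})} + \delta\mu$, and choose $\epsilon$ small enough that the perturbation cannot cross the gap. One technical point: you assert $P_{\underline{\kappa}(\underline{e})}(\mathscr{E}_{\bullet},\underline{m}) + \delta\mu(\mathscr{E}_{\bullet},\underline{m},\tau) \in \mathbb{Z}$, justifying this via integrality of $\mu$. But $\mu$ is built from $\Gamma = \sum m_u \Gamma^{(\alpha_u)}$, so its integrality requires the weights $m_u$ themselves to be integers, and Remark~\ref{finiteset} only says $m_u < A$, not that they are integral. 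The paper handles this more cautiously, claiming only that the expression lies in $\mathbb{Z}[1/n]$ for some fixed $n$ (determined by the finite test set), and then choosing $\epsilon$ so that $\nu r \epsilon A \alpha^{2} < 1/n$. Your argument goes through verbatim with this adjustment.
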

\begin{proof}
Recall that the $(\underline{\kappa},\delta)$-(semi)stability condition for a singular principal $G$-bundle with a generalized parabolic structure has to be checked just for the weighted filtrations $(\mathscr{E}^{\bullet},\underline{m})$ of $\mathscr{E}$ for which $m_{i}<A$ for  suitable constant $A$ depending only on the numerical input data (see Remark \ref{finiteset}). This implies that we can find a natural number $n$ such that
$P_{\underline{\kappa}(\underline{e})}(\mathscr{E}^{\bullet},\underline{m})+\delta\mu(\mathscr{E}^{\bullet},\underline{m},\tau)\in\mathbb{Z}[\dfrac{1}{n}]$
for all such weighted filtrations. A short calculation shows that for every generalized parabolic bundle $(\mathscr{E},\underline{q})$ and every weighted filtration $(\mathscr{E}_{\bullet},\underline{m})$ we have
$P_{\underline{\kappa}(\underline{e})}(\mathscr{E}^{\bullet},\underline{m})-P_{\underline{\kappa}}(\mathscr{E}^{\bullet},\underline{m})\leq \nu r\epsilon A \alpha^{2} $.
In fact we can also show that $P_{\underline{\kappa}(\underline{e})}(\mathscr{E}^{\bullet},\underline{m})-P_{\underline{\kappa}}(\mathscr{E}^{\bullet},\underline{m})\geq - \nu r\epsilon A \alpha^{2} $. 
Take $\epsilon$ so that the inequality $\nu r\epsilon A \alpha^{2} < \dfrac{1}{n}$ holds. Now 1) and 2) follow by a similar argument as given in \cite[Proposition 5.2.3.]{Alexander1}.
\end{proof}

Let $r\in\mathbb{N}$, $d\in\mathbb{Z}$ and $\underline{e}\in J(r):=\{(e_1,\hdots,e_{\nu})\in\mathbb{N}^{\nu}| e_i\leq r\}$. Denote by $\mathfrak{D}_{r,d(\underline{e},r),\underline{e}}$ the set of isomorphism classes of descending $G$-bundles over $Y$ with rank $r$ type $\underline{e}$ and degree $d(\underline{e},r)=d-\sum_{i=1}^{\nu}(r-e_{i})$, and by $\mathfrak{SPB}_{r,d,\underline{e}}$ the set of isomorphism classes of singular principal $G$-bundles over $X$ of rank $r$ degree $d$ and type $\underline{e}$. From Corollary \ref{length}, it follows that there is a map
$
\Theta_{\underline{e}}:\mathfrak{D}_{r,d(\underline{e},r),\underline{e}}\longrightarrow \bigcup_{\underline{e}'\leq \underline{e}}\mathfrak{SPB}_{r,d,\underline{e}'}
$

\begin{theorem}\label{equivsets}
$\Theta_{\underline{e}}$ induces a bijection $\Theta_{\underline{e}}^{-1}(\mathfrak{SPB}_{r,d,\underline{e}})\rightarrow \mathfrak{SPB}_{r,d,\underline{e}}$.
\end{theorem}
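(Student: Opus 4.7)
The plan is to exhibit an explicit inverse $\Psi_{\underline{e}}$ to $\Theta_{\underline{e}}$ and verify that the two compositions are the identity. Given $(\mathscr{F},\tau')\in \mathfrak{SPB}_{r,d,\underline{e}}$ (so $\tau':S^{\bullet}(V\otimes \mathscr{F})^{G}\to \mathscr{O}_{X}$), I would first define $\mathscr{E}:=\pi^{*}\mathscr{F}/T(\mathscr{F})$, which is locally free of rank $r$ on $Y$; Equation (\ref{degreespullback}) immediately gives $\textrm{deg}(\mathscr{E})=d-\sum_{i}(r-e_{i})=d(\underline{e},r)$. Proposition \ref{canonical-bundle} then supplies the canonical inclusion $\beta:\mathscr{F}\hookrightarrow \pi_{*}\mathscr{E}$ with cokernel $\bigoplus_{i}\mathbb{C}_{x_{i}}^{e_{i}}$; since this cokernel is a skyscraper, the surjection $\pi_{*}\mathscr{E}\twoheadrightarrow R:=\bigoplus R_{i}$ factors through the restriction $\pi_{*}\mathscr{E}\twoheadrightarrow \pi_{*}(\mathscr{E}|_{D})=\bigoplus_{i}(\mathscr{E}(y_{1}^{i})\oplus \mathscr{E}(y_{2}^{i}))$, producing the parabolic quotients $q_{i}$ of dimension $e_{i}$. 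To construct the algebra morphism $\tau:S^{\bullet}(V\otimes \mathscr{E})^{G}\to \mathscr{O}_{Y}$, I would use the locally free quotient $\pi^{*}\mathscr{F}\twoheadrightarrow \mathscr{E}$ and the reductivity of $G$: the pulled-back morphism $\pi^{*}\tau':S^{\bullet}(V\otimes \pi^{*}\mathscr{F})^{G}\to \mathscr{O}_{Y}$ must kill the ideal generated by the torsion submodule $V\otimes T(\mathscr{F})$ (since $\mathscr{O}_{Y}$ is torsion free), and therefore factors uniquely through $S^{\bullet}(V\otimes \mathscr{E})^{G}$. The triple $(\mathscr{E},\underline{q},\tau)$ is descending by construction, and $\Theta_{\underline{e}}\circ \Psi_{\underline{e}}=\textrm{id}$ follows by unwinding the exact sequence (\ref{induced}) together with the algebra diagram preceding Definition \ref{descienden}.

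For the opposite composition, let $(\mathscr{E}',\underline{q}',\tau_{0})$ be a descending $G$-bundle in $\Theta_{\underline{e}}^{-1}(\mathfrak{SPB}_{r,d,\underline{e}})$ with image $(\mathscr{F},\tau')$. The crucial input is Corollary \ref{length} applied to the injection $\mathscr{F}\hookrightarrow \pi_{*}\mathscr{E}'$ from (\ref{induced}), whose cokernel has length $e=\sum e_{i}$, together with Proposition \ref{canonical-bundle}, which gives $l=\sum e_{i}$ because $\mathscr{F}$ has type exactly $\underline{e}$. Hence the canonical morphism $\lambda:\mathscr{E}_{0}\hookrightarrow \mathscr{E}'$ satisfies $\textrm{length}(\textrm{Coker}(\pi_{*}\lambda))=e-l=0$, so $\pi_{*}\lambda$ is an isomorphism; finiteness of $\pi$ then forces $\lambda$ itself to be an isomorphism, identifying $\mathscr{E}'$ with the sheaf $\mathscr{E}_{0}$ produced by the $\Psi_{\underline{e}}$ construction. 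Under this identification, $\underline{q}'$ is pinned down by the requirement that the composition $\pi_{*}(\mathscr{E}'|_{D})\twoheadrightarrow R$ fit into (\ref{induced}), and $\tau_{0}$ is pinned down by $\tau'$: on the dense open $Y\setminus \pi^{-1}(\mathrm{nodes})$ where $\pi$ is an isomorphism the two morphisms coincide, and the extension across the nodes is forced by $\mathscr{E}'$ being locally free (so $S^{\bullet}(V\otimes \mathscr{E}')^{G}$ is torsion free over $\mathscr{O}_{Y}$). This yields $\Psi_{\underline{e}}\circ \Theta_{\underline{e}}=\textrm{id}$ and hence the claimed bijection.

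The main technical obstacle will be the well-definedness of $\tau$ from $\tau'$ in the first paragraph: one must justify that $\pi^{*}\tau'$ really does factor through the quotient $S^{\bullet}(V\otimes \pi^{*}\mathscr{F})^{G}\twoheadrightarrow S^{\bullet}(V\otimes \mathscr{E})^{G}$, which rests on the Reynolds operator for reductive $G$ (so that invariants commute with the quotient by the $G$-invariant ideal generated by $V\otimes T(\mathscr{F})$) together with the elementary fact that any morphism from a torsion sheaf to $\mathscr{O}_{Y}$ vanishes. Once this step is secured, the descending property of the constructed $(\mathscr{E},\underline{q},\tau)$ and the remaining compatibility checks amount to bookkeeping through the diagrams of Section \ref{inducedfiltration}.
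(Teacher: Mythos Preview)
Your proposal is correct and follows the same global strategy as the paper (build an explicit inverse $\Psi_{\underline{e}}$ by setting $\mathscr{E}=\pi^{*}\mathscr{F}/T(\mathscr{F})$ and extracting the parabolic structure from $\mathrm{Coker}(\beta)$), but you handle the two key technical steps differently and, arguably, more economically. For the factorization of $\pi^{*}\tau'$ through $S^{\bullet}(V\otimes\mathscr{E})^{G}$, the paper argues geometrically: the surjection of invariant algebras gives a closed immersion of relative spectra, the section $\pi^{*}(\tau)$ lands in this closed subscheme on the dense open $W=Y\setminus\pi^{-1}(\mathrm{Sing}(X))$, and therefore everywhere. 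Your torsion argument (the ideal generated by $V\otimes T(\mathscr{F})$ is supported on finitely many points, its $G$-invariants are the kernel by Reynolds, and any $\mathscr{O}_{Y}$-linear map from a torsion sheaf to the torsion-free $\mathscr{O}_{Y}$ vanishes) achieves the same conclusion purely algebraically. For injectivity, the paper compares two preimages via the Short-Five Lemma and then invokes \cite[Huiti\`eme partie, II, Proposition 10]{Seshadri} to conclude $\iota:\mathscr{E}_{0}\simeq\mathscr{E}_{1}$; your route through Corollary~\ref{length}, observing $e=l=\sum e_{i}$ so that $\mathrm{length}(\mathrm{Coker}(\pi_{*}\lambda))=0$, is more direct and makes better use of the length machinery already set up. One small quibble: in your last uniqueness step, the extension of $\tau_{0}$ across the nodes is forced because $\mathscr{O}_{Y}$ is torsion-free (or, equivalently, because the relative $\underline{\mathrm{Spec}}$ is separated over $Y$, which is how the paper phrases it), not because $S^{\bullet}(V\otimes\mathscr{E}')^{G}$ is torsion-free; the conclusion is of course unaffected.
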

\begin{remark}\label{funtorial-des2}
From Remark \ref{funtorial-des} it follows that $\Theta_{\underline{e}}^{-1}(\mathfrak{SPB}_{r,d,\underline{e}})$ consists of descending singular principal $G$-bundles $(\mathscr{E},\underline{q},\tau)\in \mathfrak{D}_{r,d(\underline{e},r),\underline{e}}$ satisfying $\textrm{dim}(C_{1}^{i})+\textrm{dim}(C_{2}^{i})=2(r-e_{i})$ for $i=1,\hdots,\nu$.
\end{remark}
\begin{proof}
\begin{enumerate}
\item Let $(\mathscr{F},\tau)$ be a singular principal $G$-bundle of rank $r$, degree $d$ and type  $\underline{e}$, and consider the exact sequence 
\begin{equation}\label{exact_norm}
\xymatrix{
0\ar[r] & T(\mathscr{F})\ar[r] & \pi^{*}(\mathscr{F})\ar[r] & \mathscr{E}_{0}= \pi^{*}\mathscr{F}/T(\mathscr{F}) \ar[r] & 0 \ .
}
\end{equation}
Since $S^{\bullet}(V\otimes\pi^{*}\mathscr{F})^{G}\rightarrow S^{\bullet}(V\otimes\mathscr{E}_{0})^{G}\rightarrow 0$
is still surjective we find a closed immersion
$\textrm{\underline{Spec}}(S^{\bullet}(V\otimes\mathscr{E}_{0})^{G}) \hookrightarrow \textrm{\underline{Spec}}(S^{\bullet}(V\otimes\pi^{*}\mathscr{F})^{G})$.
We have the following diagram
$$
\xymatrix{
\textrm{\underline{Spec}}(S^{\bullet}(V\otimes\mathscr{E}_{0})^{G})\ar@{^(->}[r]& \textrm{\underline{Spec}}(S^{\bullet}(V\otimes\pi^{*}\mathscr{F})^{G})\ar[r]\ar[d]& \textrm{\underline{Spec}}(S^{\bullet}(V\otimes\mathscr{F})^{G})\ar[d] \\
 & Y\ar[r]^{\pi}\ar@{-->}@/^{5mm}/[u]^{\pi^{*}(\tau)} & X\ar@{-->}@/^{5mm}/[u]^{\tau} \ ,
}
$$
The morphism
$\pi^{*}(\tau):\pi^{*}(S^{\bullet}(V\otimes\mathscr{F})^{G})=S^{\bullet}(V\otimes\pi^{*}\mathscr{F})^{G} \rightarrow \pi^{*}\mathscr{O}_{X}=\mathscr{O}_{Y}$
is the one that we obtain by adjunction when we take the composition of $S^{\bullet}(V\otimes\mathscr{F})^{G}\rightarrow\mathscr{O}_{X}$ with the natural inclusion of rings $\mathscr{O}_{X}\subset\pi_{*}\mathscr{O}_{Y}$.
Let us denote by $W$ the open subset $Y\setminus \pi^{-1}(\textrm{Sing}(X))$. Restricting the exact sequence (\ref{exact_norm})
to this open subset we get
$\pi^{*}\mathscr{F}|_{W}=\mathscr{E}_{0}|_{W}$
so $\textrm{\underline{Spec}}(S^{\bullet}(V\otimes\mathscr{E}_{0}|_{W})^{G})=\textrm{\underline{Spec}}(S^{\bullet}(V\otimes\pi^{*}\mathscr{F}|_{W})^{G})$
which means that the restriction $\pi^{*}(\tau|_{W})$ takes values in $\textrm{\underline{Spec}}(S^{\bullet}(V\otimes\mathscr{E}_{0}|_{W}))$. From the chain of immersions
\begin{equation*}
\textrm{\underline{Spec}}(S^{\bullet}(V\otimes\mathscr{E}_{0}|_{V})^{G}) \hookrightarrow \textrm{\underline{Spec}}(S^{\bullet}(V\otimes\mathscr{E}_{0})^{G}) \overset{\textrm{closed}}{\hookrightarrow} \textrm{\underline{Spec}}(S^{\bullet}(V\otimes\pi^{*}\mathscr{F})^{G})
\end{equation*}
it follows that $\pi^{*}(\tau)$ must then take values in $\textrm{\underline{Spec}}S^{\bullet}(V\otimes\mathscr{E}_{0})^{G}$, that is, the morphism $S^{\bullet}(V\otimes\pi^{*}\mathscr{F})^{G}\rightarrow\mathscr{O}_{Y}$ factorizes through the surjection
\begin{equation*}
S^{\bullet}(V\otimes\pi^{*}\mathscr{F})^{G}\rightarrow S^{\bullet}(V\otimes\mathscr{E}_{0})^{G}\rightarrow 0
\end{equation*} 
and we denote by $\tau_{0}$ the morphism of algebras
$S^{\bullet}(V\otimes\mathscr{E}_{0})^{G}\rightarrow\mathscr{O}_{Y}$. On the other hand, given a node $x\in X$, $\pi_{*}(\mathscr{E}_{0})_{x}\otimes_{\mathscr{O}_{X,x}}\mathscr{O}_{X,x}/\mathfrak{m}_{x}\simeq \mathscr{E}_{0}(y_{1})\oplus \mathscr{E}_{0}(y_{2})$. Therefore, the surjection $\pi_{*}(\mathscr{E}_{0})\rightarrow \textrm{Coker}(\beta)$ defined in Proposition \ref{canonical-bundle} induces a surjection $q^{0}_{i}:\mathscr{E}_{0}(y^{i}_{1})\oplus \mathscr{E}_{0}(y^{i}_{2})\rightarrow \textrm{Coker}(\beta)_{x_{i}}$ of dimension $e_{i}$ for each $i=1,\hdots,\nu$, which, in turn, induce a generalized parabolic structure of type $\underline{e}=(e_{1},\hdots,e_{\nu})$. From this construction, it follows that the singular principal $G$-bundle with generalized parabolic structure $(\mathscr{E}_{0},\tau_{0},\underline{q^{0}})$ of rank $r$, degree $d-\sum_{i=1}^{\nu}(r-e_{i})$ and type $\underline{e}=(e_{1},\hdots,e_{\nu})$ is a descending principal $G$-bundle and it descends to $(\mathscr{F},\tau)$.  This shows surjectivity.  On the oder hand, if $(\mathscr{E}_{1},\tau_1,\underline{q}^1)\in \Theta_{\underline{e}}^{-1}(\mathfrak{SPB}_{r,d,\underline{e}})$ is another singular principal $G$-bundle with generalized parabolic structure descending to $(\mathscr{F},\tau)$, then we have two exact sequences
$$
\xymatrix{
0\ar[r] & \mathscr{F}\ar[r]\ar@{=}[d]\ar@{..>}[rd]^{\psi} & \pi_{*}\mathscr{E}_{0}\ar[r] & R_{0}\ar[r] & 0\\
0\ar[r] & \mathscr{F}\ar[r] & \pi_{*}\mathscr{E}_{1}\ar[r] & R_{1}\ar[r] & 0\\
}
$$
Since $\mathscr{E}_{1}$ is locally free, the morphism $\psi$ induces a morphism $\iota:\mathscr{E}_{0}\rightarrow \mathscr{E}_{1}$ by adjunction, and therefore a morphism $\psi':\pi_{*}\mathscr{E}_{0}\rightarrow \pi_{*}\mathscr{E}_{1}$ making the left square commutative. This in turn implies that $\psi'$ induces a morphism $\psi'':R_{0}\rightarrow R_{1}$ making the right square commutative, and by the Short-Five  lemma, $\textrm{Ker}(\psi')=\textrm{Ker}(\psi'')$ and $\textrm{Coker}(\psi')=\textrm{Coker}(\psi'')$. However, $\textrm{Ker}(\psi'')$ must be a torsion sheaf while $\pi_{*}\mathscr{E}_{0}$ is torsion free, so we deduce that $\psi''$ is an isomorphism and, therefore, $\psi'$ is an isomorphism as well. From \cite[Huitime partie, II, Proposition 10]{Seshadri}, it follows that $\iota:\mathscr{E}_{1}\simeq \mathscr{E}_{0}$ and that this isomorphism induces an isomorphism between the parabolic structures. Now, since $\mathscr{E}_{0}\simeq \mathscr{E}_{1}$ and both,  $(\mathscr{E}_{0},\tau_{0},\underline{q}^{0})$ and $(\mathscr{E}_{1},\tau_{1},\underline{q}^{1})$, descend to $(\mathscr{F},\tau)$, we deduce that the diagram 
$$
\xymatrix{
\underline{\textrm{Spec}}(S^{\bullet}(V\otimes \mathscr{E}_{0})) \eq[rr] & & \underline{\textrm{Spec}}(S^{\bullet}(V\otimes \mathscr{E}_{1}))\\
 &Y\ar[ul]_{\tau_{0}}\ar[ur]^{\tau_{1}}&
}
$$
commutes when is restricted to $W:=Y\setminus \pi^{-1}(\textrm{Sing}(X))$. Since $\tau_{0}$ and $\tau_{1}$ are separated morphisms, we finally deduce that the diagram commutes and, therefore, $\iota:\mathscr{E}_{0}\rightarrow \mathscr{E}_{1}$ induces an isomorphism of singular principal $G$-bundles with generalized parabolic structures. This shows injectivity.
\end{enumerate}
\end{proof}

Let $r\in\mathbb{N}$, $d\in\mathbb{Z}$, $\delta\in\mathbb{Z}_{>0}$ and define $J(r):=\{\underline{e}=(e_1,\hdots,e_{\nu})\in\mathbb{N}^{\nu}| e_i\leq r\}$. For each $\underline{e}\in J(r)$ fix $\epsilon=\epsilon(\underline{e})$ and $\underline{\kappa}$ as in Proposition \ref{propimp2}. Let $\textrm{SPB}(\rho)_{r,d}^{\delta\textrm{-(s)s}}$ be the moduli space of $\delta$-(semi)stable singular principal $G$-bundles of rank $r$ and degree $d$ on the nodal curve $X$ (see \cite{AMC}).
Then Proposition \ref{propimp1} and Proposition \ref{propimp2} imply that, for each $\underline{e}\in J(r)$, there is a well defined functor
$\textbf{D}(\rho)_{r,d(\underline{e},r),\underline{e}}^{(\underline{\kappa},\delta)\textrm{-(s)s}}\rightarrow \textbf{SPB}(\rho)_{r,d}^{\delta-(s)s}$, where $d(\underline{e},r)=d-\sum_{i=1}^{\nu}(r-e_{i})$,
and thus a proper morphism
\begin{equation}\label{finalfinal}
\Theta: \textrm{D}(\rho)_{r,d}^{(\underline{\kappa},\delta)\textrm{-(s)s}}:=  \coprod_{\underline{e}\in J(r)}\textrm{D}(\rho)_{r,d(\underline{e},r),\underline{e}}^{(\underline{\kappa},\delta)\textrm{-(s)s}}\longrightarrow \textrm{SPB}(\rho)_{r,d}^{\delta\textrm{-(s)s}}
\end{equation}
between the moduli spaces. Let $\underline{e}\in J(r)$ and let $\textrm{SPB}(\rho)_{r,d,\underline{e}}^{\delta\textrm{-(s)s}}$ be the subscheme that parametrizes singular principal $G$-bundles, $(\mathscr{F},\tau)$, with $\mathscr{F}$ a torsion free sheaf of type $\underline{e}$.
Then, by Corollary \ref{length}, $\Theta$ induces a proper morphism
$$\Theta_{\underline{e}}:   \textrm{D}(\rho)_{r,d(\underline{e},r),\underline{e}}^{(\underline{\kappa},\delta)\textrm{-(s)s}}  \longrightarrow \bigcup_{\underline{e}'\leq \underline{e}}\textrm{SPB}(\rho)_{r,d,\underline{e}'}^{\delta\textrm{-(s)s}}.$$
Let us denote by $\overline{\textrm{SPB}(\rho)_{r,d,\underline{e}}^{\delta\textrm{-s}}}$ the schematic closure in $\textrm{SPB}(\rho)_{r,d}^{\delta\textrm{-(s)s}}$, which lies in the closed subscheme $\bigcup_{\underline{e}'\leq \underline{e}}\textrm{SPB}(\rho)_{r,d,\underline{e}'}^{\delta\textrm{-(s)s}}$. Obviously $\Theta_{\underline{e}}$ maps $\overline{\Theta_{\underline{e}}^{-1}(\textrm{SPB}(\rho)_{r,d,\underline{e}}^{\delta\textrm{-s}})}$  to $\overline{\textrm{SPB}(\rho)_{r,d,\underline{e}}^{\delta\textrm{-s}}}$.

\begin{theorem}\label{equivmoduli}
If the open subscheme $\emph{D}(\rho)_{r,d(\underline{e},r),\underline{e}}^{(\underline{\kappa},\delta)\emph{-s}}\subset \emph{D}(\rho)_{r,d(\underline{e},r),\underline{e}}^{(\underline{\kappa},\delta)\emph{-ss}}$ is dense, then
$\Theta_{\underline{e}}$ induces a birational, proper and surjective morphism
$\Theta_{\underline{e}}: \emph{D}(\rho)_{r,d(\underline{e},r),\underline{e}}^{(\underline{\kappa},\delta)\emph{-ss}} \longrightarrow \overline{\emph{SPB}(\rho)_{r,d,\underline{e}}^{\delta\emph{-s}}}$.
\end{theorem}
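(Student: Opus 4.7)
The plan is to identify a dense open subscheme $\mathcal{W}_{\underline{e}}\subset \textrm{D}(\rho)_{r,d(\underline{e},r),\underline{e}}^{(\underline{\kappa},\delta)\textrm{-s}}$ on which $\Theta_{\underline{e}}$ restricts to an isomorphism onto $\textrm{SPB}(\rho)_{r,d,\underline{e}}^{\delta\textrm{-s}}$, and then to combine this with the density hypothesis and with the already-established properness of $\Theta_{\underline{e}}$ to derive the four assertions of the theorem. Motivated by Remark \ref{funtorial-des2}, I take $\mathcal{W}_{\underline{e}}$ to be the locus where $\textrm{dim}(C_1^i)+\textrm{dim}(C_2^i)=2(r-e_i)$ for every $i$. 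Since $K_i$ has fixed dimension $2r-e_i$ inside the $2r$-dimensional fiber $\mathscr{E}(y_1^i)\oplus\mathscr{E}(y_2^i)$ and $C_j^i$ is its intersection with an $r$-dimensional summand, upper-semicontinuity yields $\textrm{dim}(C_j^i)\geq r-e_i$ pointwise, with equality forming a Zariski open condition. Hence $\mathcal{W}_{\underline{e}}$ is open, and Theorem \ref{equivsets} together with Remark \ref{funtorial-des2} identifies its closed points bijectively with those of $\textrm{SPB}(\rho)_{r,d,\underline{e}}^{\delta\textrm{-s}}$ via $\Theta_{\underline{e}}$.

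To promote this bijection to a scheme-theoretic isomorphism I would construct the inverse functorially. Given a family $(\mathscr{F}_S,\tau_S)$ of $\delta$-stable singular principal $G$-bundles of constant type $\underline{e}$ parametrized by $S$, set $\mathscr{E}_S:=\pi_S^*\mathscr{F}_S/T_S$ with $T_S$ the relative torsion along the nodal locus; the constancy of the local model $\mathscr{F}_{S,x_i}\simeq\mathscr{O}_{X,x_i}^{e_i}\oplus\mathfrak{m}_{x_i}^{r-e_i}$ should force $\mathscr{E}_S$ to be $S$-flat and fiberwise locally free of rank $r$, reproducing in families the fiber-wise computation of the proof of Theorem \ref{equivsets}. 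The adjunction $\pi_S^*\pi_{S*}\mathscr{E}_S\to\mathscr{E}_S$ then supplies the descent datum $\tau_S$, Proposition \ref{canonical-bundle} applied in families produces parabolic quotients $\underline{q}_S$ of type $\underline{e}$, and Propositions \ref{propimp1} and \ref{propimp2} promote $(\mathscr{E}_S,\underline{q}_S,\tau_S)$ to a family of $(\underline{\kappa},\delta)$-stable descending bundles lying in $\mathcal{W}_{\underline{e}}$. The coarse-moduli universal property then yields a morphism $\textrm{SPB}(\rho)_{r,d,\underline{e}}^{\delta\textrm{-s}}\to\mathcal{W}_{\underline{e}}$ which by Theorem \ref{equivsets} inverts $\Theta_{\underline{e}}|_{\mathcal{W}_{\underline{e}}}$ on closed points and is therefore its scheme-theoretic inverse.

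With the isomorphism $\mathcal{W}_{\underline{e}}\simeq\textrm{SPB}(\rho)_{r,d,\underline{e}}^{\delta\textrm{-s}}$ in hand, I would argue density of $\mathcal{W}_{\underline{e}}$ in $\textrm{D}(\rho)_{r,d(\underline{e},r),\underline{e}}^{(\underline{\kappa},\delta)\textrm{-ss}}$ in two steps: the hypothesis reduces density to the stable locus, and openness of $\mathcal{W}_{\underline{e}}$ further reduces this to non-emptiness in each irreducible component, which is delivered by the inverse morphism together with a dimension comparison. Granted density, the four conclusions fall out simultaneously: properness of $\Theta_{\underline{e}}$ makes its image closed, whence $\Theta_{\underline{e}}(\textrm{D}(\rho)_{r,d(\underline{e},r),\underline{e}}^{(\underline{\kappa},\delta)\textrm{-ss}})=\Theta_{\underline{e}}(\overline{\mathcal{W}_{\underline{e}}})\subseteq\overline{\Theta_{\underline{e}}(\mathcal{W}_{\underline{e}})}=\overline{\textrm{SPB}(\rho)_{r,d,\underline{e}}^{\delta\textrm{-s}}}$ gives the factorization through the closure, while the reverse inclusion follows because the closed image already contains $\textrm{SPB}(\rho)_{r,d,\underline{e}}^{\delta\textrm{-s}}$. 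Surjectivity and properness onto the closure are then automatic, and birationality is furnished by the isomorphism on $\mathcal{W}_{\underline{e}}$.

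The main obstacle will be the relative construction $\mathscr{E}_S=\pi_S^*\mathscr{F}_S/T_S$ in the second paragraph: one has to define the torsion subsheaf so that its formation commutes with base change and verify that the quotient is fiberwise locally free of rank $r$ and $S$-flat, which uses the constancy of the local type $\underline{e}$ in an essential way together with the fact that outside the nodal locus $\pi_S^*\mathscr{F}_S$ is already torsion-free. A secondary but related delicate point is showing that $\mathcal{W}_{\underline{e}}$ meets every irreducible component of the stable locus -- equivalently, that no component is filled exclusively with descending bundles whose kernel dimensions strictly exceed the generic value -- which is again controlled by the inverse morphism once it is set up.
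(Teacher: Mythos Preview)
Your approach is essentially the same as the paper's: both single out the open locus $\mathcal{W}_{\underline{e}}$ defined by the condition $\dim(C_1^i)+\dim(C_2^i)=2(r-e_i)$, identify (via Theorem~\ref{equivsets}, Remark~\ref{funtorial-des2}, and Propositions~\ref{propimp1}--\ref{propimp2}) the restriction of $\Theta_{\underline{e}}$ to $\mathcal{W}_{\underline{e}}\cap \textrm{D}(\rho)_{r,d(\underline{e},r),\underline{e}}^{(\underline{\kappa},\delta)\textrm{-s}}$ with an isomorphism onto $\textrm{SPB}(\rho)_{r,d,\underline{e}}^{\delta\textrm{-s}}$, and then use density together with properness to conclude. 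The paper is considerably terser---it simply asserts the density of $\mathcal{W}_{\underline{e}}$ and the scheme-theoretic isomorphism without the functorial inverse you sketch---so your extra care with the relative construction $\mathscr{E}_S=\pi_S^*\mathscr{F}_S/T_S$ and the component-by-component density argument goes beyond what the paper records, but the skeleton is the same.
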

\begin{proof}
From Proposition \ref{propimp2} and Theorem \ref{equivsets} it follows that $\Theta_{\underline{e}}$ induces an isomorphism $\Theta_{\underline{e}}^{-1}(\textrm{SPB}(\rho)_{r,d,\underline{e}}^{\delta\textrm{-s}})\simeq \textrm{SPB}(\rho)_{r,d,\underline{e}}^{\delta\textrm{-s}}$. Let us denote by $\mathcal{W}_{\underline{e}}$ the dense open subscheme of $\textrm{D}(\rho)_{r,d(\underline{e},r),\underline{e}}^{(\underline{\kappa},\delta)\textrm{-(s)s}}$ parametrizing descending principal bundles with generalized parabolic structure such that $\textrm{dim}(C_{1}^{i})+\textrm{dim}(C_{2}^{i})=2(r-e_{i})$ for $i=1,\hdots,\nu$ (see Remark \ref{funtorial-des}). From Proposition \ref{propimp1} and Remark \ref{funtorial-des2} it follows that $\Theta_{\underline{e}}^{-1}(\textrm{SPB}(\rho)_{r,d,\underline{e}}^{\delta\textrm{-s}})=\mathcal{W}_{\underline{e}}\cap \textrm{D}(\rho)_{r,d(\underline{e},r),\underline{e}}^{(\underline{\kappa},\delta)\textrm{-s}}$. Therefore, it is a dense open subscheme. Finally, Since $\Theta_{\underline{e}}$ is proper, the isomorphism $\Theta_{\underline{e}}^{-1}(\textrm{SPB}(\rho)_{r,d,\underline{e}}^{\delta\textrm{-s}})\simeq \textrm{SPB}(\rho)_{r,d,\underline{e}}^{\delta\textrm{-s}}$ extends to a surjective and proper morphism $\Theta_{\underline{e}}: \textrm{D}(\rho)_{r,d(\underline{e},r),\underline{e}}^{(\underline{\kappa},\delta)\textrm{-(s)s}} \longrightarrow \overline{\textrm{SPB}(\rho)_{r,d,\underline{e}}^{\delta\textrm{-s}}}$.
\end{proof}


\begin{thebibliography}{0}

\bibitem{Lange} D. Avritzer, H. Lange, and F. A. Ribeiro, {\it Torsion-free sheaves on nodal curves and triples}, Bulletin of the Brazilian Mathematical Society \textbf{41} (2010), no. 3, 421--447.

\bibitem{genparbund} U. Bhosle, {\it Generalized parabolic bundles and applications to torsion free sheaves on nodal curves}, Arkiv f\"ur Matematik \textbf{30} (1992), no. 1, 187--215.

\bibitem{bosle-multiple} U. Bhosle, {\it Vector bundles on curves with many components}, Proc. London Math. Soc. \textbf{3} (1999), no. 79,  81--106.

\bibitem{bosle} U. Bhosle, {\it Tensor fields and singular principal bundles}, International Mathematics Research Notices  \textbf{2004} (2004), no. 57, 3057--3077.

\bibitem{hitchinpairs} U. Bhosle, {\it Hitchin pairs on reducible curves}, International Journal of Mathematics \textbf{29} (2018) no. 3, p. 49.

\bibitem{gies} D. Gieseker, {\it On the moduli of vector bundles on an algebraic surface}, Annals of Mathematics
Second Series \textbf{106} (1977), no. 1, 45--60.

\bibitem{higgs} A. Lo Giudice and A. Pustetto, {\it A compactification of the moduli space of principal Higgs bundles over singular curves} Journal of Geometry and Physics \textbf{110} (2016), 328--342.

\bibitem{sols} T. G\'omez and I. Sols, {\it Stable tensors and moduli space of orthogonal sheaves}, Arxiv: math.AG/0103150 (2001).

\bibitem{Groth1} A. Grothendieck, and J. Dieudonn\'e, {\it \'El\'ements de g\'eom\'etrie alg\'ebrique IV: \'Etude locale des sch\'emas et des morphismes de sch\'emas, Seconde partie}, Vol. 24, Publications Math\'ematiques de l'Institut des Hautes \'Etudes Scientifiques, Paris, 1961.

\bibitem{FGA} A. Grothendieck, {\it Fondements de la g\'eom\'etrie alg\'ebrique [Extraits du Seminaire Bourbaki, 1957--1962.]}, Secretariat math\'ematique, Paris, 1962.




\bibitem{Huyb} D. Huybrechts and M. Lehn, {\it The Geometry of Moduli Spaces of Sheaves (Second Edition)}, Cambridge University Press, Cambridge, 2010.

\bibitem{ms} V. B. Mehta  and  C. S. Seshadri, {\it Moduli of vector bundles on curves with parabolic structures}, Mathematische Annalen \textbf{248} (1980), 205--239.

\bibitem{AMC}  A. L. Mu\~noz Casta\~neda, {\it On the moduli spaces of singular principal bundles on stable curves}, Adv. in Geometry (to appear), arXiv:1806.09741, 15 pp.

\bibitem{mum-git} D. Mumford, J. F. Fogarty and  F. Kirwan, {\it Geometric Invariant Theory}, Vol. 34, Springer-Verlag Berlin, Heidelberg, 1994. 

\bibitem{Alexander2} A. Schmitt,  {\it Singular principal bundles over higher-dimensional manifolds and their moduli spaces}, International Mathematics Research Notices \textbf{2002} (2002), no. 23, 1183--1210.

\bibitem{Alexander3} A. Schmitt, {\it Global boundedness for decorated sheaves}, International Mathematics Research Notices \textbf{68} (2004), 3637--3671.

\bibitem{Alexander4} A. Schmitt, {\it Moduli spaces for semistable honest singular principal $G$-bundles on a nodal curve which are compatible with degenerations-A remark on Bhosle's paper ``Tensor fields and singular principal bundles"}, IMRN \textbf{2005} (2005) , no. 23, 1427--1436.

\bibitem{Alexander1} A. Schmitt, {\it Singular principal $G$-bundles on nodal curves}, Journal of the European Mathematical Society \textbf{7} (2005), no. 2, 215--251.

\bibitem{Asch} A. Schmitt, {\it Geometric Invariant Theory and Decorated Principal Bundles}, European Mathematical Society, Z\"urich, 2008.

\bibitem{Seshadri} C. S. Seshadri, {\it Fibr\'es Vectoriels sur les courbes alg\'ebrique}, Ast\'erisque, 1982.


\end{thebibliography}
\end{document}